\documentclass{amsart}

\makeatletter
\def\subsection{\@startsection{subsection}{2}
  \z@{.5\linespacing\@plus.7\linespacing}{.5\linespacing}
  {\normalfont\bfseries}}
\makeatother

\makeatletter
\def\@defaultbiblabelstyle#1{[#1]}
\makeatother

\makeatletter
\def\@setauthors{
  \begingroup
  \def\thanks{\protect\thanks@warning}
  \trivlist
  \centering\footnotesize \@topsep30\p@\relax
  \advance\@topsep by -\baselineskip
  \item\relax
  \author@andify\authors
  \def\\{\protect\linebreak}
  \authors
  \ifx\@empty\contribs
  \else
    ,\penalty-3 \space \@setcontribs
    \@closetoccontribs
  \fi
  \endtrivlist
  \endgroup
}
\def\@settitle{\begin{center}
  \baselineskip14\p@\relax
    \bfseries
  \@title
  \end{center}
}
\makeatother

\usepackage{amssymb}
\usepackage{amsxtra}
\usepackage{mathtools}
\usepackage[colorlinks=true, citecolor=blue]{hyperref}
\usepackage{upref}
\usepackage{soul}
\usepackage{enumitem}
\setlist[enumerate]{font=\normalfont}
\usepackage{verbatim}

\newcommand{\dd}{\mathrm{d}}

\newtheorem{theorem}{Theorem}[section]
\newtheorem{lemma}[theorem]{Lemma}
\newtheorem{proposition}[theorem]{Proposition}
\newtheorem{corollary}[theorem]{Corollary}

\theoremstyle{definition}
\newtheorem{definition}[theorem]{Definition}

\theoremstyle{remark}
\newtheorem{remark}[theorem]{Remark}

\numberwithin{equation}{section}
\allowdisplaybreaks[4]

\usepackage{xcolor}
\definecolor{zsxpurple}{RGB}{128,0,128}

\definecolor{zswgreen}{RGB}{0,128,0}

\definecolor{chybrown}{RGB}{128,0,0}

\begin{document}

\title{Solomon--Terao Algebras: Top Degree, Dimension, and Homology}

\author{Hanyu Cai}
\address{School of Mathematical Sciences, Peking University, Yiheyuan Rd. 5, Haidian, Beijing 100871, China}
\email{2300010801@stu.pku.edu.cn}

\author{Haiyang Chen}
\address{School of Mathematical Sciences, Nankai University, 94 Weijin Road, Nankai District, Tianjin 300071, China}
\email{hai@mail.nankai.edu.cn}

\author{Shixuan Zeng}
\address{School of the Gifted Young, University of Science and Technology of China, No. 96 Jinzhai Road, Baohe District, Hefei, Anhui Province 230026, China}
\email{zsx0515@mail.ustc.edu.cn}

\author{Shanwei Zhang}
\address{Qiuzhen College, Tsinghua University, Beijing 100084, China}
\email{zsw26@mails.tsinghua.edu.cn}

\subjclass[2020]{Primary 32S22; Secondary 13E10, 14N20}

\keywords{Hyperplane arrangements, Solomon--Terao algebra, Solomon--Terao complex, logarithmic derivation modules, tame arrangements}

\begin{abstract}
We study the Solomon--Terao algebra and the Solomon--Terao complex
of a central hyperplane arrangement $\mathcal A$ in $\mathbb C^\ell$.
Let $\eta$ be a homogeneous polynomial of degree $d\geq2$ whose
restriction to every flat of $\mathcal A$ has no critical point away from the origin.
We prove that the Solomon--Terao algebra $\mathrm{ST}(\mathcal A,\eta)$ has top degree
$|\mathcal A|+\ell(d-2)$ and a one-dimensional top graded component
with an explicit generator, solving a conjecture of Abe, Maeno,
Murai, and Numata.
For $d=2$, we show that the total dimension of $\mathrm{ST}(\mathcal A,\eta)$ is bounded below by
that of the Orlik--Solomon algebra.
Both results hold without assuming that the arrangement
$\mathcal A$ is tame.
Finally, we prove that $\mathcal A$ is tame if and only if
all positive-degree homology groups of the Solomon--Terao complex
associated with $\eta$ vanish.
\end{abstract}

\maketitle

\setcounter{tocdepth}{2}

\section{Introduction}
\label{sec:introduction}

The Solomon--Terao algebra and the Solomon--Terao complex relate the logarithmic derivations of hyperplane arrangements to their combinatorial and geometric invariants. The Solomon--Terao complex underlies the Solomon--Terao formula for the characteristic polynomial of an arrangement in terms of the Hilbert series of its logarithmic multiderivation modules \cite{SolomonTerao1987}, while Solomon--Terao algebras associated with certain arrangements realize coinvariant algebras of Weyl groups and cohomology rings of regular nilpotent Hessenberg varieties \cite[Section~1.2]{abe2019solomon}.

Abe, Maeno, Murai, and Numata formally introduced the Solomon--Terao algebra as the zeroth homology of the Solomon--Terao complex and proved that it is a complete intersection precisely when the arrangement is free \cite[Definition~1.6 and Theorem~1.10]{abe2019solomon}. For tame arrangements, its Hilbert series for suitable quadratic polynomials is given by a specialization of the Solomon--Terao polynomial \cite[Theorem~1.8]{abe2019solomon}. Tameness has thus played an important role in the study of these algebras beyond the free case. In this paper, we establish some basic results on their top degree and total dimension for general arrangements,
and show that the vanishing of positive-degree homology of the
Solomon--Terao complex characterizes tameness.

Throughout this introduction, let $\mathcal A$ be a central hyperplane arrangement in $\mathbb C^\ell$. We refer to Section~\ref{sec-pre} for the relevant definitions and notation. Abe, Maeno, Murai, and Numata conjectured that, for every $d\geq2$ and $\eta\in U_d(\mathcal A)$, the Solomon--Terao algebra $\mathrm{ST}(\mathcal A,\eta)$ has top degree $|\mathcal A|+\ell(d-2)$ and a one-dimensional top graded component \cite[Conjecture~5.12]{abe2019solomon}. Abe proved this conjecture for tame arrangements when $d=2$ and for essential tame arrangements when $d\geq2$ by regularity methods \cite[Theorems~1.7, 3.2, and~3.7]{abe2025solomon}. Our first theorem proves the conjecture for all $d\geq2$ without assuming tameness and explicitly identifies the top graded component.

\begin{theorem}\label{thm:main1}
For every $d\geq2$ and $\eta\in U_d(\mathcal A)$, the Solomon--Terao algebra $\mathrm{ST}(\mathcal A,\eta)$ has top degree $\tau:=|\mathcal A|+\ell(d-2)$, and its top graded component is one-dimensional with
\[
    \mathrm{ST}(\mathcal A,\eta)_\tau
    =\mathbb C\cdot[Q(\mathcal A)\det(\eta_{ij})],
\]
where $\eta_{ij}:=\partial_{x_i}\partial_{x_j}\eta$
for $1\le i,j\le\ell$.
\end{theorem}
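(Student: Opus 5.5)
The plan is to compare $I:=(\theta(\eta)\mid\theta\in D(\mathcal A))\subseteq S$, so that $\mathrm{ST}(\mathcal A,\eta)=S/I$, with the Jacobian ideal $J:=(\eta_1,\dots,\eta_\ell)$, where $\eta_i=\partial_{x_i}\eta$. Since $\eta\in U_d(\mathcal A)$ has no critical point on $\mathbb C^\ell\setminus\{0\}$ (the flat $\mathbb C^\ell$ itself), the $\eta_i$ form a regular sequence of degree $d-1$. Hence $S/J$ is a complete intersection with socle degree $\ell(d-2)$, and its socle is spanned by $[\det(\eta_{ij})]$; this is the classical Hessian/residue fact. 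Two containments tie $I$ to $J$:
\[
Q(\mathcal A)\,J\subseteq I\subseteq J .
\]
The first holds because $Q\,\partial_{x_i}\in D(\mathcal A)$. The second holds because $\theta(\eta)=\sum_i\theta(x_i)\eta_i$. Multiplication by $Q=Q(\mathcal A)$ therefore induces a degree-$|\mathcal A|$ map $S/J\to S/I$, which sends the socle generator to $[Q\det(\eta_{ij})]$ in degree exactly $\tau$.

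\textbf{Nonvanishing.} First I would show $Q\det(\eta_{ij})\notin I$. The tool is the Grothendieck residue functional $\lambda(f)=\operatorname{Res}_0\bigl[f\,dx/(\eta_1\cdots\eta_\ell)\bigr]$, which is nonzero on $\det(\eta_{ij})$ and vanishes on $J$. If $Q\det(\eta_{ij})=\sum_k g_k\theta_k(\eta)$, I would try to obtain a contradiction by dividing by $Q$. Concretely, I would show that each class $g_k\theta_k(x_i)/Q$ pairs to zero under a suitable localized residue
\[
\lambda_Q(f)=\operatorname{Res}\bigl[f\,dx/(Q\eta_1\cdots\eta_\ell)\bigr],
\]
taken along the hyperplanes, using the logarithmic condition $\theta(\alpha_H)\in\alpha_H S$. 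The conditions $\eta\in U_d$ on all flats $X$ should make this residue well defined: the restriction $\eta|_X$ is nondegenerate, so the polar loci of the $\eta_i$ restricted to $X$ are controlled. The goal is to produce a functional $\mu$ on $S_\tau$ with $\mu(I_\tau)=0$ and $\mu(Q\det(\eta_{ij}))\neq0$.

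\textbf{Vanishing above $\tau$ and one-dimensionality.} Next I would show that $S_k\subseteq I$ for $k>\tau$ and $S_\tau\subseteq I+\mathbb C\,Q\det(\eta_{ij})$. I would argue by induction on $|\mathcal A|$ and on $\ell$, using localizations $\mathcal A_X$ at flats, which are the only place the hypothesis on all flats enters. Dimension-counting from the Solomon--Terao complex gives $\dim\mathrm{ST}<\infty$, and combining this with local duality should identify $\mathrm{Hom}(\mathbb C,S/I)$ in degree $\tau$ with a quotient of $\mathrm{Ext}^\ell(S/I,S)$ in degree $-\tau-\ell$. The latter is dual to the degree-$(\tau-|\mathcal A|)=\ell(d-2)$ part of the socle of $S/J$, which is one-dimensional by the first paragraph. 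The key step is to show that any $f$ of degree $\ge\tau$ is, modulo $I$, a multiple of $Q$ times an element of $J$-socle degree. For this I would use a deletion--restriction exact sequence $0\to D(\mathcal A')\xrightarrow{\cdot\alpha_H}D(\mathcal A)\to D(\mathcal A'')$ type relation on the ideals $I$. Inductively this should give $I(\mathcal A')\alpha_H+\dots\subseteq I(\mathcal A)$ and shift the top degree by one per added hyperplane, starting from the empty arrangement, where $I=J$ and the top degree is $\ell(d-2)$.

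\textbf{Main obstacle.} The hardest step is the upper bound, namely that nothing survives in degrees $>\tau$ and only one class survives in degree $\tau$, without tameness. Abe's regularity method needs the projective dimension of $D(\mathcal A)$ to be small, and this is exactly what we cannot assume. I would try first to make the deletion step work purely at the level of the ideals $I$, where $\alpha_H\,D(\mathcal A')\subseteq D(\mathcal A)$ always holds, rather than at the level of module resolutions. The nondegeneracy of $\eta|_H$ should be used to show that the cokernel contributes only in degrees $\le\tau$.
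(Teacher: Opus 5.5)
\textbf{Verdict: genuine gap.} Your reduction is sound as far as it goes. The containments $Q(\mathcal A)J\subseteq\mathfrak a(\mathcal A,\eta)\subseteq J$ hold. The theorem is equivalent to the induced map $(S/J)_{u-|\mathcal A|}\xrightarrow{\cdot Q(\mathcal A)}\mathrm{ST}(\mathcal A,\eta)_u$ being an isomorphism for every $u\ge\tau$, which is exactly the comparison with the empty arrangement $\Phi_V$ that the paper makes at the end. But neither half of this isomorphism is actually established.

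For injectivity in degree $\tau$, the expression $\operatorname{Res}[f\,dx/(Q\eta_1\cdots\eta_\ell)]$ is not a defined residue symbol. A Grothendieck residue at $0$ needs $\ell$ denominators forming a system of parameters, and ``along the hyperplanes'' does not specify one. The claim that it annihilates $\mathfrak a(\mathcal A,\eta)_\tau$ is stated as a goal, not derived.

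For surjectivity, the deletion--restriction induction fails exactly where you suspect. From $\alpha_H\mathfrak a(\mathcal A',\eta)\subseteq\mathfrak a(\mathcal A,\eta)$ you do control the submodule $\alpha_H\,\mathrm{ST}(\mathcal A,\eta)$: it is a quotient of $\mathrm{ST}(\mathcal A',\eta)(-1)$, so by induction its end is at most $\tau$. The quotient $S/(\mathfrak a(\mathcal A,\eta)+\alpha_HS)$, however, is $S_H$ modulo the ideal generated by the $\theta''(\eta|_H)$ with $\theta''$ in the image of the restriction map $D^1(\mathcal A)\to D^1(\mathcal A'')$. That map is not surjective in general. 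So this quotient surjects onto $\mathrm{ST}(\mathcal A'',\eta|_H)$ rather than being a quotient of it, and the induction hypothesis bounds it from below, not from above. The local-duality paragraph only rephrases the claim; it produces no map that could be shown to be an isomorphism.

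The missing idea is that the regularity bound itself needs no tameness, contrary to your diagnosis. The paper uses $\operatorname{reg}D^p(\mathcal A^{\mathrm{ess}})\le|\mathcal A|-r+p$ for $p<r$ \cite[Theorem~1.10]{abe2025solomon} for arbitrary arrangements. What is needed is a way to transfer this bound to $\mathrm{ST}(\mathcal A,\eta)$ using only that the Solomon--Terao homology is finite-dimensional. The paper does this as follows.
\begin{enumerate}
\item Take the \v Cech double complex of the shifted complex $K^{-p}=D^p(\mathcal A)(-p(d-1))$. Since every $H_p(D^*(\mathcal A),\eta)$ is finite-dimensional for $\eta\in U_d(\mathcal A)$, one spectral sequence collapses and gives $H^0(\operatorname{Tot})\cong\mathrm{ST}(\mathcal A,\eta)$.
\item In the other spectral sequence, the regularity bounds, the product decomposition of $D^p(\mathcal A)$, and the K\"unneth formula kill every relevant $E_1$-term in internal degree $u\ge\tau$ except $H^\ell_{S_+}(D^\ell(\mathcal A))_{u-\ell(d-1)}$. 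That term is one-dimensional for $u=\tau$ and zero for $u>\tau$.
\item The construction is functorial under the chain map $\cdot Q(\mathcal A)\colon D^\bullet(\Phi_V)\to D^\bullet(\mathcal A)$, and $D^\ell(\mathcal A)=Q(\mathcal A)\operatorname{Der}^\ell S$. Together these give your comparison isomorphism in all degrees $u\ge\tau$ at once.
\end{enumerate}
Consequently the nonvanishing of $[Q(\mathcal A)\det(\eta_{ij})]$ follows directly from that of $[\det(\eta_{ij})]$ in $S/J$, with no residue computation needed.
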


The nonvanishing of this class was previously known for generic $\eta\in U_d(\mathcal A)$ \cite[Theorem~5.13]{abe2019solomon}. In particular, when $d=2$, the top degree is $|\mathcal A|$ and the top component is spanned by $[Q(\mathcal A)]$.

For tame arrangements, the total dimension of $\mathrm{ST}(\mathcal A,\eta)$ is known to equal that of the Orlik--Solomon algebra $\mathrm{OS}(\mathcal A)$ for every $\eta\in U_2(\mathcal A)$ \cite[Theorem~1.8]{abe2019solomon}. It is well-known that the latter is isomorphic to the cohomology ring of the arrangement complement \cite{OrlikSolomon1980}. Our second theorem extends this equality to a lower bound when $\mathcal A$ is not necessarily tame.

\begin{theorem}\label{thm:main2}
For every $\eta\in U_2(\mathcal A)$, we have
\[
    \dim_{\mathbb C}\mathrm{ST}(\mathcal A,\eta)\geq \dim_{\mathbb C}\mathrm{OS}(\mathcal A).
\]
\end{theorem}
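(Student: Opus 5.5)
The plan is to compare $\mathrm{ST}(\mathcal A,\eta)$ with the whole Solomon--Terao complex $(\Omega^\bullet,\partial_\eta)$ attached to $\eta$, whose zeroth homology is $\mathrm{ST}(\mathcal A,\eta)$ by definition. The Solomon--Terao formula gives the graded Euler characteristic of this complex: $\sum_p(-1)^p\,\mathrm{Hilb}(\Omega^p;t)$ is a specialization of the Solomon--Terao polynomial. For $d=2$ (all generators of $\eta$-contraction of degree $0$ shift), evaluating at $t\to1$ yields $\pi(\mathcal A,1)=\dim_{\mathbb C}\mathrm{OS}(\mathcal A)$. This is the mechanism behind the tame equality \cite[Theorem~1.8]{abe2019solomon}. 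I would keep this identity but drop the vanishing of higher homology, which only holds for tame arrangements.

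\textbf{Step 1: finite length.} First I would show that every homology module $H_p$ of the complex has finite length, so that $\sum_p(-1)^p\dim H_p$ makes sense. I would argue by localization at a prime $\mathfrak p\neq\mathfrak m$. Localizing replaces $\mathcal A$ by its localization $\mathcal A_X$ at a flat $X$, and since $\eta\in U_2(\mathcal A)$ there is no critical point of $\eta|_X$ off the origin. So after a linear change of coordinates the localized complex splits as a tensor product with a Koszul-type complex on a regular sequence in the directions of $X$. That Koszul factor is exact, and hence the localized complex is exact. Granting this, additivity of length over the (finite) graded pieces gives
\[
  \sum_{p\ge0}(-1)^p\dim_{\mathbb C}H_p=\dim_{\mathbb C}\mathrm{OS}(\mathcal A).
\]
Theorem~\ref{thm:main1} (top degree $|\mathcal A|$) bounds the degrees in which $H_0$ lives, so only finitely many graded pieces are involved.

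\textbf{Step 2: the sign of the correction, which is the main obstacle.} It remains to prove $\sum_{p\ge1}(-1)^{p+1}\dim H_p\ge0$, i.e.\ that the higher homology contributes nonpositively to $\dim H_0-\chi$. My first attempt would be a deformation argument. I would fix homogeneous generators of the modules $\Omega^p$ and regard the differentials $\partial_\eta$ as matrices depending linearly on $\eta$. By Step 1 the relevant graded pieces are finite in number, so in each degree $k$ the complex is a finite complex of vector spaces of fixed dimensions whose Euler characteristic does not depend on $\eta$. The dimension of $(H_0)_k=\operatorname{coker}(\partial_{\eta})_k$ is then upper semicontinuous in $\eta$.

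The idea is to degenerate the complex via a one-parameter family in which the upper terms become exact. I would try the filtration by the number of hyperplanes, comparing $\mathcal A$ with the restriction $\mathcal A''$ to a hyperplane $H$. Flats of $\mathcal A''$ are flats of $\mathcal A$, so $\eta|_H\in U_2(\mathcal A'')$. Using the standard exact sequences $0\to\Omega^p(\mathcal A')\to\Omega^p(\mathcal A)\xrightarrow{\mathrm{res}}\Omega^{p-1}(\mathcal A'')$ and the recursion $\dim\mathrm{OS}(\mathcal A)=\dim\mathrm{OS}(\mathcal A')+\dim\mathrm{OS}(\mathcal A'')$, I would aim for a surjection-type inequality $\dim\mathrm{ST}(\mathcal A,\eta)\ge\dim\mathrm{ST}(\mathcal A'',\eta|_H)+(\text{contribution from }\mathcal A')$, and then induct.

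The delicate point is that flats of the deletion $\mathcal A'$ need not be flats of $\mathcal A$, so $\eta$ may fail to lie in $U_2(\mathcal A')$. If that obstacle cannot be bypassed, I would fall back on a degree-by-degree rank argument. Here the right-exactness of the residue map at the level of $H_0$ gives the needed inequality, while the cokernel failure of $\mathrm{res}$ is absorbed into higher homology with the correct sign.
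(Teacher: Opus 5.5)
\textbf{The proposal has a genuine gap at Step~2.}

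Your Step~1 is fine, but it is only a reformulation. Finiteness of the $H_p$ is Proposition~\ref{prop:ST-finite-homology}, and Proposition~\ref{prop-coho} at $x=1$ together with Theorem~\ref{thm:ST-formula} is exactly \eqref{eq:intro-dimension-excess}. So the theorem is \emph{equivalent} to the sign statement in Step~2, and neither route you sketch supplies it.

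For the deformation route, semicontinuity runs the wrong way. Since $\partial_1$ depends linearly on $\eta$, each $\dim_{\mathbb C}\mathrm{ST}(\mathcal A,\eta)_k$ is upper semicontinuous in $\eta$. Degenerating to a better-behaved special member $\eta_0$ can therefore only bound nearby $\eta$ from \emph{above}. Moreover, no $\eta_0\in U_2(\mathcal A)$ with exact upper terms exists unless $\mathcal A$ is tame (Theorem~\ref{thm:main3}), and the tame case is already Corollary~\ref{coro-ST=OS}.

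For deletion--restriction, your stated worry is unfounded. Since $L(\mathcal A')\subseteq L(\mathcal A)$, we get $\eta\in U_2(\mathcal A')$ automatically. The real obstruction is elsewhere. Put $\mathfrak a:=\mathfrak a(\mathcal A,\eta)$. The surjection $\mathrm{ST}(\mathcal A,\eta)\to\mathrm{ST}(\mathcal A'',\eta|_H)$ induced by restricting derivations has kernel containing $(\alpha_HS+\mathfrak a)/\mathfrak a$. Up to shift, this is $S/(\mathfrak a:\alpha_H)$. Although $\mathfrak a(\mathcal A',\eta)\subseteq(\mathfrak a:\alpha_H)$, the inclusion can be strict, so this piece is only a \emph{quotient} of $\mathrm{ST}(\mathcal A',\eta)$. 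An inductive lower bound for $\dim_{\mathbb C}\mathrm{ST}(\mathcal A',\eta)$ therefore does not transfer. Saying the defect is ``absorbed into higher homology with the correct sign'' restates the claim rather than proving it.

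\textbf{The missing idea} is a lower bound coming from an honest point count. This requires deforming in the opposite direction: pass from $\mathfrak a$ to an inhomogeneous ideal whose degree form ideal contains $\mathfrak a$, so that length can only drop as you move away from the homogeneous side.

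After reducing to an essential arrangement, the paper fixes a logarithmic form $\omega=\sum_H\lambda_H\,\dd\alpha_H/\alpha_H$ with $\sum_H\lambda_H=2$. It sets $\mathfrak a_1:=(\theta(\eta)-\omega(\theta):\theta\in D^1(\mathcal A))$. For homogeneous $\theta$ one has $\mathrm{DF}(\theta(\eta)-\omega(\theta))=\theta(\eta)$. Hence $\mathfrak a\subseteq\mathrm{DF}(\mathfrak a_1)$ and
\[
\dim_{\mathbb C}\mathrm{ST}(\mathcal A,\eta)\ \ge\ \dim_{\mathbb C}S/\mathrm{DF}(\mathfrak a_1)\ =\ \dim_{\mathbb C}S/\mathfrak a_1 .
\]
Every zero of $\omega|_{Y_\eta}$ on $Y_\eta=\{\eta=1\}\cap U(\mathcal A)$ is a zero of $\mathfrak a_1$; the Euler field forces $\omega_y=(\dd\eta)_y$ there. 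For suitable weights, Huh's theorem gives exactly $(-1)^{r-1}\chi(Y_\eta)$ such zeros. M\"obius inversion identifies this number with $\dim_{\mathbb C}\mathrm{OS}(\mathcal A)$.

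Nothing in your proposal plays the role of this geometric count. Without it, or some other positivity input, the Euler-characteristic bookkeeping of Step~1 cannot determine the sign of the alternating sum of higher homology.
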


Thus the total Betti number of the arrangement complement provides a uniform lower bound for $\dim_{\mathbb C}\mathrm{ST}(\mathcal A,\eta)$ for every $\eta\in U_2(\mathcal A)$\footnote{We note that parallel to our approach, a different proof of Theorem~\ref{thm:main2} will be given in \cite{Xiping}.}. When $\mathcal A$ is the complexification of a real arrangement, the right-hand side is also the number of real chambers \cite{Zaslavsky1975}. The inequality can be strict, while equality does not characterize tameness, as illustrated by Remarks~\ref{rem:ST-OS-strict} and~\ref{rem:ST-OS-equality-nontame}. For essential arrangements, similar arguments also yield
\[
    \dim_{\mathbb C}\mathrm{ST}(\mathcal A,\eta)\geq (-1)^\ell\chi_{\mathcal A}(1-d)
\]
for every $d\geq2$ and $\eta\in U_d(\mathcal A)$, as stated in Remark~\ref{rem:essential-higher-degree}.

To distinguish equality of total dimensions from the vanishing property that implies it, we return to the Solomon--Terao complex. Tameness implies that the positive-degree homology of the Solomon--Terao complex vanishes (known as ST-finiteness) \cite[Proposition~2.12]{abe2019solomon}. 
Our third theorem establishes the converse, answering the ST-finiteness part of \cite[Question~5.10]{abe2019solomon}.

\begin{theorem}\label{thm:main3}
Let $d\geq2$ and $\eta\in U_d(\mathcal A)$. Then
\[
    \mathcal A\text{ is tame}\quad\Longleftrightarrow\quad H_p(D^*(\mathcal A),\eta)=0\quad\text{for all }p>0.
\]
\end{theorem}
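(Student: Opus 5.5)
The plan is to prove the forward implication by quoting the literature and to spend the effort on the converse, using local cohomology at the irrelevant ideal $\mathfrak m=(x_1,\dots,x_\ell)$ together with induction on $\ell$. Recall the Solomon--Terao complex
\[
0\to D^\ell(\mathcal A)\to D^{\ell-1}(\mathcal A)\to\cdots\to D^1(\mathcal A)\to D^0(\mathcal A)=S,
\]
whose differential is contraction with $d\eta$. The implication ``tame $\Rightarrow$ $H_p=0$ for $p>0$'' is \cite[Proposition~2.12]{abe2019solomon}, so only the converse needs proof.

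\textbf{Step 1: reduce to a statement at $\mathfrak m$.} First I will rephrase tameness, i.e.\ $\operatorname{pd}_S\Omega^p(\mathcal A)\le p$ for all $p$, as a depth or local-cohomology condition on the modules $D^p(\mathcal A)$. This should use the duality $D^p\cong(\Omega^p)^*$ between reflexive modules together with local duality. The expected form is a vanishing range $H^i_{\mathfrak m}(D^p)=0$ for $i$ below an explicit bound depending on $\ell-p$. I will check this reformulation carefully against the definitions in Section~\ref{sec-pre}.

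Tameness localizes. For a homogeneous prime $\mathfrak p\neq\mathfrak m$, the localized complex is the Solomon--Terao complex of the localization $\mathcal A_X$ at a flat $X$, base-changed along the remaining directions. Because $\eta\in U_d(\mathcal A)$ has no critical points on flats away from the origin, the restriction of $\eta$ to the relevant data again satisfies the $U_d$ condition. Off the origin, $d\eta$ has a nonvanishing component transverse to $X$. Contraction with this component should make the localized complex split-exact, or reduce it to a complex for a smaller arrangement.

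By induction on $\ell$, the hypothesis $H_p=0$ for $p>0$ then makes $\mathcal A_X$ tame for every $X\neq0$. Hence all the modules $D^p$ satisfy the tameness depth conditions on the punctured spectrum. It therefore remains to control $H^i_{\mathfrak m}(D^p)$ in the range $i<\ell$, which are the groups where failure of tameness can now be seen.

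\textbf{Step 2: hypercohomology spectral sequence.} I will apply $R\Gamma_{\mathfrak m}$ to the Solomon--Terao complex $D^\bullet$ and compare the two spectral sequences of the resulting double complex.

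Under the hypothesis, $D^\bullet$ is a resolution of $\mathrm{ST}(\mathcal A,\eta)$. This module is finite-dimensional, being Artinian by the $U_d$ condition, so the hypercohomology is concentrated in a single spot: $H^0_{\mathfrak m}(\mathrm{ST})=\mathrm{ST}$ sitting in total degree $0$.

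The other spectral sequence has first page $E_1^{p,i}=H^i_{\mathfrak m}(D^p)$. Its extreme corners are known:
\begin{itemize}
\item $D^\ell$ is free, so only $i=\ell$ contributes in that column;
\item $D^0=S$ also contributes only at $i=\ell$;
\item every $D^p$ is reflexive, so $H^0_{\mathfrak m}(D^p)=H^1_{\mathfrak m}(D^p)=0$.
\end{itemize}
I will argue by descending induction on $p$ that any nonzero $H^i_{\mathfrak m}(D^p)$ in the forbidden range would survive to $E_\infty$ in a total degree different from $0$. That would contradict the concentration of the hypercohomology. Equivalently, one can phrase this through the short exact sequences $0\to B_p\to D^p\to B_{p-1}\to0$, where $Z_p=B_p$ by exactness, and do depth chasing in both directions.

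\textbf{Main obstacle.} The hard step is ruling out cancellations between differentials $d_r$ in the spectral sequence. A naive depth chase is circular: bounding $\operatorname{depth}D^p$ requires a bound on $\operatorname{depth}B_{p-1}$, and vice versa. To break the circle I will use the grading. The differential raises polynomial degree by $d-1$, so the Hilbert series of local cohomology can be compared. I will also exploit the fact that the positions where $H^i_{\mathfrak m}(D^p)$ may be nonzero lie along lines that no differential can connect to the surviving term; here the bounds from Step 1 on $\mathfrak m$-torsion-free behaviour are used.

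If this degree argument fails, my fallback is to deform $\eta$ within $U_d(\mathcal A)$ to a generic member. The hope is that the homology of the complex is upper semicontinuous in $\eta$ along this deformation. Then genericity results, such as those underlying \cite[Theorem~5.13]{abe2019solomon}, can be combined with Theorem~\ref{thm:main1} to pin down the top-degree contributions.
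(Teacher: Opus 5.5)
The forward implication is fine. The converse has a genuine gap in both steps.

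\textbf{Step 1.} Your induction on $\ell$ has nothing to feed on. By Proposition~\ref{prop:ST-finite-homology}, $H_p(D^*(\mathcal A),\eta)$ is finite-dimensional for \emph{every} $\eta\in U_d(\mathcal A)$. So the localized complex at any prime $\mathfrak p\neq\mathfrak m$ is exact whether or not $\mathcal A$ is ST-finite. It is even contractible: some $\theta\in D^1(\mathcal A)$ has $\theta(\eta)\notin\mathfrak p$, and $\theta\wedge(-)/\theta(\eta)$ is a homotopy. To invoke the induction hypothesis for $\mathcal A_X$, you would need the \emph{graded} vanishing $H_p(D^*(\mathcal A_X),\eta')=0$ for some $\eta'\in U_d(\mathcal A_X)$. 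Nothing you say derives this from the vanishing for $\mathcal A$. Tameness of the localizations is true, but only as a consequence of the theorem.

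\textbf{Step 2.} More seriously, the cancellation problem you flag cannot be resolved from the features you actually use. With the paper's definition, tameness says $H^i_{\mathfrak m}(D^p(\mathcal A))=0$ for $i<p$, by Auslander--Buchsbaum. Now take $S=\mathbb C[x_1,\dots,x_4]$ and the Koszul complex on $\partial_{x_1}\eta,\dots,\partial_{x_4}\eta$. Add a suitably shifted contractible summand $N\xrightarrow{\mathrm{id}}N$ in homological degrees $3$ and $2$, where $N$ is the second syzygy of $S/\mathfrak m$. The result has the following properties:
\begin{enumerate}
\item it is graded with differentials of degree $d-1$, free end terms, and reflexive terms that are locally free off $\mathfrak m$;
\item it is exact in positive degrees with finite-length $H_0$;
\item its degree-$3$ term has depth $2<3$.
\end{enumerate}
In your spectral sequence, the two copies of $H^2_{\mathfrak m}(N)$ sit in total degrees $-1$ and $0$ and cancel under $d_1$. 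No degree bookkeeping forbids this.

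\textbf{Missing idea.} So an input specific to the Solomon--Terao differential is indispensable. The paper uses the factorization $\partial=\sum_i z_i\delta_i$ with $z_i=\partial_{x_i}\eta$. Here $\delta_i$ is contraction with $x_i$; it preserves $D^\bullet(\mathcal A)$ and anticommutes with $\partial$ (Lemma~\ref{decomppartial}). The paper then tensors with the Koszul resolution $K_\bullet$ of the finite-length module $R_\eta=S/(z_1,\dots,z_\ell)$. On the total complex, the nilpotent operator $E$ built from $e_i\wedge(-)$ and $\delta_i$ satisfies $[\dd_0,E]=\dd_1$ and $[\dd_1,E]=0$, so $\exp(E)$ conjugates $\dd_0+\dd_1$ to $\dd_0$. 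Under ST-finiteness this gives
\[
\mathrm{Tor}_n^S(R_\eta,\mathrm{ST}(\mathcal A,\eta))\cong\bigoplus_{p+q=n}\mathrm{Tor}_q^S(R_\eta,D^p(\mathcal A)).
\]
The left side vanishes for $n>\ell$. Since Tor against a nonzero finite-length module detects projective dimension, this yields $\operatorname{pd}_S D^p(\mathcal A)\le\ell-p$ directly, with no localization or induction. In your counterexample above this splitting fails precisely because the identity differential does not factor through $(z_1,\dots,z_\ell)$.

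Your fallback does not help either. Theorem~\ref{thm:main1} concerns only the top graded piece of $H_0$ and says nothing about the depths of the $D^p(\mathcal A)$.
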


In particular, vanishing for one $d\geq2$ and one $\eta\in U_d(\mathcal A)$ implies vanishing for all $d\geq2$ and $\eta\in U_d(\mathcal A)$. Thus ST-finiteness, for these choices of $\eta$, depends only on the arrangement.

The relation between Theorems~\ref{thm:main2} and~\ref{thm:main3} can be made explicit by the Euler characteristic of the complex. For $\eta\in U_2(\mathcal A)$, the Solomon--Terao formula and the homological identity in \cite[Proposition~3.1]{abe2019solomon} give
\begin{equation}\label{eq:intro-dimension-excess}
    \dim_{\mathbb C}\mathrm{ST}(\mathcal A,\eta)-\dim_{\mathbb C}\mathrm{OS}(\mathcal A)=\sum_{p=1}^{\ell}(-1)^{p-1}\dim_{\mathbb C}H_p(D^*(\mathcal A),\eta).
\end{equation}
Theorem~\ref{thm:main2} asserts that this alternating sum in \eqref{eq:intro-dimension-excess} is always nonnegative, whereas Theorem~\ref{thm:main3} characterizes the simultaneous vanishing of all its terms. The example in Remark~\ref{rem:ST-OS-equality-nontame} indicates that nonzero homology contributions can cancel, so equality of dimensions is strictly weaker than ST-finiteness.

The proofs of our three main results use three different mechanisms. For Theorem~\ref{thm:main1}, we combine Abe's Castelnuovo--Mumford regularity bound \cite[Theorem~1.10]{abe2025solomon} with a local-cohomology spectral sequence to isolate the top graded component. Comparison with the empty arrangement identifies its generator. For Theorem~\ref{thm:main2}, after reducing to the essential case, we apply Huh's theorem for smooth very affine varieties \cite[Theorem~1]{huh2013maximum} to the hypersurface $\{\eta=1\}$ in the arrangement complement. A computation of its Euler characteristic identifies $\dim_{\mathbb C}\mathrm{OS}(\mathcal A)$ with the number of zeros of a generic logarithmic $1$-form on this hypersurface. We then compare this count with $\dim_{\mathbb C}\mathrm{ST}(\mathcal A,\eta)$ through an auxiliary ideal and a comparison of highest-degree forms. For Theorem~\ref{thm:main3}, we tensor the Solomon--Terao complex with the Koszul complex on the partial derivatives of $\eta$. 
Under the vanishing assumption, a chain isomorphism yields a direct-sum decomposition of Tor groups. Nonvanishing of Tor against a nonzero finite-length module then detects the projective dimensions of the modules $D^p(\mathcal A)$ and hence establishes tameness.

\subsection*{Acknowledgments}
We are grateful to Yongqiang Liu and Xiping Zhang for valuable discussions. This research was initiated during the PKU Algebraic Combinatorics Experience (PACE 2026) at Beijing International Center for Mathematical Research, Peking University. We are deeply grateful to our mentors Yibo Gao and Tao Gui for their guidance, insightful discussions, and helpful observations. We also thank the organizers of PACE for providing this research opportunity. This work was partially done while the authors were visiting the Shanghai Institute for Mathematics and Interdisciplinary Sciences (SIMIS). We thank Dingxin Zhang for the hospitality.

\subsection*{AI disclosure}
Generative AI was used interactively in the mathematical exploration, including developing, testing, and refining the proofs, and as a technical assistant for checking calculations and arguments, locating relevant references, improving the exposition and identifying mathematical and typographical errors. All AI-assisted content was carefully verified by the authors, who take full responsibility for the manuscript.

\section{Preliminaries} \label{sec-pre}

\subsection{Hyperplane arrangements and Orlik--Solomon algebras}
\label{subsec:arrangements-OS}

We recall some basic definitions and properties of
hyperplane arrangements and Orlik--Solomon algebras,
following \cite{orlik1992arrangements,stanley2007introduction}.

Let $V=\mathbb C^\ell$.
A \emph{hyperplane arrangement} $\mathcal A$ in $V$
is a finite set of hyperplanes. It is called \emph{central}
if all its hyperplanes contain the origin.
Throughout this paper, all hyperplane arrangements are assumed
to be central.
The empty arrangement in $V$ is denoted by $\Phi_V$,
and the \emph{complement} of $\mathcal A$ is
\[
    U(\mathcal A):=V\setminus\bigcup_{H\in\mathcal A}H.
\]

\begin{definition}
The \emph{intersection lattice} $L(\mathcal A)$ consists
of all intersections of hyperplanes in $\mathcal A$,
called \emph{flats}, ordered by reverse inclusion.
Explicitly,
\[
    L(\mathcal A)
    :=
    \left\{
        \bigcap_{H\in\mathcal B}H
        \;\middle|\;
        \mathcal B\subseteq\mathcal A
    \right\},
\]
where the empty intersection is $V$.
\end{definition}

The \emph{center} and \emph{rank} of $\mathcal A$ are
\[
    T:=T(\mathcal A)=\bigcap_{H\in\mathcal A}H,
    \qquad
    r:=r(\mathcal A)=\operatorname{codim}_V T.
\]
The hyperplane arrangement $\mathcal A$ is called
\emph{essential} if $T=\{0\}$, or equivalently, if $r=\ell$.
Its \emph{essentialization} is
$\mathcal A^{\mathrm{ess}}:=\{H/T\mid H\in\mathcal A\}$
in $V/T$. Choosing a decomposition $V=U\oplus T$,
we identify $\mathcal A^{\mathrm{ess}}$ with
$\{H\cap U\mid H\in\mathcal A\}$ in $U$.

For hyperplane arrangements $\mathcal A_i$ in $V_i$ ($i=1,2$),
their \emph{product} is
\[
    \mathcal A_1\times\mathcal A_2
    :=
    \{H_1\oplus V_2\mid H_1\in\mathcal A_1\}
    \cup
    \{V_1\oplus H_2\mid H_2\in\mathcal A_2\}.
\]
Since $H=(H\cap U)\oplus T$ for every $H\in\mathcal A$,
we have 
\(\mathcal A=\mathcal A^{\mathrm{ess}}\times\Phi_T\).

\begin{definition}
The \emph{Möbius function}
$\mu:L(\mathcal A)\to\mathbb Z$ is defined recursively by
\[
    \mu(V)=1,\qquad
    \mu(X)=-\sum_{\substack{Y\in L(\mathcal A)\\X\subsetneq Y}}
    \mu(Y)
    \quad (X\neq V).
\]
The \emph{characteristic polynomial} and
\emph{Poincar\'e polynomial} of $\mathcal A$ are
defined by
\begin{align}
    \chi_{\mathcal A}(t)
    &:=\sum_{X\in L(\mathcal A)}
    \mu(X)t^{\dim X},\notag\\
    \pi_{\mathcal A}(t)
    &:=\sum_{X\in L(\mathcal A)}
    \mu(X)(-t)^{\operatorname{codim}_V X}.
    \label{eq-Poin}
\end{align}
\end{definition}

These polynomials satisfy
$\pi_{\mathcal A}(t)=(-t)^\ell\chi_{\mathcal A}(-t^{-1})$.
Moreover, $\chi_{\mathcal A}(1)=0$ whenever $r>0$,
by the defining recurrence for $\mu$.

\vspace{5pt}

Fix a total order on $\mathcal A$, and let
\[
    E(\mathcal A):=
    \bigwedge_{\mathbb C}
    \left(\bigoplus_{H\in\mathcal A}\mathbb C e_H\right).
\]
For $\mathcal B=\{H_1<\cdots<H_p\}\subseteq\mathcal A$,
write $e_{\mathcal B}:=e_{H_1}\wedge\cdots\wedge e_{H_p}$,
with $e_\varnothing=1$.
Define the $\mathbb C$-linear boundary operator
$\partial_{\mathrm{OS}}:E(\mathcal A)\to E(\mathcal A)$ by
\[
    \partial_{\mathrm{OS}}e_{\mathcal B}
    :=\sum_{i=1}^p(-1)^{i-1}
    e_{\mathcal B\setminus\{H_i\}},
    \qquad \partial_{\mathrm{OS}}1=0.
\]
A subset $\mathcal B\subseteq\mathcal A$ is
\emph{dependent} if
\[
    \operatorname{codim}_V
    \bigcap_{H\in\mathcal B}H<|\mathcal B|.
\]

\begin{definition}
Let $I(\mathcal A)$ be the ideal of $E(\mathcal A)$
generated by $\partial_{\mathrm{OS}}e_{\mathcal B}$
for all dependent subsets $\mathcal B\subseteq\mathcal A$.
The \emph{Orlik--Solomon algebra} of $\mathcal A$ is
\[
    \mathrm{OS}(\mathcal A):=E(\mathcal A)/I(\mathcal A).
\]
\end{definition}

The Orlik--Solomon algebra is naturally graded by
$\deg e_H=1$ for each $H\in\mathcal A$.
For a graded $\mathbb C$-vector space
$M=\bigoplus_{k\in\mathbb Z}M_k$
that is bounded below and has finite-dimensional
graded components, its \emph{Hilbert series} is
\[
    \operatorname{Hilb}(M;x)
    :=\sum_{k\in\mathbb Z}
    (\dim_{\mathbb C}M_k)x^k.
\]

\begin{theorem}[{\cite[Theorem~5.2]{OrlikSolomon1980}}]
Let $\mathcal A$ be a complex hyperplane arrangement.
There is an isomorphism of graded $\mathbb C$-algebras
\(\mathrm{OS}(\mathcal A) \cong H^*(U(\mathcal A);\mathbb C)\).
Moreover,
\begin{equation}\label{eq:OS-Hilbert-Poincare}
    \operatorname{Hilb}(\mathrm{OS}(\mathcal A);t)
    =\pi_{\mathcal A}(t).
\end{equation}
\end{theorem}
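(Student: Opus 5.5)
The statement is the classical Orlik--Solomon theorem, quoted from \cite{OrlikSolomon1980}. The plan is to recall the standard proof rather than rebuild it. It has two parts: the algebra isomorphism with cohomology, and the Hilbert series identity \eqref{eq:OS-Hilbert-Poincare}.

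For the isomorphism, choose a linear form $\alpha_H$ with $H=\ker\alpha_H$ for each $H\in\mathcal A$, and send $e_H\mapsto \frac{1}{2\pi i}\,\dd\log\alpha_H$, a closed holomorphic $1$-form on $U(\mathcal A)$. This gives a map $E(\mathcal A)\to H^*(U(\mathcal A);\mathbb C)$, and we check that $I(\mathcal A)$ lies in the kernel. If $\mathcal B$ is dependent, the forms $\alpha_H$ with $H\in\mathcal B$ are linearly dependent, so $\bigwedge_{H\in\mathcal B}\dd\alpha_H=0$. Writing $\dd\log\alpha_H=\dd\alpha_H/\alpha_H$, the standard identity
\[
\partial_{\mathrm{OS}}\Bigl(\textstyle\bigwedge_{H\in\mathcal B}\omega_H\Bigr)\cdot\prod_{H\in\mathcal B}\alpha_H=\pm\iota_{E}\bigl(\textstyle\bigwedge_{H\in\mathcal B}\dd\alpha_H\bigr)
\]
holds, where $\iota_E$ is contraction with the Euler vector field; hence the image of $\partial_{\mathrm{OS}}e_{\mathcal B}$ vanishes already at the level of forms. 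This induces $\mathrm{OS}(\mathcal A)\to H^*(U(\mathcal A))$, and its image is the Brieskorn algebra.

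Bijectivity is proved by deletion--restriction. Fix $H_0\in\mathcal A$ and set $\mathcal A'=\mathcal A\setminus\{H_0\}$ and $\mathcal A''=\{H\cap H_0\}$. On the algebra side, one shows the sequence
\[
0\to\mathrm{OS}(\mathcal A')\to\mathrm{OS}(\mathcal A)\to\mathrm{OS}(\mathcal A'')[-1]\to0
\]
is exact. Injectivity on the left is the hard part; it uses the nbc (no-broken-circuit) basis to show that $\mathrm{OS}$ has a basis indexed by nbc sets. On the topological side, $U(\mathcal A'')$ is a closed smooth hypersurface of complex codimension one in $U(\mathcal A')$ with complement $U(\mathcal A)$. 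The Gysin/Thom long exact sequence
\[
\cdots\to H^k(U(\mathcal A'))\to H^k(U(\mathcal A))\xrightarrow{\mathrm{res}} H^{k-1}(U(\mathcal A''))\to H^{k+1}(U(\mathcal A'))\to\cdots
\]
then splits into short exact sequences. The splitting holds because the residue map is surjective: it is hit by $\omega_{H_0}\wedge(\text{lifts})$, using the Brieskorn generators for $\mathcal A''$ by induction. The two sequences are compatible with the maps $e_H\mapsto\omega_H$ (the residue of $\omega_{H_0}\wedge\eta$ is the restriction of $\eta$). Induction on $|\mathcal A|$ plus the five lemma then gives the isomorphism. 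The base case is the empty arrangement, where both sides are $\mathbb C$ in degree $0$.

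For the Hilbert series, the same short exact sequence gives
\[
\operatorname{Hilb}(\mathrm{OS}(\mathcal A);t)=\operatorname{Hilb}(\mathrm{OS}(\mathcal A');t)+t\operatorname{Hilb}(\mathrm{OS}(\mathcal A'');t).
\]
The Poincar\'e polynomial satisfies the same recursion, $\pi_{\mathcal A}(t)=\pi_{\mathcal A'}(t)+t\pi_{\mathcal A''}(t)$, which follows from Möbius-function deletion--restriction. Both equal $1$ for the empty arrangement, so the identity follows by induction. Alternatively, the nbc basis gives $\sum_X|\mu(X)|t^{\operatorname{codim}X}$ directly, via Whitney's theorem and $\mu(X)=(-1)^{\operatorname{codim}X}|\mu(X)|$. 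The main obstacle is left exactness on the algebra side: the injectivity of $\mathrm{OS}(\mathcal A')\to\mathrm{OS}(\mathcal A)$, or equivalently the linear independence of the nbc monomials. I would handle this with a standard-monomial/Gröbner-type argument in the exterior algebra, showing that nbc monomials span and then counting dimensions against the topological side.
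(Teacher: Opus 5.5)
The paper gives no proof of its own here; it quotes \cite[Theorem~5.2]{OrlikSolomon1980}. Your outline is a correct sketch of the standard deletion--restriction proof, essentially as in \cite{orlik1992arrangements}: Brieskorn's logarithmic generators, the Gysin/residue sequence for the hypersurface $U(\mathcal A'')\subset U(\mathcal A')$ with complement $U(\mathcal A)$, induction with the five lemma, and the matching recursion $\pi_{\mathcal A}=\pi_{\mathcal A'}+t\,\pi_{\mathcal A''}$.

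One simplification: you can bypass the step you flag as the main obstacle. The diagram chase for injectivity of $\mathrm{OS}(\mathcal A)\to H^*(U(\mathcal A))$ needs only three things:
\begin{itemize}
\item exactness of the algebra sequence in the middle, which follows directly from the relations $\partial_{\mathrm{OS}}(e_{H_0}e_{\mathcal B})\in I(\mathcal A)$ whenever $\mathcal B\cup\{H_0\}$ is dependent;
\item the inductive hypothesis;
\item injectivity of $H^*(U(\mathcal A'))\to H^*(U(\mathcal A))$.
\end{itemize}
Injectivity of $\mathrm{OS}(\mathcal A')\to\mathrm{OS}(\mathcal A)$, and hence the nbc basis, then follows for free.
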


In particular,
\(\dim_{\mathbb C}\mathrm{OS}(\mathcal A)
=\pi_{\mathcal A}(1)=(-1)^\ell\chi_{\mathcal A}(-1)\).

\vspace{1pt}
\subsection{Solomon--Terao complex and Solomon--Terao algebra} 

We introduce some basic definitions about logarithmic derivation modules $D^p(\mathcal{A})$ for a hyperplane arrangement $\mathcal{A}$, the Solomon--Terao complex and the Solomon--Terao algebra, mainly following \cite{abe2019solomon} and \cite{SolomonTerao1987}.

For a graded ring $R$ and a graded module $M$, 
we write $R_d$ and $M_d$ for their homogeneous components of degree \(d\), respectively.
We use the convention for \emph{graded shift} that $M(a)_d=M_{a+d}$ for every $d\in\mathbb Z$.
The \emph{end} of $M$ is defined by
\[
    \operatorname{end}M 
    :=
    \sup\{d\in\mathbb Z\mid M_d\neq0\},
\]
with the convention $\operatorname{end}0=-\infty$.

\vspace{5pt}

For any complex vector space $W$,
write $S_W:=\operatorname{Sym}(W^*)$.
In particular, $S:=S_V=\mathbb C[x_1,\ldots,x_\ell]$.
The module of $\mathbb C$-derivations of $S$ is 
\[
    \operatorname{Der}S
    :=\bigoplus_{i=1}^{\ell}S\partial_{x_i},
\]
graded by $\deg x_i=1$ and $\deg\partial_{x_i}=0$.
For $1\le p\le\ell$, set
$\operatorname{Der}^pS:=\bigwedge_S^p\operatorname{Der}S$,
with $\operatorname{Der}^0S=S$.

For each $H\in\mathcal A$, fix
$\alpha_H\in V^*$ with $\ker\alpha_H=H$, and set
\[
    Q(\mathcal A):=\prod_{H\in\mathcal A}\alpha_H.
\]
For $1\le p\le\ell$, the module of
\emph{logarithmic $p$-derivations} of $\mathcal A$ is
\[
    D^p(\mathcal A)
    :=
    \left\{
        \theta\in\operatorname{Der}^pS
        \;\middle|\;
        \begin{array}{l}
        \theta(\alpha_H,f_2,\ldots,f_p)\in\alpha_HS\\
        \text{for all }H\in\mathcal A,\ 
        f_2,\ldots,f_p\in S
        \end{array}
    \right\}.
\]
Set $D^0(\mathcal A)=S$ and
$D^p(\mathcal A)=0$ for $p<0$ or $p>\ell$.
In particular,
\[
    D^1(\mathcal A)
    =\{\theta\in\operatorname{Der}S
      \mid \theta(\alpha_H)\in\alpha_HS
      \text{ for all }H\in\mathcal A\}.
\]
Note that $D^p(\mathcal A)\neq0$ for $0\le p\le\ell$, since 
$Q(\mathcal A)\operatorname{Der}^pS\subseteq D^p(\mathcal A)$.
In the top degree,
\begin{equation}\label{eq:top-derivation}
    D^\ell(\mathcal A)
    =Q(\mathcal A)\operatorname{Der}^\ell S
    \cong S(-|\mathcal A|).
\end{equation}

The following is an immediate consequence of
\cite[Proposition~5.8]{SolomonTerao1987}.

\begin{lemma}
\label{lem:product-log-derivations}
Let $V=U\oplus T$, where $T=T(\mathcal A)$,
and identify $\mathcal A^{\mathrm{ess}}$ with
a hyperplane arrangement in $U$.
Then, for $0\le p\le\ell$, there is an isomorphism
of graded $S$-modules
\[
    D^p(\mathcal A)
    \cong
    \bigoplus_{i+j=p}
    D^i(\mathcal A^{\mathrm{ess}})
    \otimes_{\mathbb C}
    \operatorname{Der}^j S_T.
\]
\end{lemma}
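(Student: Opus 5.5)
The plan is to choose coordinates adapted to the splitting $V=U\oplus T$ and prove the isomorphism directly, as a concrete instance of \cite[Proposition~5.8]{SolomonTerao1987} for products.

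\textbf{Setup.} Pick coordinates $x_1,\dots,x_r$ on $U$ and $x_{r+1},\dots,x_\ell$ on $T$. Then each $\alpha_H$ lies in $U^*$, i.e.\ it involves only $x_1,\dots,x_r$, and $S=S_U\otimes_{\mathbb C}S_T$. Write $D^i(\mathcal A^{\mathrm{ess}})\subseteq\operatorname{Der}^iS_U$. Since $\operatorname{Der}S=(\operatorname{Der}S_U\otimes S_T)\oplus(S_U\otimes\operatorname{Der}S_T)$, taking exterior powers gives
\[
\operatorname{Der}^pS\cong\bigoplus_{i+j=p}\operatorname{Der}^iS_U\otimes_{\mathbb C}\operatorname{Der}^jS_T .
\]
Here a basis element is $\partial_I\wedge\partial_J$ with $I\subseteq\{1,\dots,r\}$ and $J\subseteq\{r+1,\dots,\ell\}$.

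\textbf{Main step.} We show that $D^p(\mathcal A)$ is exactly the sum of the pieces $D^i(\mathcal A^{\mathrm{ess}})\otimes\operatorname{Der}^jS_T$. Write
\[
\theta=\sum_{J}\theta_J\wedge\partial_J,\qquad \theta_J\in\operatorname{Der}^{p-|J|}S_U\otimes S_T .
\]

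We first reduce the membership condition to the $U$-directions. The condition $\theta(\alpha_H,f_2,\dots,f_p)\in\alpha_HS$ is $S$-multilinear, so it suffices to test $f_k$ among the coordinates $x_1,\dots,x_\ell$. Since $\alpha_H$ involves only $x_1,\dots,x_r$, contracting $\theta$ with $\alpha_H$ and with coordinates $x_{j}$ for $j\in J$ isolates the component $\theta_J$. Hence the condition holds for $\theta$ if and only if, for each $J$, it holds for $\theta_J$ when the remaining arguments are taken among $x_1,\dots,x_r$.

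Next we handle the $S_T$-coefficients. Expand $\theta_J=\sum_m\theta_{J,m}\otimes m$ over a monomial basis $m$ of $S_T$, with $\theta_{J,m}\in\operatorname{Der}^{p-|J|}S_U$. Because $\alpha_H\in S_U$, we have $\alpha_HS=\alpha_HS_U\otimes S_T$. Membership in this ideal can therefore be read off coefficientwise in $m$. This shows that $\theta_J\in D^{p-|J|}(\mathcal A^{\mathrm{ess}})\otimes S_T$.

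Finally we assemble the decomposition. Tensoring over the $\partial_J$ gives
\[
D^p(\mathcal A)=\bigoplus_{i+j=p}D^i(\mathcal A^{\mathrm{ess}})\otimes_{\mathbb C}\operatorname{Der}^jS_T ,
\]
where $\operatorname{Der}^jS_T=\bigoplus_{|J|=j}S_T\partial_J$. All identifications preserve degrees because $\deg\partial=0$. They are also $S=S_U\otimes S_T$-linear.

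\textbf{Obstacle.} The main point to check is the contraction step. We need that testing with $\alpha_H$ together with $T$-coordinates imposes no further condition on the $\partial_J$-parts. This holds because $\partial_j(\alpha_H)=0$ for $j>r$, so every term containing a $T$-derivation evaluated on $\alpha_H$ vanishes. Alternatively, this is precisely the statement of \cite[Proposition~5.8]{SolomonTerao1987} for the product $\mathcal A=\mathcal A^{\mathrm{ess}}\times\Phi_T$, using $D^j(\Phi_T)=\operatorname{Der}^jS_T$.
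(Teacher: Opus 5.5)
Your argument is correct. It is more hands-on than the paper, which gives no proof beyond calling the lemma an immediate consequence of \cite[Proposition~5.8]{SolomonTerao1987}, applied to $\mathcal A=\mathcal A^{\mathrm{ess}}\times\Phi_T$ with $D^j(\Phi_T)=\operatorname{Der}^jS_T$. That is exactly the alternative you mention at the end. Your coordinate computation is a self-contained proof of this special case, and it has two ingredients:
\begin{itemize}
\item The block structure of $(\partial_I\wedge\partial_J)(\alpha_H,x_{k_2},\ldots,x_{k_p})$ separates the conditions by $J$: the $T$-derivations kill $\alpha_H$ and the $U$-coordinates, and the $U$-derivations kill the $T$-coordinates.
\item Freeness of $S$ over $S_U$, together with $\alpha_H\in S_U$, separates the conditions by $S_T$-monomials.
\end{itemize}
What your route buys is that the isomorphism is visibly an equality of submodules of $\operatorname{Der}^pS$ under the natural splitting. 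This is the form actually needed later in Lemma~\ref{lem:eta-essentialization}, where the derivations of the decomposition are applied to $\eta$ directly. The citation buys brevity and the general product formula for arbitrary $\mathcal A_1\times\mathcal A_2$.

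Two small wording points. First, the reduction to coordinate arguments rests on the chain rule
\[
\theta(\alpha_H,f_2,\ldots,f_p)=\sum\prod_k\partial_{x_{j_k}}(f_k)\,\theta(\alpha_H,x_{j_2},\ldots,x_{j_p}),
\]
that is, on $\theta$ being a multiderivation, not on $S$-multilinearity in the $f_k$. Second, for $|J|=p$ no condition arises at all, which matches $D^0(\mathcal A^{\mathrm{ess}})=S_U$.
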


\begin{definition}
Let $d\ge0$ and $\eta\in S_d$.
For $1\le p\le\ell$, define the $S$-linear map
$\partial_p:D^p(\mathcal A)\to D^{p-1}(\mathcal A)$ by
\[
    (\partial_p\theta)(f_2,\ldots,f_p)
    :=\theta(\eta,f_2,\ldots,f_p)
\]
for $\theta\in D^p(\mathcal A)$ and
$f_2,\ldots,f_p\in S$.
Set $\partial_0=0$.
By alternatingness, $\partial_{p-1}\circ\partial_p=0$,
giving the \emph{Solomon--Terao complex}
\[
    0\to D^\ell(\mathcal A)
    \xrightarrow{\partial_\ell}
    \cdots\xrightarrow{\partial_2}
    D^1(\mathcal A)
    \xrightarrow{\partial_1}S\to0.
\]
Each $\partial_p$ is homogeneous of degree $d-1$.
Its $p$-th homology is the graded $S$-module
\[
    H_p(D^*(\mathcal A),\eta)
    :=\ker\partial_p/\operatorname{im}\partial_{p+1}.
\]
The \emph{Solomon--Terao ideal} is
$\mathfrak a(\mathcal A,\eta):=\operatorname{im}\partial_1
=\{\theta(\eta)\mid\theta\in D^1(\mathcal A)\}$.
The \emph{Solomon--Terao algebra} is
\[
    \mathrm{ST}(\mathcal A,\eta)
    :=H_0(D^*(\mathcal A),\eta)
    =S/\mathfrak a(\mathcal A,\eta).
\]
\end{definition}

\vspace{5pt}

For $X\in L(\mathcal A)$ and $f\in S_X$,
define the \emph{Jacobian ideal} of $f$ by
\[
    \operatorname{Jac}_X(f)
    :=(\theta(f)\mid\theta\in\operatorname{Der}S_X).
\]
Following \cite[Definition~2.7]{abe2019solomon},
a homogeneous polynomial $f\in S_X$ is called
\emph{nondegenerate on $X$} if
$\operatorname{Jac}_X(f)$ has no zeros in
$X\setminus\{0\}$.
For $d>0$, define
\[
    U_d(\mathcal A)
    :=
    \left\{
        \eta\in S_d\;\middle|\;
        \eta|_X\text{ is nondegenerate on }X
        \text{ for all }X\in L(\mathcal A)
    \right\}.
\]
We call $\eta\in U_d(\mathcal A)$
\emph{$\mathcal A$-nondegenerate}.

\begin{proposition}[{\cite[Proposition~2.8(1)]{abe2019solomon}}]
\label{prop:ST-finite-homology}
For every $d>0$, the set $U_d(\mathcal A)$ is a
nonempty Zariski-open subset of $S_d$.
Moreover, for every $\eta\in U_d(\mathcal A)$,
the homology groups $H_p(D^*(\mathcal A),\eta)$
are finite-dimensional over $\mathbb C$
for all $0\le p\le\ell$.
\end{proposition}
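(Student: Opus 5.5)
We prove the two assertions separately: openness and nonemptiness by a standard elimination argument flat by flat, and finite-dimensionality by showing that the homology modules are supported only at the origin.

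\emph{Openness and nonemptiness of $U_d(\mathcal A)$.} Fix $X\in L(\mathcal A)$ with $X\neq\{0\}$; the condition is vacuous for $X=\{0\}$.

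First we show that the degenerate forms on $X$ form a closed set. Consider the incidence variety
\[
Z_X\subseteq \mathbb P(S_{X,d})\times\mathbb P(X)
\]
of pairs $([f],[v])$ such that all partial derivatives of $f$ (with respect to coordinates on $X$) vanish at $v$. It is Zariski closed. Since $\mathbb P(X)$ is proper, its image in $\mathbb P(S_{X,d})$ is closed. Hence the set of forms in $S_{X,d}$ that are degenerate on $X$ is a closed cone.

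The restriction map $S_d\to S_{X,d}$, $\eta\mapsto\eta|_X$, is linear and surjective. So the set of $\eta$ with $\eta|_X$ nondegenerate is a Zariski open subset $W_X$ of $S_d$. This set is nonempty: choose linear coordinates $y_1,\dots,y_k$ on $X$ and lift $y_1^d+\cdots+y_k^d$. Its Jacobian ideal $(y_1^{d-1},\dots,y_k^{d-1})$ vanishes only at $0$ (for $d=1$ one takes a nonzero linear form instead).

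There are only finitely many flats, and $S_d$ is irreducible. Therefore
\[
U_d(\mathcal A)=\bigcap_X W_X
\]
is a finite intersection of nonempty open subsets of an irreducible space, hence nonempty and open.

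\emph{Finite-dimensionality.} Each $H_p:=H_p(D^*(\mathcal A),\eta)$ is a finitely generated graded $S$-module, since $S$ is Noetherian and each $D^p(\mathcal A)\subseteq\operatorname{Der}^pS$ is finitely generated. So it suffices to show that $\operatorname{Supp}H_p\subseteq\{0\}$. Since localization is exact, it suffices to show that the complex localized at the maximal ideal of any point $v\in V\setminus\{0\}$ is exact.

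Let $X$ be the smallest flat containing $v$, and put $\mathcal A_X=\{H\in\mathcal A\mid H\supseteq X\}$. For $H\notin\mathcal A_X$, $\alpha_H$ is a unit near $v$. The defining conditions show that $D^p(\mathcal A)_v=D^p(\mathcal A_X)_v$, so the localized Solomon--Terao complex is that of $\mathcal A_X$.

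Now $\eta|_X$ is nondegenerate and $v\in X\setminus\{0\}$. Hence there is a constant derivation $\theta\in\operatorname{Der}S_X$, that is, a derivative along a direction in $X$, with $\theta(\eta)(v)\neq0$. Such a $\theta$ lies in $D^1(\mathcal A_X)$, because it kills every $\alpha_H$ with $H\supseteq X$. Then $u:=\theta(\eta)$ is a unit in $S_v$.

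Define
\[
h(\omega):=u^{-1}\,\theta\wedge\omega .
\]
A wedge of logarithmic derivations is logarithmic, which one checks from the defining condition by expanding $(\theta\wedge\omega)(\alpha_H,\dots)$. Hence $h$ maps $D^p(\mathcal A_X)_v$ to $D^{p+1}(\mathcal A_X)_v$.

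The differential $\partial$ is contraction with $d\eta$, which is an antiderivation. This gives
\[
\partial(\theta\wedge\omega)=\theta(\eta)\,\omega-\theta\wedge\partial\omega ,
\]
so $\partial h+h\partial=\mathrm{id}$. Thus the localized complex is contractible, hence exact. This includes degree $0$: there $\partial h=\mathrm{id}$ gives the surjectivity of $\partial_1$ after localization.

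Consequently every $H_p$ has finite length, i.e. it is finite-dimensional over $\mathbb C$.

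The main point requiring care is the localization identity $D^p(\mathcal A)_v=D^p(\mathcal A_X)_v$, together with the closure of logarithmic multiderivations under wedge product by $D^1$. Both are verified directly from the defining condition $\theta(\alpha_H,f_2,\dots)\in\alpha_H S$.
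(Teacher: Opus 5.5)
The paper does not prove this statement; it quotes it from \cite[Proposition~2.8(1)]{abe2019solomon}. So there is no internal argument to compare with, and your proof has to stand on its own. It does: it is correct.

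The incidence-variety argument, the lifted Fermat form, and the irreducibility of $S_d$ settle the first assertion.

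For the second assertion, the identity $D^p(\mathcal A)_v=D^p(\mathcal A_X)_v$ holds for three reasons:
\begin{itemize}
\item $D^p(\mathcal A)=\bigcap_H D^p(\{H\})$;
\item localization commutes with finite intersections of submodules;
\item $D^p(\{H\})_v=(\operatorname{Der}^pS)_v$ whenever $\alpha_H(v)\neq0$, since $\alpha_H\operatorname{Der}^pS\subseteq D^p(\{H\})$.
\end{itemize}
The choice $\theta=\partial_w$ with $w\in X$, the closure under wedge with $D^1$, and the identity $\partial(\theta\wedge\omega)+\theta\wedge\partial\omega=\theta(\eta)\omega$ all check out, including at $p=0$ and $p=\ell$.

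The only step you leave implicit is passing from closed points to all primes. Suppose $(H_p)_{\mathfrak m_v}=0$ for all $v\neq0$. Then $V(\operatorname{Ann}H_p)\subseteq\{0\}$, so the Nullstellensatz gives $\mathfrak m^N H_p=0$ for some $N$. A finitely generated module killed by $\mathfrak m^N$ is finite-dimensional.

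A marginally shorter route uses your identity globally. For every $\theta\in D^1(\mathcal A)$, the map $\omega\mapsto\theta\wedge\omega$ is a null-homotopy for multiplication by $\theta(\eta)$ on $D^*(\mathcal A)$. Hence $\mathfrak a(\mathcal A,\eta)$ annihilates every $H_p$. Now take $\theta=\bigl(\prod_{H\in\mathcal A\setminus\mathcal A_X}\alpha_H\bigr)\partial_w$, which lies in $D^1(\mathcal A)$ and satisfies $\theta(\eta)(v)\neq0$. So $\mathfrak a(\mathcal A,\eta)$ has no zeros off the origin, and $\mathrm{ST}(\mathcal A,\eta)$ is finite-dimensional. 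Each $H_p$ is a finitely generated $\mathrm{ST}(\mathcal A,\eta)$-module, so it is finite-dimensional as well. This version avoids the localization identity for $D^p$. It also records that all homology groups are modules over the Solomon--Terao algebra.
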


\subsection{Tame arrangements and Solomon--Terao polynomials}

An arrangement $\mathcal A$ is called \emph{tame} if
\[
    \operatorname{pd}_S D^p(\mathcal A)\le\ell-p
    \qquad (1\le p\le\ell),
\]
where $\operatorname{pd}_S$ denotes projective dimension over $S$.

\begin{proposition}[{\cite[Proposition~2.12]{abe2019solomon}}]
\label{prop-vanish}
If $\mathcal A$ is tame, then for every $d>0$ and
$\eta\in U_d(\mathcal A)$,
\begin{equation}\label{eq-ST finite}
    H_p(D^*(\mathcal A),\eta)=0
    \qquad (p>0).
\end{equation}
\end{proposition}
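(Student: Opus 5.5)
Since $\mathcal A$ is tame, we have $\operatorname{pd}_S D^p(\mathcal A)\le\ell-p$, and by the Auslander--Buchsbaum formula this gives $\operatorname{depth} D^p(\mathcal A)\ge p$ for every $p$. The plan is to compare the Solomon--Terao complex with a complex known to be exact, namely its localization away from the origin, and then use a depth-counting argument in the style of the Peskine--Szpiro acyclicity lemma.

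\emph{Step 1: exactness away from the origin.} Let $\mathfrak p\neq\mathfrak m$ be a homogeneous prime and let $x\in V\setminus\{0\}$ be a point of its zero locus. Near $x$ the arrangement is locally the localization $\mathcal A_X$ at the flat $X$, the smallest flat containing $x$, and $\mathcal A_X$ is a product of its essentialization with an empty arrangement on $X$. By Lemma~\ref{lem:product-log-derivations}, $D^*(\mathcal A)_{\mathfrak p}$ decomposes accordingly as a tensor product. Because $\eta\in U_d(\mathcal A)$, the restriction $\eta|_X$ has no critical point at $x\in X\setminus\{0\}$, so some derivation in $\operatorname{Der}S_X$ takes $\eta$ to a unit locally. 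The complex is then a Koszul-type complex in which contraction with $\eta$ admits a contracting homotopy (wedging with a derivation $\delta$ satisfying $\delta(\eta)=1$ locally). Hence $H_p(D^*(\mathcal A),\eta)_{\mathfrak p}=0$ for all $p$. This is essentially the content of Proposition~\ref{prop:ST-finite-homology}: all homology modules have finite length, i.e.\ they are supported only at $\mathfrak m$.

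\emph{Step 2: acyclicity lemma.} We have a complex $0\to D^\ell\to\cdots\to D^1\to D^0=S$ whose homology has finite length and whose terms satisfy $\operatorname{depth} D^p\ge p$. Proceed by descending induction from the left end. Set $Z_p=\ker\partial_p$ and $B_p=\operatorname{im}\partial_{p+1}$.
\begin{itemize}
\item At $p=\ell$: $H_\ell=\ker\partial_\ell\subseteq D^\ell$ has finite length, while $D^\ell\cong S(-|\mathcal A|)$ is torsion-free. Hence $H_\ell=0$.
\item Inductive step: assume exactness at all positions $>p$. Then $0\to D^\ell\to\cdots\to D^{p+1}\to B_p\to 0$ is exact. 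Chasing depth along the short exact sequences $0\to Z_{q}\to D^{q}\to B_{q-1}\to0$, and using $\operatorname{depth} D^q\ge q$ together with $Z_\ell=0$ and $Z_q=B_q$ for $q>p$, one obtains by the depth lemma $\operatorname{depth} B_p\ge 1$. More precisely, inductively $\operatorname{depth} B_{q-1}\ge \min(\operatorname{depth} D^q,\operatorname{depth} Z_q-1)\ge q-1$, which gives $\operatorname{depth} B_p\ge p\ge1$.
\item Now $H_p=Z_p/B_p$ has finite length, so $H^0_{\mathfrak m}(H_p)=H_p$. From $0\to B_p\to Z_p\to H_p\to0$ we get the exact sequence $H^0_{\mathfrak m}(Z_p)\to H_p\to H^1_{\mathfrak m}(B_p)$. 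Here $H^0_{\mathfrak m}(Z_p)=0$ because $Z_p\subseteq D^p$ and $\operatorname{depth}D^p\ge p\ge1$. Also $H^1_{\mathfrak m}(B_p)=0$ provided $\operatorname{depth} B_p\ge 2$.
\end{itemize}

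\emph{Step 3: the main obstacle.} The difficulty is that Step 2 only yields $\operatorname{depth} B_p\ge p$, which is not enough when $p=1$. The cleaner route is the standard Peskine--Szpiro argument: localize at $\mathfrak m$ and apply the acyclicity lemma, whose hypotheses are $\operatorname{depth} D^p\ge p$ and that each $H_p$ has depth $0$ or vanishes. Here every nonzero $H_p$ has finite length, hence depth $0$. Let $p\ge1$ be the largest index with $H_p\ne0$. The complex $0\to D^\ell\to\cdots\to D^{p+1}\to Z_p\to H_p\to0$ is then exact, and the depth estimate above gives $\operatorname{depth} B_p\ge p+1-1$ computed via $D^{p+1}$, so in fact $\operatorname{depth} B_p\ge p+1\ge2$ once the indices are tracked correctly. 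Consequently $H^1_{\mathfrak m}(B_p)=0$, which forces $H_p=0$, a contradiction. Therefore $H_p(D^*(\mathcal A),\eta)=0$ for all $p>0$.

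The index bookkeeping in this depth chase is the step I expect to require care. The decisive input is $\operatorname{depth} D^{q}\ge q$, which is exactly tameness.
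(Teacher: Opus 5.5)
Your argument is correct, and it is the standard proof behind the cited result; the paper itself gives no proof here, only the citation to \cite{abe2019solomon}. Tameness plus Auslander--Buchsbaum gives $\operatorname{depth}D^p(\mathcal A)\ge p$, and the Peskine--Szpiro acyclicity lemma then applies because every homology module has finite length by Proposition~\ref{prop:ST-finite-homology}. The one thing to fix is the index bookkeeping you flagged: since $Z_\ell=0$ gives $B_{\ell-1}\cong D^\ell$ of depth $\ell$, descending induction with the depth lemma yields $\operatorname{depth}B_{q-1}\ge\min(q,\operatorname{depth}B_q-1)\ge q$, hence $\operatorname{depth}B_p\ge p+1\ge 2$ directly, so the ``obstacle'' in Step~3 is only an artifact of the lossy bound $q-1$ in Step~2.
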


For $d>0$ and $\eta\in U_d(\mathcal A)$, we say that
$\mathcal A$ is \emph{ST-finite with respect to $\eta$}
if \eqref{eq-ST finite} holds.
This extends the terminology of
\cite[Definition~5.7(4)]{abe2019solomon}
to arrangements not assumed to be tame.

\begin{definition}
The \emph{Solomon--Terao bi-polynomial} is defined by
\begin{equation}\label{eq-ST bipoly}
    \Psi(\mathcal A;x,t)
    :=\sum_{p=0}^{\ell}
    \operatorname{Hilb}(D^p(\mathcal A);x)
    \bigl(t(x-1)-1\bigr)^p.
\end{equation}
We follow the normalization of
\cite[Definition~1.4]{abe2025solomon}.\footnote{If
$\widetilde\Psi$ denotes the polynomial in
\cite[Definition~1.2]{abe2019solomon}, then
$\widetilde\Psi(\mathcal A;x,t)
=t^\ell\Psi(\mathcal A;x,-t^{-1})$.
In particular, $\widetilde\Psi(\mathcal A;x,1)
=\Psi(\mathcal A;x,-1)$.}
\end{definition}

\begin{theorem}[{\cite[Theorem~1.2 and Proposition~5.3]{SolomonTerao1987}}]
\label{thm:ST-formula}
The rational function $\Psi(\mathcal A;x,t)$
is a polynomial in $x$ and $t$, satisfying
\[
    \Psi(\mathcal A;1,t)=(-1)^\ell\chi_{\mathcal A}(t).
\]
\end{theorem}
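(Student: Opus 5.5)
We prove Theorem~\ref{thm:ST-formula} by expanding around $x=1$ and computing $\Psi(\mathcal A;1,t)$ coefficientwise in $t$, using localization at the flats of $\mathcal A$ and induction on $\ell$.

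\textbf{Reorganizing in powers of $t(x-1)$.} Expanding $(t(x-1)-1)^p$ binomially gives
\[
    \Psi(\mathcal A;x,t)=\sum_{k=0}^{\ell}t^k(x-1)^k\,F_k(x),
    \qquad
    F_k(x):=\sum_{p=k}^{\ell}(-1)^{p-k}\binom pk\operatorname{Hilb}(D^p(\mathcal A);x).
\]
By Hilbert's theorem, each $\operatorname{Hilb}(D^p(\mathcal A);x)$ has the form $P_p(x)/(1-x)^\ell$ with $P_p$ a Laurent polynomial. So it suffices to prove two things. First, $F_k(x)$ has a pole of order at most $k$ at $x=1$; this gives polynomiality, since the factor $(x-1)^k$ clears the pole and the other coefficients are in $x^{\pm1}$. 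Second, $\lim_{x\to1}(x-1)^kF_k(x)=(-1)^\ell\sum_{\dim X=k}\mu(X)$.

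\textbf{Pole orders via support.} I would realize $F_k$ as the Hilbert series of an Euler characteristic of a complex of graded $S$-modules built from the $D^p(\mathcal A)$. The natural candidate is a Koszul-type complex obtained by contracting with $\mathrm d\alpha$ for a generic linear form $\alpha$, iterated $k$ times, so that the binomial coefficient $\binom pk$ arises. The order of the pole at $x=1$ of such an alternating sum is at most the dimension of the support of its homology. I would then show that this homology is supported on the union of the flats $X$ with $\dim X\le k$. The tool is localization. For a prime $\mathfrak p$ with generic point on $X\setminus\bigcup_{Y\subsetneq X}Y$, we have $D^p(\mathcal A)_{\mathfrak p}=D^p(\mathcal A_X)_{\mathfrak p}$. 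The localized arrangement $\mathcal A_X$ is a product $\mathcal A_X^{\mathrm{ess}}\times\Phi_X$, so Lemma~\ref{lem:product-log-derivations} splits $D^p(\mathcal A_X)$ into tensor products involving the free factor $\operatorname{Der}^jS_X$. On a free factor of dimension greater than $k$, the binomial alternating sums vanish identically, which forces exactness off the small-dimensional strata.

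\textbf{Leading coefficients and Möbius inversion.} The leading coefficient $\lim_{x\to1}(x-1)^kF_k(x)$ is then a sum of local multiplicities over the $k$-dimensional flats $X$. Each contribution depends only on the essential arrangement $\mathcal A_X^{\mathrm{ess}}$ of rank $\ell-k$, through the analogous invariant in degree $0$. By induction on $\ell$, together with the multiplicativity from Lemma~\ref{lem:product-log-derivations}, this local constant equals $(-1)^\ell\mu(X)$. To pin down the base value, I would compare with the recursion $\mu(X)=-\sum_{Y\supsetneq X}\mu(Y)$. That comparison uses the observation that $\Psi(\mathcal A;1,1)$ is computed by $\operatorname{Hilb}$ at $x=1$ of $\sum_p(-1)^p\binom pp\,D^p$, which matches $\chi_{\mathcal A}(1)=0$ when $r>0$. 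The check $D^\ell(\mathcal A)\cong S(-|\mathcal A|)$ from \eqref{eq:top-derivation} handles the essential base case, and the empty arrangement gives $(1+t(x-1)-1)^\ell/(1-x)^\ell$ times signs, which is $(-1)^\ell t^\ell$.

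\textbf{Main obstacle.} I expect the hardest step to be the pole-order bound for $F_k$. It requires an honest complex whose Euler characteristic is exactly the $\binom pk$-weighted sum, and a proof that its homology has support of dimension at most $k$. If the contraction complex fails to be well defined on $D^*(\mathcal A)$, I would instead argue purely with Hilbert polynomials: compute the multiplicity data of each $D^p(\mathcal A)$ along every flat through the localizations above, and sum stratum by stratum.
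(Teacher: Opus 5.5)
The paper does not prove this statement; it quotes it from Solomon--Terao. Your outline therefore has to stand on its own. Its overall shape is reasonable: a pole bound on $F_k$, local contributions at $k$-dimensional flats, and a Möbius normalization. But both load-bearing steps are missing.

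\textbf{The complex realizing $F_k$.} Contraction with $\mathrm d\alpha$ for a linear form $\alpha$ is the degree-one Solomon--Terao differential. Its Euler characteristic is $\sum_p(-1)^p\operatorname{Hilb}(D^p(\mathcal A);x)=F_0=\Psi(\mathcal A;x,0)$, so it only handles $k=0$. Composing $k$ such contractions gives maps $D^p\to D^{p-k}$. It does not give a complex in which $D^p$ occurs $\binom{p}{k}$ times with degree-zero differentials, and you never specify one.

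More seriously, ``the binomial alternating sums vanish identically, which forces exactness off the small-dimensional strata'' is a non sequitur: a vanishing Euler characteristic says nothing about exactness. Local exactness comes from an explicit contracting homotopy. For $k=0$ one exists: on $(D^\bullet(\mathcal A_X),\iota_{\mathrm d\alpha})$ with $\alpha|_X\neq0$, take $v\wedge-$ for a constant field $v$ tangent to $X$ with $\alpha(v)=1$. You have nothing comparable for $k>0$.

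The Hilbert-polynomial fallback does not repair this. The lower Laurent coefficients at $x=1$ of a single $\operatorname{Hilb}(D^p(\mathcal A);x)$ are not local data. For example, $S$ and $S(-1)$ become isomorphic after every localization, yet their Hilbert series differ at order $(1-x)^{1-\ell}$. Local data only control the leading coefficient of a class whose support is already known to be small, and that is exactly what must be proved. So neither polynomiality nor the description of the $t^k$-coefficient as a sum over $k$-dimensional flats is established.

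For polynomiality alone there is a route that avoids $\binom{p}{k}$-complexes. Set $t_d(x):=-(x^{d-1}-1)/(x-1)$. For $\eta\in U_d(\mathcal A)$ the Solomon--Terao complex has finite-length homology (Proposition~\ref{prop:ST-finite-homology}). Hence $\Psi(\mathcal A;x,t_d(x))=\sum_p(-x^{d-1})^p\operatorname{Hilb}(D^p(\mathcal A);x)$ is a Laurent polynomial. Use this for $d=1,\dots,\ell+1$ and invert the Vandermonde matrix in the $t_d(x)$, which at $x=1$ are the distinct numbers $0,-1,\dots,-\ell$. This shows each $(x-1)^kF_k(x)$ is regular at $x=1$, hence a polynomial. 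However, this gives no information about which flats contribute to $\Psi(\mathcal A;1,t)$, so the locality step still needs a genuine construction.

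\textbf{The normalization.} You assert that $\Psi(\mathcal A;1,1)$ is obtained from $\operatorname{Hilb}$ at $x=1$ of $\sum_p(-1)^pD^p$ and ``matches'' $\chi_{\mathcal A}(1)=0$. This is not an argument. The series $\operatorname{Hilb}(D^p(\mathcal A);1)$ diverges, and at $t=1$ the weight is $(x-2)^p$, whose higher-order terms in $x-1$ interact with the poles. This identity is what fixes the constant term $(-1)^\ell\mu(\{0\})$ via $\mu(\{0\})=-\sum_{Y\neq\{0\}}\mu(Y)$, so your induction does not close without it.

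The missing idea is the Euler derivation $\theta_E=\sum_i x_i\partial_{x_i}$. Take $\mathcal A\neq\varnothing$ and $H_0\in\mathcal A$. The complex $0\to D^0(\mathcal A)\to\cdots\to D^\ell(\mathcal A)\to0$ with differential $w=\theta_E\wedge-$ is contractible, with homotopy $h=\alpha_{H_0}^{-1}\iota_{\mathrm d\alpha_{H_0}}$. One checks that $h$ preserves logarithmic multiderivations and that $hw+wh=\mathrm{id}$ because $\theta_E(\alpha_{H_0})=\alpha_{H_0}$. With terms $D^p(\mathcal A)(p)$ the differentials have degree zero. The graded Euler characteristic therefore gives $\Psi(\mathcal A;x,x^{-1})=\sum_p(-x^{-1})^p\operatorname{Hilb}(D^p(\mathcal A);x)=0$, hence $\Psi(\mathcal A;1,1)=0$ once polynomiality is known.
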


For $\eta\in U_2(\mathcal A)$, applying the graded
Euler--Poincar\'e formula to the shifted complex
with terms $D^p(\mathcal A)(-p)$ yields the following.

\begin{proposition}[{\cite[Proposition~3.1]{abe2019solomon}}]
\label{prop-coho}
For every $\eta\in U_2(\mathcal A)$,
\[
    \Psi(\mathcal A;x,-1)
    =\sum_{p=0}^{\ell}(-x)^p
    \operatorname{Hilb}\bigl(H_p(D^*(\mathcal A),\eta);x\bigr).
\]
\end{proposition}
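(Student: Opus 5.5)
The plan is to apply the graded Euler--Poincar\'e principle degree by degree to a shifted version of the Solomon--Terao complex, and then match the resulting alternating sum with the definition \eqref{eq-ST bipoly} evaluated at $t=-1$.

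\emph{Step 1: shift the complex so that the differentials have degree $0$.} Since $\eta\in U_2(\mathcal A)$ has degree $d=2$, each $\partial_p$ is homogeneous of degree $d-1=1$. Set $C_p:=D^p(\mathcal A)(-p)$. Take an element of $(C_p)_k=D^p(\mathcal A)_{k-p}$. Its image under $\partial_p$ lies in $D^{p-1}(\mathcal A)_{k-p+1}=(C_{p-1})_k$. Hence the $\partial_p$ define a complex $C_\bullet$ of graded $S$-modules whose differentials have degree $0$. Its homology is
\[
H_p(C_\bullet)=H_p(D^*(\mathcal A),\eta)(-p).
\]

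\emph{Step 2: take the Euler characteristic in each degree.} Each $D^p(\mathcal A)$ is a finitely generated graded submodule of $\operatorname{Der}^pS$, so every graded component $(C_p)_k$ is finite-dimensional. Also $(C_p)_k=0$ for $k<p$, and only $0\le p\le\ell$ occur. So for each fixed $k$, the degree-$k$ strand
\[
0\to (C_\ell)_k\to\cdots\to (C_0)_k\to 0
\]
is a finite complex of finite-dimensional vector spaces. For such a complex,
\[
\sum_p(-1)^p\dim(C_p)_k=\sum_p(-1)^p\dim H_p(C_\bullet)_k .
\]
Multiplying by $x^k$ and summing over $k$ gives an identity of formal Laurent series, all of which are bounded below:
\[
\sum_{p}(-1)^p\operatorname{Hilb}(C_p;x)=\sum_p(-1)^p\operatorname{Hilb}(H_p(C_\bullet);x).
\]

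\emph{Step 3: rewrite both sides.} Since $\operatorname{Hilb}(M(-p);x)=x^p\operatorname{Hilb}(M;x)$, the left side equals $\sum_p(-x)^p\operatorname{Hilb}(D^p(\mathcal A);x)$, and the right side equals $\sum_p(-x)^p\operatorname{Hilb}(H_p(D^*(\mathcal A),\eta);x)$. Setting $t=-1$ in \eqref{eq-ST bipoly} gives
\[
t(x-1)-1=-(x-1)-1=-x,
\]
so
\[
\Psi(\mathcal A;x,-1)=\sum_p\operatorname{Hilb}(D^p(\mathcal A);x)(-x)^p,
\]
which is exactly the left side. This proves the identity.

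There is no real obstacle. The only point needing care is the degree bookkeeping of the shift $D^p(\mathcal A)(-p)$, which is what makes the strands degree-preserving and is also why the hypothesis $d=2$ is used. As a side remark, Proposition~\ref{prop:ST-finite-homology} shows the homology modules are finite-dimensional, so the right side is in fact a polynomial. This is consistent with Theorem~\ref{thm:ST-formula} but is not needed for the identity itself.
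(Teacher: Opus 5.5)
Your proposal is correct and follows exactly the route the paper indicates: apply the graded Euler--Poincar\'e formula degree by degree to the shifted complex with terms $D^p(\mathcal A)(-p)$, which has degree-preserving differentials because $d=2$, and then use $t(x-1)-1=-x$ at $t=-1$. Your side remark is also accurate: the hypothesis $\eta\in U_2(\mathcal A)$ is needed only to make the right-hand side a polynomial, not for the identity of formal series itself.
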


The following is \cite[Theorem~1.8]{abe2019solomon},
restated in our normalization.

\begin{corollary}\label{coro-ST=OS}
If $\mathcal A$ is tame, then for every
$\eta\in U_2(\mathcal A)$,
\[
    \operatorname{Hilb}(\mathrm{ST}(\mathcal A,\eta);x)
    =\Psi(\mathcal A;x,-1).
\]
In particular,
\[
    \dim_{\mathbb C}\mathrm{ST}(\mathcal A,\eta)
    =\dim_{\mathbb C}\mathrm{OS}(\mathcal A).
\]
\end{corollary}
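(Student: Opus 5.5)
This corollary is a restatement of \cite[Theorem~1.8]{abe2019solomon}, so the proof is short. We combine Proposition~\ref{prop-vanish} with Proposition~\ref{prop-coho}, and then specialize at $x=1$ using Theorem~\ref{thm:ST-formula} and \eqref{eq:OS-Hilbert-Poincare}.

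First, since $\mathcal A$ is tame and $\eta\in U_2(\mathcal A)$, Proposition~\ref{prop-vanish} with $d=2$ gives $H_p(D^*(\mathcal A),\eta)=0$ for all $p>0$. In the identity of Proposition~\ref{prop-coho}, every term with $p\ge1$ therefore drops out. Only the $p=0$ term survives, and it equals $\operatorname{Hilb}(H_0(D^*(\mathcal A),\eta);x)=\operatorname{Hilb}(\mathrm{ST}(\mathcal A,\eta);x)$. This is the first claimed equality.

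For the dimension statement, Proposition~\ref{prop:ST-finite-homology} says that $\mathrm{ST}(\mathcal A,\eta)$ is finite-dimensional. Hence its Hilbert series is a polynomial, and evaluating it at $x=1$ gives $\dim_{\mathbb C}\mathrm{ST}(\mathcal A,\eta)$. By Theorem~\ref{thm:ST-formula}, $\Psi(\mathcal A;x,t)$ is a polynomial, so it can be evaluated at $x=1$, $t=-1$:
\[
\dim_{\mathbb C}\mathrm{ST}(\mathcal A,\eta)=\Psi(\mathcal A;1,-1)=(-1)^\ell\chi_{\mathcal A}(-1).
\]
Finally, the relation $\pi_{\mathcal A}(t)=(-t)^\ell\chi_{\mathcal A}(-t^{-1})$ at $t=1$ gives $(-1)^\ell\chi_{\mathcal A}(-1)=\pi_{\mathcal A}(1)$. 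By \eqref{eq:OS-Hilbert-Poincare}, $\pi_{\mathcal A}(1)=\dim_{\mathbb C}\mathrm{OS}(\mathcal A)$.

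There is no real obstacle. The one point to check is that specializing at $x=1$ is legitimate. This holds because both sides of the first equality are polynomials in $x$: the left side by finite-dimensionality, and the right side by Theorem~\ref{thm:ST-formula}. The potentially delicate input is the sign normalization of $\Psi$ relative to \cite{abe2019solomon}. The footnote's identity $\widetilde\Psi(\mathcal A;x,1)=\Psi(\mathcal A;x,-1)$ settles it, and in any case Proposition~\ref{prop-coho} is already stated in the present normalization.
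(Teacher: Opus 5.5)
Your proposal is correct. It is the derivation the paper has in mind: the paper gives no separate proof and simply cites \cite[Theorem~1.8]{abe2019solomon}, but it places the corollary directly after Propositions~\ref{prop-vanish} and~\ref{prop-coho}. Tameness kills the $p>0$ terms of the Euler--Poincar\'e identity, and specializing the resulting polynomial identity at $x=1$ via Theorem~\ref{thm:ST-formula}, $\pi_{\mathcal A}(1)=(-1)^\ell\chi_{\mathcal A}(-1)$ and \eqref{eq:OS-Hilbert-Poincare} gives the dimension equality, exactly as you argue.
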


\section{Top-degree of Solomon--Terao algebra} 

In this section, we prove Theorem~\ref{thm:main1}.
The proof is organized as follows.
We first establish some preliminary results and then use them to compute the local cohomology modules of $D^p(\mathcal A)$.
We then construct a double complex and its two spectral sequences to determine the top graded component of $\mathrm{ST}(\mathcal A,\eta)$.
Finally, we compare with the empty arrangement via multiplication by $Q(\mathcal A)$ to identify $[Q(\mathcal A)\det(\eta_{ij})]$ as a generator of this component.

The interplay between local cohomology, logarithmic differential forms, and homological properties of hyperplane arrangements has also been investigated in \cite{DSSWW2013}. For background on local cohomology and \v Cech complexes, we refer to Brodmann and Sharp \cite[Chapters~1 and~5]{brodmannSharp2012}.
Let $R=\mathbb C[y_1,\dots,y_n]$ be the standard graded polynomial ring over $\mathbb C$.
For an ideal $\mathfrak a=(a_1,\dots,a_m)\subseteq R$ and an $R$-module $M$,
we write $H^q_{\mathfrak a}(M)$ for the $q$-th \emph{local cohomology} module of $M$ with respect to $\mathfrak a$,
with the convention that $H^q_{\mathfrak a}(M)=0$ for $q<0$.
Let $C(M)^\bullet$ denote the \emph{\v Cech complex} of $M$ with respect to the generators $a_1,\ldots,a_m$.
It is well known that the cohomology of the \v Cech complex computes the local cohomology, giving
\[
    H^q(C(M)^\bullet)\cong H^q_{\mathfrak a}(M)
\]
for every $q\in\mathbb Z$.
We write $R_+:=\bigoplus_{d>0}R_d$.

We record the following direct consequence of the K\"unneth formula for complexes
\cite[Theorem~3.6.3]{Wei94}, applied to \v Cech complexes.

\begin{lemma}
\label{kunneth}
Let $R_1$ and $R_2$ be standard graded polynomial rings over $\mathbb C$,
and set $R=R_1\otimes_{\mathbb C}R_2$ with the usual total grading.
For a graded $R_1$-module $M$ and a graded $R_2$-module $N$,
there is a natural isomorphism of graded $R$-modules
\[
    H^q_{R_+}(M\otimes_{\mathbb C}N)
    \cong
    \bigoplus_{i+j=q}
    H^i_{(R_1)_+}(M)\otimes_{\mathbb C}H^j_{(R_2)_+}(N)
\]
for every $q\in\mathbb Z$.
\end{lemma}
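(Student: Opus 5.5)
The plan is to reduce to the ordinary Künneth theorem for tensor products of complexes over $\mathbb C$, applied to Čech complexes. Let $y_1,\dots,y_a$ be the variables of $R_1$ and $z_1,\dots,z_b$ those of $R_2$. Then $R_+$ is generated by all of the $y$'s and $z$'s, and $(R_1)_+$, $(R_2)_+$ are generated by the $y$'s and the $z$'s respectively. We use the Čech complexes with respect to these variable generators.

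\emph{Step 1: factor the Čech complex.} Let $C^\bullet_{R_1}(M)$ be the Čech complex of $M$ on $y_1,\dots,y_a$, and $C^\bullet_{R_2}(N)$ the Čech complex of $N$ on $z_1,\dots,z_b$. We claim there is an isomorphism of complexes of graded $R$-modules
\[
    C^\bullet_R(M\otimes_{\mathbb C}N)\cong C^\bullet_{R_1}(M)\otimes_{\mathbb C}C^\bullet_{R_2}(N),
\]
where the right side is the total tensor complex. To check this, note that the Čech complex on a sequence is the tensor product over the ring of the two-term complexes $0\to R\to R_{g}\to0$. Splitting the $a+b$ generators into the $y$-part and the $z$-part therefore factors the complex. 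Termwise, each summand is a localization, and for $I\subseteq\{1,\dots,a\}$, $J\subseteq\{1,\dots,b\}$ we have
\[
    (M\otimes_{\mathbb C}N)_{y_Iz_J}\cong M_{y_I}\otimes_{\mathbb C}N_{z_J}.
\]
This holds because $y_I$ acts only on the first factor and $z_J$ only on the second, so inverting $y_Iz_J$ amounts to inverting each separately. The signs in the Čech differential agree with the Koszul sign convention of the tensor product, once the $y$-indices are ordered before the $z$-indices. All these maps are homogeneous, so the identification respects the grading.

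\emph{Step 2: apply Künneth.} Since $\mathbb C$ is a field, every $\mathbb C$-module is flat and all Tor terms vanish. The Künneth formula \cite[Theorem~3.6.3]{Wei94} then gives
\[
    H^q\bigl(C^\bullet_{R_1}(M)\otimes_{\mathbb C}C^\bullet_{R_2}(N)\bigr)\cong\bigoplus_{i+j=q}H^i(C^\bullet_{R_1}(M))\otimes_{\mathbb C}H^j(C^\bullet_{R_2}(N)).
\]
Combining this with Step 1 and with the identification of Čech cohomology with local cohomology gives the stated isomorphism.

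\emph{Step 3: structure and naturality.} The Künneth map $[u]\otimes[v]\mapsto[u\otimes v]$ is $R_1\otimes R_2$-linear, since $R_1$ acts on the first factor and $R_2$ on the second. It is also natural in $M$ and $N$ and is homogeneous, so the isomorphism is one of graded $R$-modules.

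The main obstacle is the bookkeeping in Step 1: verifying that the Čech differential signs match the tensor-product sign convention, and that the localization identity is compatible with the $R$-module structure. Both are routine once the Čech complex is written as an iterated tensor product of two-term complexes.
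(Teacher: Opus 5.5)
Your proposal is correct and follows the paper's approach. The paper gives no proof beyond citing the Künneth formula for complexes \cite[Theorem~3.6.3]{Wei94} applied to Čech complexes; your factorization $C^\bullet_R(M\otimes_{\mathbb C}N)\cong C^\bullet_{R_1}(M)\otimes_{\mathbb C}C^\bullet_{R_2}(N)$, followed by Künneth over the field $\mathbb C$, is exactly the argument it leaves implicit.
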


We record the following computation of local cohomology for polynomial rings.

\begin{lemma}
\label{polyring}
Let $n\ge0$ and let
$R=\mathbb C[y_1,\ldots,y_n]$ be standard graded, with
$R_+=(y_1,\ldots,y_n)$.
Then
\[
    H^q_{R_+}(R)=0
    \qquad\text{for }q\neq n,
\]
while
\[
    \operatorname{end}H^n_{R_+}(R)=-n,
    \qquad
    \dim_{\mathbb C}H^n_{R_+}(R)_{-n}=1.
\]
\end{lemma}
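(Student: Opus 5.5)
We will compute $H^q_{R_+}(R)$ directly from the \v Cech complex on $y_1,\ldots,y_n$, inducting on $n$ with Lemma~\ref{kunneth}. The base cases come first. For $n=0$ we have $R=\mathbb C$, $R_+=0$, and the \v Cech complex is just $\mathbb C$ in degree $0$. Thus $H^0_{R_+}(R)=\mathbb C$, concentrated in internal degree $0=-n$, with end $0$ and one-dimensional top component, while all other $H^q$ vanish.

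For $n=1$, the \v Cech complex is $0\to\mathbb C[y]\to\mathbb C[y,y^{-1}]\to0$. The map is injective, so $H^0=0$. The cokernel $H^1$ has $\mathbb C$-basis $\{y^{-k}\mid k\ge1\}$. Its end is $-1$, and the degree $-1$ component is spanned by $y^{-1}$, so it is one-dimensional.

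For the inductive step, write $R=\mathbb C[y_1,\ldots,y_{n-1}]\otimes_{\mathbb C}\mathbb C[y_n]$ with $R_1$, $R_2$ the two factors, and take $M=R_1$, $N=R_2$. Lemma~\ref{kunneth} gives
\[
H^q_{R_+}(R)\cong\bigoplus_{i+j=q}H^i_{(R_1)_+}(R_1)\otimes_{\mathbb C}H^j_{(R_2)_+}(R_2).
\]
By the induction hypothesis and the case $n=1$, the only nonzero term occurs at $i=n-1$, $j=1$. Hence $H^q_{R_+}(R)=0$ for $q\ne n$.

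For $q=n$, the graded tensor product satisfies $(A\otimes B)_k=\bigoplus_{a+b=k}A_a\otimes B_b$. Both factors vanish above degrees $-(n-1)$ and $-1$ respectively, so the tensor product vanishes above degree $-n$. In degree exactly $-n$, the only contributing pair is $(a,b)=(-(n-1),-1)$, giving $\mathbb C\otimes\mathbb C\cong\mathbb C$. Therefore $\operatorname{end}H^n_{R_+}(R)=-n$ and $\dim_{\mathbb C}H^n_{R_+}(R)_{-n}=1$.

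Alternatively, one can avoid the induction: the top \v Cech cohomology is $R_{y_1\cdots y_n}$ modulo the images of the $R_{y_1\cdots\widehat{y_i}\cdots y_n}$. This has basis the monomials $y^{-a}$ with every $a_i\ge1$, so its top degree is $-n$, attained only by $(y_1\cdots y_n)^{-1}$.

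The only step needing care is the grading bookkeeping in the tensor product, which is where the end and the top dimension are determined.
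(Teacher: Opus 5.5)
Your proof is correct and follows essentially the same route as the paper: the one-variable \v{C}ech computation combined with Lemma~\ref{kunneth}. The only difference is that the paper applies the K\"unneth isomorphism to the $n$-fold tensor product $\mathbb C[y_1]\otimes_{\mathbb C}\cdots\otimes_{\mathbb C}\mathbb C[y_n]$ in one step, obtaining the explicit basis $y_1^{-d_1}\cdots y_n^{-d_n}$ with all $d_i\ge1$ of $H^n_{R_+}(R)$. You instead split off one variable at a time and read off the end and the top dimension from the grading of the two-factor tensor product.
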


\begin{proof}
For $n=0$, the assertions are immediate.
For $n\ge1$, the one-variable \v Cech complex gives
$H^q_{(y_i)}(\mathbb C[y_i])=0$ for $q\ne1$ and
$H^1_{(y_i)}(\mathbb C[y_i])
\cong\bigoplus_{d_i\ge1}\mathbb C y_i^{-d_i}$.
Applying Lemma~\ref{kunneth} to
$R=\mathbb C[y_1]\otimes_{\mathbb C}\cdots\otimes_{\mathbb C}\mathbb C[y_n]$
gives
\[
    H^q_{R_+}(R)=0\quad(q\ne n),
    \qquad
    H^n_{R_+}(R)
    \cong
    \bigoplus_{d_1,\ldots,d_n\ge1}
    \mathbb C y_1^{-d_1}\cdots y_n^{-d_n}.\qedhere
\]
\end{proof}

We adopt the following definition of Castelnuovo--Mumford regularity from \cite[Definition~16.2.9]{brodmannSharp2012}.

\begin{definition}
\label{reg}
For a finitely generated graded $R$-module $M$, its
\emph{Castelnuovo--Mumford regularity} is
\[
    \operatorname{reg}(M)
    :=
    \sup_{q\ge0}
    \left\{
        \operatorname{end} H^q_{R_+}(M)+q
    \right\}.
\]
\end{definition}
For the remainder of this section, we set $V=\mathbb C^\ell$ and $d\ge2$.
Fix a central hyperplane arrangement $\mathcal A$ in $V$ and $\eta\in U_d(\mathcal{A})$.
We write $\tau:=|\mathcal A|+\ell(d-2)$.

We next compute the local cohomology of $D^p(\mathcal A)$ using the regularity bound for logarithmic multiderivations.

\begin{proposition}\label{prop:HqDp}
For every $0\le p\le q\le\ell$ and
$u\ge \tau$, we have
\[
\dim_{\mathbb C} H^q_{S_+}(D^p(\mathcal A))_{u-p(d-1)}
=
\begin{cases}
1, & \text{if } (p,q,u)=(\ell,\ell,\tau),\\
0, & \text{otherwise}.
\end{cases}
\]
\end{proposition}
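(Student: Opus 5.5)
The plan is to split off the case $p=\ell$, where everything is explicit, and to handle $p<\ell$ with Abe's regularity bound for logarithmic multiderivations \cite[Theorem~1.10]{abe2025solomon}. Throughout, write $e:=u-p(d-1)$ for the internal degree. The proof then reduces to an inequality between $e$ and $\operatorname{end}H^q_{S_+}(D^p(\mathcal A))$.

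\emph{Case $p=\ell$.} Since $p\le q\le\ell$, we have $q=\ell$. By \eqref{eq:top-derivation}, $D^\ell(\mathcal A)\cong S(-|\mathcal A|)$, so $H^\ell_{S_+}(D^\ell(\mathcal A))_e\cong H^\ell_{S_+}(S)_{e-|\mathcal A|}$. Here
\[
e-|\mathcal A|=u-\ell(d-1)-|\mathcal A|=u-\tau-\ell .
\]
By Lemma~\ref{polyring}, this component vanishes for $u>\tau$ and is one-dimensional for $u=\tau$, as claimed.

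\emph{Case $p<\ell$, essential arrangements.} I expect Abe's bound, in the essential case, to take the form
\[
\operatorname{reg}D^p(\mathcal A)\le|\mathcal A|-\ell+p\qquad(p<\ell).
\]
This matches the free case: $D^p$ is generated in degrees that are sums of $p$ exponents, all exponents are $\ge1$, and they sum to $|\mathcal A|$. By Definition~\ref{reg} the bound gives $\operatorname{end}H^q_{S_+}(D^p)\le|\mathcal A|-\ell+p-q$. For $u\ge\tau$ we have $e\ge|\mathcal A|+\ell(d-2)-p(d-1)$, and a direct computation gives
\[
\bigl(|\mathcal A|+\ell(d-2)-p(d-1)\bigr)-\bigl(|\mathcal A|-\ell+p-q\bigr)=(\ell-p)(d-1)+(q-p)>0 .
\]
So every relevant component vanishes. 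Small $q$ needs no separate argument, although it also follows directly: $D^p$ is reflexive, hence has depth $\ge2$ when $\ell\ge2$, so $H^0$ and $H^1$ vanish.

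\emph{Non-essential arrangements.} If $\mathcal A$ is not essential, or if Abe's theorem is stated only for essential arrangements, reduce as follows. Write $V=U\oplus T$ and use Lemma~\ref{lem:product-log-derivations}:
\[
D^p(\mathcal A)\cong\bigoplus_{i+j=p}D^i(\mathcal A^{\mathrm{ess}})\otimes_{\mathbb C}\operatorname{Der}^jS_T,
\]
where $\operatorname{Der}^jS_T$ is a free $S_T$-module generated in degree $0$. By Lemma~\ref{kunneth} and Lemma~\ref{polyring} (with $t=\dim T$),
\[
H^q_{S_+}(D^p(\mathcal A))\cong\bigoplus_{i+j=p}H^{q-t}_{(S_U)_+}(D^i(\mathcal A^{\mathrm{ess}}))\otimes H^t_{(S_T)_+}(S_T)^{\binom tj}.
\]
The second factor has end $-t$. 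Combining this with the essential bound for $D^i(\mathcal A^{\mathrm{ess}})$ in $U$ (dimension $r=\ell-t$), and with the explicit description of $D^r(\mathcal A^{\mathrm{ess}})$, gives
\[
\operatorname{end}H^q_{S_+}(D^p(\mathcal A))\le|\mathcal A|-r+i-(q-t)-t=|\mathcal A|-r+i-q,\qquad i\le\min(p,r).
\]
Re-running the inequality with these parameters should again give strict vanishing unless $p=q=\ell$; the extra slack comes from $j=p-i\ge0$ and from $d\ge2$. One still has to check separately the term $i=r$, where $D^r(\mathcal A^{\mathrm{ess}})$ is free of rank one and contributes end exactly $|\mathcal A|-r$.

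\emph{Main obstacle.} The difficult step is pinning down the exact form and hypotheses of Abe's regularity theorem, and verifying that it yields the sharp bound $|\mathcal A|-\ell+p$ for $p<\ell$, rather than a weaker bound such as $|\mathcal A|$, which fails when $d=2$ and $q=p$. If Abe's bound is only stated as $\operatorname{reg}D^p\le|\mathcal A|-\ell+p$ for essential arrangements, the Künneth reduction above is exactly what is needed to transfer it, and the bookkeeping of the $i=r$ terms becomes the delicate point.
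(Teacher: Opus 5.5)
Your proposal is correct and is essentially the paper's argument. The paper runs your non-essential reduction uniformly: Lemma~\ref{lem:product-log-derivations}, K\"unneth, Abe's bound $\operatorname{reg}D^i(\mathcal A^{\mathrm{ess}})\le|\mathcal A|-r+i$ for $i<r$, and $D^r(\mathcal A^{\mathrm{ess}})\cong S_U(-|\mathcal A|)$ handled by Lemma~\ref{polyring}. Your essential case and your $p=\ell$ case are special instances of that argument.

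The bookkeeping you left as ``should'' does close. For $i<r$, the margin between $u-p(d-1)$ and the end bound $|\mathcal A|-r+i-q$ is $(\ell-p)(d-2)+(q-p)+(r-i)+(u-\tau)\ge1$, with the slack coming from $r-i\ge1$ rather than from $j\ge0$. The $i=r$ summand lives only in $q=\ell$, where the margin is $(\ell-p)(d-1)+(u-\tau)$, and this vanishes exactly at $(p,u)=(\ell,\tau)$.
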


\begin{proof}
We use Lemma~\ref{lem:product-log-derivations} and Lemma~\ref{kunneth} to express the dimension of the local cohomology as a sum, 
then apply the regularity bound to eliminate all summands except the one with $i=r$.
We write $v_p:=u-p(d-1)$. Then

\begin{equation}\label{STbound1}
\begin{aligned}
    \dim_{\mathbb C} H^q_{S_+}(D^p(\mathcal A))_{v_p}
    &=\dim_\mathbb C H_{S_+}^q(\bigoplus_{i+j=p}D^i(\mathcal{A}^\mathrm{ess})\otimes_\mathbb C \operatorname{Der}^j S_T)_{v_p}\\
    &=\sum_{i+j=p}\dim_\mathbb C H_{S_+}^q(D^i(\mathcal{A}^\mathrm{ess})\otimes_\mathbb C \operatorname{Der}^j S_T)_{v_p}\\
    &=\sum_{i+j=p}\binom{\ell-r}{j}\dim_\mathbb C H_{S_+}^q(D^i(\mathcal{A}^\mathrm{ess})\otimes_\mathbb C S_T)_{v_p}\\
    &=\sum_{i+j=p}\binom{\ell-r}{j}\dim_\mathbb C (H_{(S_U)_+}^{q-\ell+r}(D^i(\mathcal{A}^\mathrm{ess}))\otimes_\mathbb C H_{(S_T)_+}^{\ell-r}(S_T))_{v_p}\\
\end{aligned}
\end{equation}
The first equality follows from Lemma~\ref{lem:product-log-derivations}.
The second equality holds by the additivity of local cohomology.
The third equality uses that $\operatorname{Der}^j S_T$ is a free $S_T$-module of rank $\binom{\ell-r}{j}$.
The last equality comes from Lemma~\ref{kunneth} and Lemma~\ref{polyring}.

By \cite[Theorem~1.10]{abe2025solomon}, we have  
$\operatorname{reg}D^i(\mathcal A^{\mathrm{ess}}) \le |\mathcal A|-r+i$ for $i<r$.
Combining this with Definition~\ref{reg} and Lemma~\ref{polyring}, we have
\begin{align*}
&\,\operatorname{end}\left(H_{(S_U)_+}^{q-\ell+r}(D^i(\mathcal A^{\mathrm{ess}}))
\otimes_{\mathbb C}H_{(S_T)_+}^{\ell-r}(S_T)\right)\\
=&\,\operatorname{end}H_{(S_U)_+}^{q-\ell+r}(D^i(\mathcal A^{\mathrm{ess}}))+\operatorname{end}H_{(S_T)_+}^{\ell-r}(S_T)\\
\le&\,|\mathcal A|-r+i-(q-\ell+r)-(\ell-r)\\
\le&\, v_p-(\ell-p)(d-2)-(r-i)<v_p.
\end{align*}
Thus all summands with $i<r$ in the last row of \eqref{STbound1} vanish, so \eqref{STbound1} reduces to

\begin{align*}
    \dim_{\mathbb C} H^q_{S_+}(D^p(\mathcal A))_{v_p}
    &=\binom{\ell-r}{p-r}\dim_\mathbb C \left(H_{(S_U)_+}^{q-\ell+r}(D^r(\mathcal{A}^\mathrm{ess}))\otimes_\mathbb C H_{(S_T)_+}^{\ell-r}(S_T)\right)_{v_p}\\
    &=\binom{\ell-r}{p-r}\dim_\mathbb C \left(H_{(S_U)_+}^{q-\ell+r}(S_U)\otimes_\mathbb C H_{(S_T)_+}^{\ell-r}(S_T)\right)_{v_p-|\mathcal{A}|}\\
    &=\begin{cases}
        1, & \text{if } (p,q,u)=(\ell,\ell,\tau),\\
        0, & \text{otherwise}.
    \end{cases}
\end{align*}
The second equality comes from \eqref{eq:top-derivation} and
the fact that local cohomology commutes with graded shifts.
The last equality follows from Lemma~\ref{polyring}.
\end{proof}

We next combine Proposition~\ref{prop:HqDp} with the spectral sequence method to determine the top graded components of the Solomon--Terao algebra.

\begin{proposition}
\label{double-complex}
For every $u\ge \tau$, there is an isomorphism
\[
    \mathrm{ST}(\mathcal A,\eta)_u
    \cong
    H^\ell_{S_+}(D^\ell(\mathcal A))_{u-\ell(d-1)}.
\]
In particular,
\[
    \dim_{\mathbb C}\mathrm{ST}(\mathcal A,\eta)_u=
    \begin{cases}
        0,&u>\tau,\\
        1,&u=\tau.
    \end{cases}
\]
\end{proposition}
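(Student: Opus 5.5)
We plan to form the double complex obtained by applying the \v Cech complex (with respect to $x_1,\dots,x_\ell$) to the Solomon--Terao complex, after regrading so that the differentials are homogeneous of degree zero. Concretely, set $K_p:=D^p(\mathcal A)(-p(d-1))$, so that $\partial_p:K_p\to K_{p-1}$ is of degree $0$ and $(K_p)_u=D^p(\mathcal A)_{u-p(d-1)}$. Then consider $C^{p,q}:=C(K_p)^q$, with horizontal differential induced by $\partial$ (lowering $p$) and vertical \v Cech differential (raising $q$). We place everything in a fixed internal degree $u\ge\tau$ and obtain a double complex of $\mathbb C$-vector spaces $C^{p,q}_u$, $0\le p,q\le\ell$, with total degree $q-p$. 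It is bounded, so both spectral sequences converge to the same total cohomology.

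First we take vertical (\v Cech) cohomology. This gives $E_1^{p,q}=H^q_{S_+}(K_p)_u=H^q_{S_+}(D^p(\mathcal A))_{u-p(d-1)}$. For $q<p$ we still need vanishing, and we would get it from the same regularity estimate as in Proposition~\ref{prop:HqDp}. We hope its proof adapts, or we restrict attention to total degrees where only $q\ge p$ contributes. By Proposition~\ref{prop:HqDp}, in the range $q\ge p$ the only nonzero term is $(p,q)=(\ell,\ell)$ at $u=\tau$. Consequently the total cohomology in total degree $0$ is $H^\ell_{S_+}(D^\ell(\mathcal A))_{u-\ell(d-1)}$, since this single term survives to $E_\infty$. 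All other total degrees vanish. The delicate point is the terms with $q<p$, which lie in negative total degree. Terms of negative total degree cannot hit or receive differentials from the $(\ell,\ell)$ spot: that spot has total degree $0$, and differentials change total degree by exactly $1$, so only degrees $\pm1$ matter. Thus for total degrees $0$ and $1$ we need the vanishing only for $q-p\in\{-1,0,1\}$. The remaining cases are handled by Proposition~\ref{prop:HqDp} for $q\ge p$, plus the case $q=p-1$.

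Second we take horizontal homology. The \v Cech complex is exact as a functor on modules, since localization is flat, so $E_1'^{p,q}=C(H_p(D^*(\mathcal A),\eta)(-p(d-1)))^q_u$ and $E_2'^{p,q}=H^q_{S_+}(H_p(K_\bullet))_u$. By Proposition~\ref{prop:ST-finite-homology} each $H_p$ has finite length. Hence $H^q_{S_+}(H_p)=0$ for $q>0$ and $H^0_{S_+}(H_p)=H_p$. The sequence therefore degenerates, and total degree $-p$ equals $H_p(K_\bullet)_u$. In particular total degree $0$ gives $\mathrm{ST}(\mathcal A,\eta)_u$. Comparing the two computations yields the isomorphism, and Proposition~\ref{prop:HqDp} then gives the dimensions.

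The main obstacle is the vanishing of $H^{p-1}_{S_+}(D^p(\mathcal A))_{u-p(d-1)}$ for $u\ge\tau$, which is needed so that nothing in total degree $-1$ kills the $(\ell,\ell)$ term. Note, however, that a differential from total degree $-1$ would map to total degree $0$ at bidegree $(p-r,\;p-1+r-1)$ type positions. The only target is $(\ell,\ell)$, whose incoming $d_r$ comes from $(\ell+r-1,\ell-r)$ or similar, which is out of range when $p\le\ell$. So we would check that all differentials into and out of $E^{\ell,\ell}$ start or end outside $0\le p,q\le\ell$ or at zero terms from $q\ge p$. Failing that, we would extend the regularity estimate in the proof of Proposition~\ref{prop:HqDp} to $q=p-1$. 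There the bound becomes $v_p-(\ell-p)(d-2)-(r-i)+1\le v_p$, which is still fine for $i<r$, and for $i=r$ the claim reduces to Lemma~\ref{polyring}.
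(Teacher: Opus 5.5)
Your strategy is the paper's: the same regraded \v Cech double complex, and the same degeneration of the spectral sequence that takes horizontal homology first, via finite length of $H_p(D^*(\mathcal A),\eta)$ (Proposition~\ref{prop:ST-finite-homology}). You also apply Proposition~\ref{prop:HqDp} to the $E_1$ page of the \v Cech-first spectral sequence, as the paper does.

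What fails is your treatment of the one delicate step, the survival of $E_1^{\ell,\ell}$. You name the vanishing of $H^{p-1}_{S_+}(D^p(\mathcal A))_{u-p(d-1)}$ as the main obstacle and propose to get it from the regularity bound. But your own estimate only gives $\operatorname{end}\le v_p-(\ell-p)(d-2)-(r-i)+1$. When $d=2$, $i=r-1$ and $u=\tau$, this equals $v_p$, so it does not exclude a nonzero component in degree $v_p$. That is exactly the case needed for one-dimensionality. More generally, for $q=p-k$ the bound loses $k$. So your claims that the $q<p$ terms vanish and that ``all other total degrees vanish'' are unsupported. Combined with your second spectral sequence, they would assert $H_p(D^*(\mathcal A),\eta)_{u-p(d-1)}=0$ for all $p>0$ and $u\ge\tau$, which nothing available proves.

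None of this is needed. The bidegree check you gesture at settles the matter once the bidegrees are right. In your indexing, $d_r\colon E_r^{p,q}\to E_r^{p-r,q-r+1}$.
\begin{itemize}
\item A differential into $(\ell,\ell)$ must come from $(\ell+r,\ell+r-1)$, which is zero because $K_{\ell+r}=0$. Your bidegree $(\ell+r-1,\ell-r)$ is wrong: for $r=1$ it is the in-range spot $(\ell,\ell-1)$, whose $d_1$ actually goes to $(\ell-1,\ell-1)$.
\item A differential out of $(\ell,\ell)$ lands in $(\ell-r,\ell-r+1)$, where $q=p+1>p$. So it already vanishes on $E_1$ by Proposition~\ref{prop:HqDp}.
\end{itemize}
Hence $E_1^{\ell,\ell}=E_\infty^{\ell,\ell}$. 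Since also $E_1^{p,p}=0$ for $p<\ell$, you get $H^0(\mathrm{Tot})_u\cong H^\ell_{S_+}(D^\ell(\mathcal A))_{u-\ell(d-1)}$. This uses only the range $q\ge p$, which is precisely the paper's argument. You should drop the fallback and the assertions about negative total degrees.
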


\begin{proof}
Let $K^\bullet$ be the cochain complex concentrated in degrees $-\ell,\ldots,0$,
with $K^{-p}=D^p(\mathcal A)(-p(d-1))$ for $0\le p\le\ell$, 
and differentials induced by $\partial_p$.
Since $\partial_p$ is homogeneous of degree $d-1$, 
these differentials are homogeneous of degree zero.
By definition, $H^0(K^\bullet)=\mathrm{ST}(\mathcal A,\eta)$.

Set $C^{p,q}:=C(K^p)^q$, where $C(K^p)^\bullet$ is the \v Cech complex of $K^p$ with respect to $x_1,\ldots,x_\ell$.
Since the construction $M\mapsto C(M)^\bullet$ is functorial,
the differentials of $K^\bullet$ induce chain maps between the corresponding \v Cech complexes.
This gives a bounded double cochain complex $C^{\bullet,\bullet}$ of graded $S$-modules, 
with all differentials homogeneous of degree zero.

We consider two spectral sequences associated with $C^{\bullet,\bullet}$. 
Let ${}^{I}E$ and ${}^{II}E$ denote the spectral sequences obtained by first taking cohomology of $C^{p,\bullet}$ and $C^{\bullet,p}$, respectively, with $E_1$-pages
\[
\begin{aligned}
    {}^{I}E_1^{p,q}
    &=H^q(C^{p,\bullet})\cong H^q_{S_+}(K^p),\\
    {}^{II}E_1^{p,q}
    &=H^q(C^{\bullet,p}).
\end{aligned}
\]

As a finite direct sum of localization functors, each $C(-)^q$ is exact, so
${}^{II}E_1^{p,q}=H^q(C(K^\bullet)^p) \cong C(H^q(K^\bullet))^p$, and then
${}^{II}E_2^{p,q}\cong H^p_{S_+}(H^q(K^\bullet))$.
By Proposition~\ref{prop:ST-finite-homology}, the graded modules $H^q(K^\bullet)$ are finite-dimensional over $\mathbb C$, and hence $S_+$-torsion.
Applying \cite[Corollary~2.1.7(i)]{brodmannSharp2012} to $H^q(K^\bullet)$, we obtain
\[
    {}^{II}E_2^{p,q}
    \cong H^p_{S_+}\!\left(H^q(K^\bullet)\right)
    \cong
    \begin{cases}
        H^q(K^\bullet), & p=0,\\
        0, & p\ne0.
    \end{cases}
\]
Thus ${}^{II}E$ collapses at $E_2$, 
and $H^0(\operatorname{Tot}(C))\cong {}^{II}E_2^{0,0}\cong \mathrm{ST}(\mathcal A,\eta)$ as graded $S$-modules.

Fix $u\ge\tau$.
For $0\le p,q\le\ell$, the spectral sequence ${}^{I}E$ satisfies
\[
    \left({}^{I} E_1^{-p,q}\right)_u
    \cong
    H^q_{S_+}\!\left(D^p(\mathcal A)\right)_{u-p(d-1)}.
\]
For every $n\ge1$, the differentials involving $\left({}^{I} E_n^{-\ell,\ell}\right)_u$ have the form
\[
    \left({}^{I} E_n^{-\ell-n,\ell+n-1}\right)_u
    \longrightarrow
    \left({}^{I} E_n^{-\ell,\ell}\right)_u
    \longrightarrow
    \left({}^{I} E_n^{-\ell+n,\ell-n+1}\right)_u.
\]
The term on the left is zero since $-\ell-n<-\ell$ and
the term on the right is zero by Proposition~\ref{prop:HqDp}.
Thus we have the isomorphism $\left({}^{I} E_1^{-\ell,\ell}\right)_u\cong\left({}^{I} E_\infty^{-\ell,\ell}\right)_u$.
By Proposition~\ref{prop:HqDp} again,
$\left({}^{I} E_\infty^{-p,p}\right)_u=\left({}^{I} E_1^{-p,p}\right)_u=0$ for every $0\le p<\ell$.
Hence
\[
\mathrm{ST}(\mathcal A,\eta)_u
\cong H^0(\operatorname{Tot}(C))_u
\cong \left({}^{I} E_\infty^{-\ell,\ell}\right)_u
\cong \left({}^{I} E_1^{-\ell,\ell}\right)_u
\cong H^\ell_{S_+}\!\left(D^\ell(\mathcal A)\right)_{u-\ell(d-1)}.
\]
By Proposition~\ref{prop:HqDp},
$H^\ell_{S_+}(D^\ell(\mathcal A))_{u-\ell(d-1)}$ has dimension
$0$ if $u>\tau$ and dimension $1$ if $u=\tau$.
This proves the proposition.
\end{proof}

We next identify the top graded component explicitly by comparing the spectral sequences of $\mathcal A$ and $\Phi_V$, which are constructed in the proof of Proposition~\ref{double-complex}.

\begin{proposition}
\label{Q(A)det}
We have
\[
    \mathrm{ST}(\mathcal A,\eta)_\tau
    =\mathbb C\cdot[Q(\mathcal A)\det(\eta_{ij})].
\]
\end{proposition}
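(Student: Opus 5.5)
Since Proposition~\ref{double-complex} already gives $\dim_{\mathbb C}\mathrm{ST}(\mathcal A,\eta)_\tau=1$, it suffices to show that the class of $Q(\mathcal A)\det(\eta_{ij})$ is nonzero. Note first that $\deg Q(\mathcal A)\det(\eta_{ij})=|\mathcal A|+\ell(d-2)=\tau$. The plan is to compare $\mathcal A$ with the empty arrangement $\Phi_V$. Multiplication by $Q(\mathcal A)$ gives $S$-linear maps
\[
\operatorname{Der}^pS=D^p(\Phi_V)\to D^p(\mathcal A),\qquad \theta\mapsto Q(\mathcal A)\theta .
\]
These commute with the Solomon--Terao differentials, because $\partial_p$ is contraction with $\dd\eta$ and is $S$-linear. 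Hence we get a chain map $K^\bullet_{\Phi_V}(-|\mathcal A|)\to K^\bullet_{\mathcal A}$. By functoriality of \v Cech complexes it extends to a map of double complexes, and so to maps of both spectral sequences from the proof of Proposition~\ref{double-complex}.

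For $\Phi_V$, the complex is the Koszul complex on $\eta_1,\dots,\eta_\ell$ (where $\eta_i=\partial_{x_i}\eta$). Since $\eta\in U_d(\Phi_V)$, these form a regular sequence, so $\mathrm{ST}(\Phi_V,\eta)=S/\operatorname{Jac}(\eta)$. This is a complete intersection with socle degree $\ell(d-2)$, and its socle is spanned by the Hessian $\det(\eta_{ij})$, a classical fact via the Grothendieck residue: $\operatorname{Res}[\det(\eta_{ij})\,dx/\eta_1\cdots\eta_\ell]=\dim S/\operatorname{Jac}(\eta)\neq 0$. Proposition~\ref{double-complex} applied to $\Phi_V$ (where $\tau$ becomes $\ell(d-2)$) identifies $\mathrm{ST}(\Phi_V,\eta)_{\ell(d-2)}$ with $H^\ell_{S_+}(S)_{-\ell}$, through the edge isomorphisms of the two spectral sequences.

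Shifting by $|\mathcal A|$, the comparison map yields a commutative square. On the ST side, the map $\mathrm{ST}(\Phi_V,\eta)_{\ell(d-2)}\to\mathrm{ST}(\mathcal A,\eta)_\tau$ sends $[\det(\eta_{ij})]$ to $[Q(\mathcal A)\det(\eta_{ij})]$. On the local cohomology side, the map is $H^\ell_{S_+}(\operatorname{Der}^\ell S)_{-\ell}\to H^\ell_{S_+}(D^\ell(\mathcal A))_{\tau-\ell(d-1)}$, induced by multiplication by $Q(\mathcal A)$. By \eqref{eq:top-derivation}, $Q(\mathcal A)\colon \operatorname{Der}^\ell S(-|\mathcal A|)\to D^\ell(\mathcal A)$ is an isomorphism of graded modules, so this map is an isomorphism. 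Both vertical identifications are the isomorphisms of Proposition~\ref{double-complex}, natural with respect to maps of double complexes. We verify this by checking that the abutment filtrations and the $E_\infty$-terms used there are compatible: the vanishing statements of Proposition~\ref{prop:HqDp} hold for both arrangements in the relevant degree, and the edge maps from ${}^{I}E$ and ${}^{II}E$ are natural. Commutativity then shows that the ST comparison map in degree $\tau$ is an isomorphism of one-dimensional spaces, and it sends the generator $[\det(\eta_{ij})]$ to $[Q(\mathcal A)\det(\eta_{ij})]$. So the latter is nonzero and spans.

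The main obstacle is the naturality step: ensuring that the isomorphism of Proposition~\ref{double-complex} is induced by edge maps compatible with the chain map. In particular, for $\mathcal A$ the identification passes through $E_\infty^{-\ell,\ell}$ as a subquotient of $H^0(\operatorname{Tot})$, and we must check that the morphism of spectral sequences respects this. I would handle it by noting that for both complexes, in the relevant degree, $H^0(\operatorname{Tot})_u$ equals the last filtration piece $E_\infty^{-\ell,\ell}$ with all other pieces zero, so the induced maps on filtrations coincide with the map on totals. The socle fact for $\Phi_V$ I would simply cite as standard.
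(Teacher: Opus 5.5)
Your proposal is correct and follows the paper's proof essentially step for step. You use the chain map given by multiplication by $Q(\mathcal A)$ from the Solomon--Terao complex of $\Phi_V$, the naturality of the spectral-sequence identifications in Proposition~\ref{double-complex}, the isomorphism on $H^\ell_{S_+}$ coming from $D^\ell(\mathcal A)=Q(\mathcal A)\operatorname{Der}^\ell S$, and the nonvanishing of $[\det(\eta_{ij})]$ in $\mathrm{ST}(\Phi_V,\eta)$; the only differences are that the paper cites this nonvanishing from Abe--Maeno--Murai--Numata (complete intersection plus their Proposition~3.2) rather than deriving it from the residue formula, and that your explicit check of the filtration and edge-map naturality fills in a step the paper asserts without detail.
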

\begin{proof}
Let $\Phi_V$ be the empty arrangement in $V$. Since $V\in L(\mathcal A)$,
we have $\eta\in U_d(\Phi_V)$. Since $\Phi_V$ is free,
$\mathrm{ST}(\Phi_V,\eta)$ is a complete intersection by
\cite[Theorem~1.10]{abe2019solomon}.
Hence \cite[Proposition~3.2]{abe2019solomon} shows that
\begin{equation}
    [\det(\eta_{ij})]\ne0
    \quad\text{in }\mathrm{ST}(\Phi_V,\eta).
    \label{nonzero}
\end{equation}

For every $p$, since $D^p(\Phi_V)=\operatorname{Der}^p S$ and
$Q(\mathcal A)\operatorname{Der}^p S\subseteq D^p(\mathcal A)$,
multiplication by $Q(\mathcal A)$ defines a map
$D^p(\Phi_V)\to D^p(\mathcal A)$.
For $\theta\in D^p(\Phi_V)$, the $S$-linearity of $\partial_p$ gives
$\partial_p(Q(\mathcal A)\theta)=Q(\mathcal A)\partial_p(\theta)$.
Thus multiplication by $Q(\mathcal A)$ defines a homogeneous chain map
of degree $|\mathcal A|$ from the Solomon--Terao complex $(D^\bullet(\Phi_V),\partial_\bullet)$
to $(D^\bullet(\mathcal A),\partial_\bullet)$.

Since $D^\ell(\mathcal A)=Q(\mathcal A)\operatorname{Der}^\ell S$,
multiplication by $Q(\mathcal A)$ induces an isomorphism
\[
    H^\ell_{S_+}(D^\ell(\Phi_V))_{-\ell}
    \xrightarrow{\cdot Q(\mathcal A)}
    H^\ell_{S_+}(D^\ell(\mathcal A))_{|\mathcal A|-\ell}.
\]
The chain map above induces a morphism between the corresponding double complexes and hence morphisms between the associated spectral sequences.
The isomorphisms in Proposition~\ref{double-complex} are therefore compatible with multiplication by $Q(\mathcal A)$.
Thus the induced map
\[
    \mathrm{ST}(\Phi_V,\eta)_{\ell(d-2)}
    \xrightarrow{\cdot Q(\mathcal A)}
    \mathrm{ST}(\mathcal A,\eta)_{|\mathcal A|+\ell(d-2)}
\]
is also an isomorphism.
Hence $[Q(\mathcal A)\det(\eta_{ij})]\neq0$ in $\mathrm{ST}(\mathcal A,\eta)$ by \eqref{nonzero}.
Combining this with $\dim_{\mathbb C}\mathrm{ST}(\mathcal A,\eta)_\tau=1$ from Proposition~\ref{double-complex}, we obtain
\[
    \mathrm{ST}(\mathcal A,\eta)_\tau
    =\mathbb C\cdot[Q(\mathcal A)\det(\eta_{ij})].\qedhere
\]
\end{proof}

Finally, Proposition~\ref{double-complex} and Proposition~\ref{Q(A)det} immediately imply Theorem~\ref{thm:main1}.

\section{An Orlik--Solomon bound for the Solomon--Terao algebra}
\label{sec:ST-OS-bound}

In this section, we prove Theorem~\ref{thm:main2}.
Let $\mathcal A$ be a central hyperplane arrangement in
$V=\mathbb C^\ell$, and fix $\eta\in U_2(\mathcal A)$.

The proof is organized as follows.
We first reduce to the case where $\mathcal A$ is essential of
positive rank. We then use a logarithmic one-form with exactly
$\dim_{\mathbb C}\mathrm{OS}(\mathcal A)$ zeros to construct an ideal
$\mathfrak a_1\subseteq S$ whose zero locus contains these points.
The Chinese remainder theorem gives
$\dim_{\mathbb C}S/\mathfrak a_1\geq
\dim_{\mathbb C}\mathrm{OS}(\mathcal A)$, while comparison with the
degree form ideal of $\mathfrak a_1$ gives
$\dim_{\mathbb C}S/\mathfrak a_1\leq
\dim_{\mathbb C}\mathrm{ST}(\mathcal A,\eta)$, proving the theorem.

For quadratic forms, the definition of $\mathcal A$-nondegeneracy
has the following linear-algebraic reformulation.
For $f\in S_2$, let
\[
    B_f(u,v)
    :=
    \frac{1}{2}
    \bigl(f(u+v)-f(u)-f(v)\bigr)
\]
be its polar symmetric bilinear form.  Thus
\begin{equation}
    f(x)=B_f(x,x),
    \qquad
    (\mathrm{d}f)_x(v)=2B_f(x,v).
    \label{eq:polar-form}
\end{equation}

\begin{lemma}
\label{lem:quadratic-nondegeneracy}
Let $f\in S_2$. Then $f\in U_2(\mathcal A)$ if and only if
the restriction $B_f|_X$ is nondegenerate for every
$X\in L(\mathcal A)$. In particular, if $f\in U_2(\mathcal A)$,
then $B_f$ is nondegenerate on $V$.
\end{lemma}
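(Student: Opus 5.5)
The plan is to unwind the definition of $U_2(\mathcal A)$ for a quadratic form, using the polar identity \eqref{eq:polar-form}, and reduce it to pure linear algebra one flat at a time. Fix $X\in L(\mathcal A)$ and write $g:=f|_X\in S_X$, which is a quadratic form on $X$ with polar form $B_g=B_f|_{X\times X}$. The Jacobian ideal $\operatorname{Jac}_X(g)$ is generated by the values $\theta(g)$ for $\theta\in\operatorname{Der}S_X$. Equivalently, after choosing coordinates on $X$, it is generated by the partial derivatives of $g$, so its zero set in $X$ is the critical locus $\{x\in X\mid (\mathrm dg)_x=0\}$.

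By \eqref{eq:polar-form}, for $x\in X$ we have $(\mathrm dg)_x(v)=2B_f(x,v)$ for all $v\in X$. Thus $x$ is a zero of $\operatorname{Jac}_X(g)$ exactly when $x$ lies in the radical $\{x\in X\mid B_f(x,v)=0\ \text{for all } v\in X\}$ of $B_f|_X$. Nondegeneracy of $g$ on $X$ therefore means that this radical meets $X\setminus\{0\}$ trivially. Since the radical is a linear subspace, this is the same as saying it is zero, which is exactly nondegeneracy of the bilinear form $B_f|_X$.

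Taking this over all $X\in L(\mathcal A)$ gives the claimed equivalence. The final assertion follows by applying it to $X=V\in L(\mathcal A)$, the empty intersection.

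There is no real obstacle here. The only care needed is to check that the Jacobian ideal of $f|_X$ is generated by the derivatives along $X$, which holds because $\operatorname{Der}S_X$ is free on the coordinate derivations of $X$.
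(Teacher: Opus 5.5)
Your proof is correct and follows essentially the same route as the paper: use the polar identity $\mathrm{d}(f|_X)_x(v)=2B_f(x,v)$ for $x,v\in X$ to identify the zero locus of $\operatorname{Jac}_X(f|_X)$ with the radical of $B_f|_X$, then range over all flats. You also spell out two points the paper leaves implicit: a linear subspace avoids $X\setminus\{0\}$ only if it is zero, and the final assertion is the case $X=V\in L(\mathcal A)$.
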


\begin{proof}
Fix $X\in L(\mathcal A)$. For $x,v\in X$,
\[
    \mathrm{d}(f|_X)_x(v)
    =
    (\mathrm{d}f)_x(v)
    =
    2B_f(x,v).
\]
Hence the zero locus of $\operatorname{Jac}_X(f|_X)$ in $X$ is
\[
    \{x\in X:B_f(x,v)=0\text{ for all }v\in X\}.
\]
Therefore $f|_X$ is nondegenerate on $X$ if and only if
$B_f|_X$ is nondegenerate. The result follows by considering all
$X\in L(\mathcal A)$.
\end{proof}

Since $T\in L(\mathcal A)$,
Lemma~\ref{lem:quadratic-nondegeneracy} implies that
$B_\eta|_T$ is nondegenerate. Thus
$V=T\oplus W$ for $W:=T^{\perp_{B_\eta}}$.
By the construction of essentialization in 
Section~\ref{subsec:arrangements-OS}, we identify
$\mathcal A^{\mathrm{ess}}$ with
$\mathcal A_W:=\{H\cap W:H\in\mathcal A\}$.

\begin{lemma}\label{lem:eta-essentialization}
Let $\eta_W:=\eta|_W$. Then
$\eta_W\in U_2(\mathcal A_W)$, and
\[
    \mathrm{ST}(\mathcal A,\eta)
    \cong \mathrm{ST}(\mathcal A_W,\eta_W),
    \qquad
    \mathrm{OS}(\mathcal A)
    \cong \mathrm{OS}(\mathcal A_W).
\]
\end{lemma}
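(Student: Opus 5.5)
We will choose coordinates adapted to the orthogonal decomposition $V=W\oplus T$ and prove the three assertions separately: nondegeneracy of $\eta_W$, the ST isomorphism, and the OS isomorphism. Choose a basis of $V^*$ consisting of linear forms $y_1,\dots,y_r$ vanishing on $T$ (coordinates on $W$) and $z_1,\dots,z_{\ell-r}$ vanishing on $W$ (coordinates on $T$). Then $S=S_W\otimes_{\mathbb C}S_T$. Since $W=T^{\perp_{B_\eta}}$, we have $B_\eta(w,t)=0$ for $w\in W$, $t\in T$. By \eqref{eq:polar-form} this gives
\[
\eta=\eta_W(y)+\eta_T(z),
\]
with $\eta_W\in (S_W)_2$ and $\eta_T\in(S_T)_2$, and $\eta_T$ nondegenerate because $B_\eta|_T$ is.

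\textbf{Nondegeneracy.} Each $X\in L(\mathcal A)$ has the form $X=(X\cap W)\oplus T$, and $X\mapsto X\cap W$ is the bijection $L(\mathcal A)\to L(\mathcal A_W)$. Since $B_\eta$ splits orthogonally, $B_\eta|_X$ is nondegenerate if and only if both $B_\eta|_{X\cap W}$ and $B_\eta|_T$ are. Lemma~\ref{lem:quadratic-nondegeneracy}, applied in both directions, then gives $\eta_W\in U_2(\mathcal A_W)$.

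\textbf{OS isomorphism.} Here $H\mapsto H\cap W$ is a bijection $\mathcal A\to\mathcal A_W$ that preserves codimensions of all intersections, since $\operatorname{codim}_V\bigcap H=\operatorname{codim}_W\bigcap(H\cap W)$. So dependent subsets correspond, and the presentations of $\mathrm{OS}$ agree. Alternatively, $U(\mathcal A)=U(\mathcal A_W)\times T$ is homotopy equivalent to $U(\mathcal A_W)$.

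\textbf{ST isomorphism.} By Lemma~\ref{lem:product-log-derivations}, $D^1(\mathcal A)=\bigl(D^1(\mathcal A_W)\otimes S_T\bigr)\oplus\bigl(S_W\otimes\operatorname{Der}S_T\bigr)$. More concretely, a derivation is logarithmic for $\mathcal A$ iff its $\partial_y$-part, viewed over $S_T$, is logarithmic for $\mathcal A_W$, while the $\partial_z$-part is arbitrary; this holds because each $\alpha_H$ involves only $y$. Applying these to $\eta$ gives
\[
\mathfrak a(\mathcal A,\eta)=\mathfrak a(\mathcal A_W,\eta_W)S+(\partial_{z_1}\eta_T,\dots,\partial_{z_{\ell-r}}\eta_T)S .
\]
Since $\eta_T$ is a nondegenerate quadratic form, the partials $\partial_{z_j}\eta_T$ are linearly independent linear forms in $z$, so they generate $(z_1,\dots,z_{\ell-r})$. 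Therefore
\[
\mathrm{ST}(\mathcal A,\eta)=S/\bigl(\mathfrak a(\mathcal A_W,\eta_W)S+(z)\bigr)\cong S_W/\mathfrak a(\mathcal A_W,\eta_W)=\mathrm{ST}(\mathcal A_W,\eta_W),
\]
an isomorphism of graded algebras.

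The main point needing care is the identification of $D^1(\mathcal A)$ with the tensor decomposition in these adapted coordinates. Lemma~\ref{lem:product-log-derivations} is stated for an arbitrary complement $U$, and $W$ is one such complement. I would verify it directly by writing $\theta=\sum f_i\partial_{y_i}+\sum g_j\partial_{z_j}$ and checking $\theta(\alpha_H)=\sum f_i\partial_{y_i}\alpha_H$. Membership in $\alpha_HS$ is then equivalent, coefficientwise in the $z$-monomials, to membership in $\alpha_HS_W$, because $S$ is free over $S_W$.
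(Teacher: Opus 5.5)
Your proposal is correct and follows essentially the same route as the paper. Nondegeneracy of $\eta_W$ comes from the orthogonal splitting $X=(X\cap W)\oplus T$ and Lemma~\ref{lem:quadratic-nondegeneracy}; the ST isomorphism comes from $\mathfrak a(\mathcal A,\eta)=\mathfrak a(\mathcal A_W,\eta_W)S+\operatorname{Jac}_T(\eta_T)S$, obtained via Lemma~\ref{lem:product-log-derivations}, with $\operatorname{Jac}_T(\eta_T)=(z_1,\dots,z_{\ell-r})$; and the OS isomorphism comes from preservation of dependent subsets, with your direct coordinate check of the $D^1(\mathcal A)$ decomposition usefully making explicit the compatibility with evaluation on $\eta$ that the paper leaves implicit.
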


\begin{proof}
For every $Y\in L(\mathcal A_W)$, the corresponding
flat of $\mathcal A$ is $T\oplus Y$.
Since $B_\eta|_{T\oplus Y}$ and $B_\eta|_T$
are nondegenerate and $T$ is $B_\eta$-orthogonal
to $W$, the restriction $B_\eta|_Y$ is also
nondegenerate. Hence
$\eta_W\in U_2(\mathcal A_W)$ by
Lemma~\ref{lem:quadratic-nondegeneracy}.

Set $\eta_T:=\eta|_T$. The orthogonal decomposition
gives $\eta=\eta_W+\eta_T$ under the identification
$S\cong S_W\otimes_{\mathbb C}S_T$.
Applying the derivations in the decomposition of
Lemma~\ref{lem:product-log-derivations} to $\eta$, we obtain
\[
    \mathfrak a(\mathcal A,\eta)
    =
    \mathfrak a(\mathcal A_W,\eta_W)S
    +
    \operatorname{Jac}_T(\eta_T)S.
\]
Since $\eta_T$ is a nondegenerate quadratic form,
its partial derivatives span $T^*$, so
$S_T/\operatorname{Jac}_T(\eta_T)\cong\mathbb C$.
Consequently,
\[
    \mathrm{ST}(\mathcal A,\eta)
    \cong
    \mathrm{ST}(\mathcal A_W,\eta_W)
    \otimes_{\mathbb C}
    S_T/\operatorname{Jac}_T(\eta_T)
    \cong
    \mathrm{ST}(\mathcal A_W,\eta_W).
\]

Finally, essentialization preserves the dependent subsets of the arrangement,
so the definition of the Orlik--Solomon algebra gives
$\mathrm{OS}(\mathcal A)\cong\mathrm{OS}(\mathcal A_W)$.
\end{proof}

If $r(\mathcal A)=0$, then $\mathcal A=\varnothing$, and
$\mathrm{ST}(\mathcal A,\eta)\cong
\mathrm{OS}(\mathcal A)\cong\mathbb C$.
We may therefore assume that $r(\mathcal A)>0$.
By Lemma~\ref{lem:eta-essentialization}, we may replace
$(\mathcal A,\eta)$ by $(\mathcal A_W,\eta_W)$
without changing either side of the desired inequality.
Thus, for the remainder of the proof of
Theorem~\ref{thm:main2}, we assume that
$\mathcal A$ is essential, so that $T=0$ and
$r=\ell\geq1$.

We now define
\[
    Y_\eta
    :=
    \{x\in U(\mathcal A):\eta(x)=1\}.
\]
Recall that an irreducible complex algebraic variety is \emph{very affine} if it is isomorphic to a closed subvariety of an
algebraic torus $(\mathbb C^*)^N$ for some $N$. We write $\chi(Y)$ for
the topological Euler characteristic of a complex algebraic variety $Y$.

\begin{lemma}
\label{lem:quadric-section-very-affine}
If $r\geq2$, then $Y_\eta$ is a smooth very affine variety of
dimension $r-1$.
\end{lemma}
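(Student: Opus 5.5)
We need to show three things: $Y_\eta$ is smooth, it is irreducible of dimension $r-1$, and it embeds as a closed subvariety of a torus. Recall that $\mathcal A$ is essential, so $r=\ell\ge 2$, and that $B_\eta$ is nondegenerate on $V$ by Lemma~\ref{lem:quadratic-nondegeneracy}.

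\emph{Smoothness and dimension.} The set $\{\eta=1\}\subseteq V$ is a hypersurface. At a point $x$ with $\eta(x)=1$, the Euler relation gives $(\mathrm d\eta)_x(x)=2\eta(x)=2\neq0$, so $\mathrm d\eta$ does not vanish there. Hence $\{\eta=1\}$ is a smooth affine hypersurface of pure dimension $\ell-1$. Since $Y_\eta$ is a nonempty Zariski-open subset of it, $Y_\eta$ is smooth of pure dimension $r-1$. Nonemptiness holds because $\eta\not\equiv0$ on the dense open set $U(\mathcal A)$, and $\eta$ is homogeneous, so any point with $\eta\ne0$ can be rescaled to $\eta=1$.

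\emph{Irreducibility.} This is where $r\ge2$ enters. The quadric $\{\eta=1\}$ in $\mathbb C^\ell$ is given by a nondegenerate quadratic form. For $\ell\ge2$ such a form is irreducible as a polynomial after subtracting $1$: indeed $\eta-1$ is irreducible because its homogenization $\eta-x_0^2$ is a nondegenerate quadratic form in $\ell+1\ge3$ variables. Hence $\{\eta=1\}$ is irreducible. A nonempty open subset of an irreducible variety is irreducible, so $Y_\eta$ is irreducible. For $\ell=1$ the set would be two points, which explains why the hypothesis is needed.

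\emph{Very affineness.} Consider the map
\[
    \iota:Y_\eta\to(\mathbb C^*)^{|\mathcal A|},\qquad x\mapsto(\alpha_H(x))_{H\in\mathcal A}.
\]
Since $\mathcal A$ is essential, the forms $\alpha_H$ span $V^*$, so $\iota$ is the restriction of an injective linear map $V\to\mathbb C^{|\mathcal A|}$. The image of $V$ is a linear subspace $L$, and $\iota$ identifies $U(\mathcal A)$ with $L\cap(\mathbb C^*)^{|\mathcal A|}$, which is closed in the torus. Writing $\eta$ as a polynomial on $L$ and extending it to a polynomial on $\mathbb C^{|\mathcal A|}$ via a linear retraction, $\iota(Y_\eta)$ is cut out inside $L\cap(\mathbb C^*)^{|\mathcal A|}$ by the equation $\eta=1$. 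It is therefore closed in the torus, and $\iota$ is an isomorphism onto its image.

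The only point needing care is irreducibility. With the homogenization argument above it is routine, so no step is a real obstacle.
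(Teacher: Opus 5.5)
Your proof is correct and follows the paper's route: the same torus embedding $x\mapsto(\alpha_H(x))_{H\in\mathcal A}$ gives very affineness, and smoothness and irreducibility of the hypersurface $\{\eta=1\}$ pass to its nonempty open subset $Y_\eta$. You also supply details the paper leaves implicit: the Euler relation for smoothness, the homogenization $\eta-x_0^2$ of rank at least $3$ for irreducibility, and nonemptiness by rescaling a point of $U(\mathcal A)$.
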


\begin{proof}
Since $\eta$ is nondegenerate and $r\geq2$,
the hypersurface $\{\eta=1\}$ is smooth and
irreducible of dimension $r-1$.
Hence its nonempty open subvariety $Y_\eta$
has the same properties.
To show that $Y_\eta$ is very affine, we use a standard
torus embedding. Let $N=|\mathcal A|$ and consider
\[
    \Phi:V\longrightarrow\mathbb C^N,
    \qquad
    x\longmapsto(\alpha_H(x))_{H\in\mathcal A}.
\]
Since $\mathcal A$ is essential,
$\ker\Phi=\bigcap_{H\in\mathcal A}H=0$, so $\Phi$ identifies $V$
with a closed linear subspace of $\mathbb C^N$. Moreover, $\Phi$ identifies $U(\mathcal A)$ with
$\Phi(V)\cap(\mathbb C^*)^N$, which is a closed subvariety of
$(\mathbb C^*)^N$. Since \(Y_{\eta}\)
is closed in $U(\mathcal A)$, it is also a closed subvariety of
$(\mathbb C^*)^N$. Therefore $Y_\eta$ is very affine.
\end{proof}

We next compute the Euler characteristic of $Y_\eta$.

\begin{proposition}
\label{prop:quadric-euler-OS}
We have $(-1)^{r-1}\chi(Y_\eta)
=\dim_{\mathbb C}\mathrm{OS}(\mathcal A)$.
\end{proposition}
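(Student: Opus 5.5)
The plan is to compute $\chi(Y_\eta)$ by stratifying the quadric $\{\eta=1\}\subseteq V$ along the flats of $\mathcal A$ and applying Möbius inversion. Since $r=\ell\ge1$ after the reduction to the essential case, everything then reduces to the Euler characteristic of a smooth affine quadric. Throughout I use that for complex algebraic varieties the topological Euler characteristic equals the compactly supported one. It is therefore additive over finite decompositions into locally closed subvarieties.

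\emph{Step 1 (stratification).} For $X\in L(\mathcal A)$ put
\[
X^\circ:=X\setminus\bigcup_{Y\in L(\mathcal A),\,Y\subsetneq X}Y .
\]
Every point of $V$ lies in exactly one $X^\circ$, namely the intersection of all hyperplanes containing it. Likewise $X=\bigsqcup_{Y\subseteq X}Y^\circ$. Set
\[
f(X):=\chi(\{\eta=1\}\cap X^\circ),\qquad g(X):=\chi(\{\eta|_X=1\}).
\]
Additivity gives $g(X)=\sum_{Y\in L(\mathcal A),\,Y\subseteq X}f(Y)$. Möbius inversion on $L(\mathcal A)$ then yields $f(V)=\sum_{X}\mu(V,X)g(X)$, and $\mu(V,X)$ is exactly the function $\mu(X)$ of the paper. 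Since $V^\circ=U(\mathcal A)$, we get
\[
\chi(Y_\eta)=\sum_{X\in L(\mathcal A)}\mu(X)\,\chi(\{\eta|_X=1\}).
\]

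\emph{Step 2 (local quadrics).} By Lemma~\ref{lem:quadratic-nondegeneracy}, $B_\eta|_X$ is nondegenerate for every $X\in L(\mathcal A)$. So if $k=\dim X\ge1$, then $\{\eta|_X=1\}$ is isomorphic to $\{z_1^2+\cdots+z_k^2=1\}\subseteq\mathbb C^k$. This affine quadric deformation retracts onto the real sphere $S^{k-1}$, being the tangent bundle of $S^{k-1}$. Hence its Euler characteristic is $1+(-1)^{k-1}=1-(-1)^k$. If $k=0$, the set is empty, with Euler characteristic $0=1-(-1)^0$. So in all cases $\chi(\{\eta|_X=1\})=1-(-1)^{\dim X}$.

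\emph{Step 3 (conclusion).} Substituting gives
\[
\chi(Y_\eta)=\sum_X\mu(X)\bigl(1-(-1)^{\dim X}\bigr)=\chi_{\mathcal A}(1)-\chi_{\mathcal A}(-1)=-\chi_{\mathcal A}(-1),
\]
because $\chi_{\mathcal A}(1)=0$ when $r>0$. Multiplying by $(-1)^{r-1}=(-1)^{\ell-1}$ gives $(-1)^{\ell}\chi_{\mathcal A}(-1)$, which equals $\dim_{\mathbb C}\mathrm{OS}(\mathcal A)$ by \eqref{eq:OS-Hilbert-Poincare}. This argument does not need $r\ge2$, so the case $r=1$ is included.

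The main point needing care is Step 1: the inversion must be set up over the right interval, and the Euler characteristic additivity (equivalently, the equality $\chi=\chi_c$ for complex varieties) must be invoked to justify the decomposition. The computation of the affine quadric's homotopy type is standard.
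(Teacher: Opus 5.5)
Your proposal is correct and follows the paper's proof essentially step for step. You stratify $\{\eta=1\}$ by flats, Möbius-invert to get $\chi(Y_\eta)=\sum_X\mu(X)\,\chi(\{x\in X:\eta(x)=1\})$, use that each nondegenerate affine quadric is $TS^{k-1}$ with Euler characteristic $1-(-1)^k$, and conclude via $\chi_{\mathcal A}(1)=0$ and $\pi_{\mathcal A}(1)=(-1)^r\chi_{\mathcal A}(-1)$; the only difference is that you justify additivity through $\chi=\chi_c$ for complex varieties, where the paper cites Fulton for the same fact.
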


\begin{proof}
For $x\in V$, set
\[
    X_x:=\bigcap_{\substack{H\in\mathcal A\\ x\in H}} H.
\]
For every $X\in L(\mathcal A)$, we have the stratification into locally closed subvarieties
\[
    \{x\in X:\eta(x)=1\}
    =
    \bigsqcup_{X\leq Y}
    \{x\in V:\eta(x)=1,\ X_x=Y\},
\]
where the order on $L(\mathcal A)$ is reverse inclusion. By the
set-theoretic additivity of the topological Euler characteristic for
complex algebraic varieties \cite[Section~4.5]{Fulton1993toric},
\[
    \chi\bigl(\{x\in X:\eta(x)=1\}\bigr)
    =
    \sum_{X\leq Y}
    \chi\bigl(\{x\in V:\eta(x)=1,\ X_x=Y\}\bigr).
\]
Applying Möbius inversion and taking $X=V$, we obtain
\[
    \chi(Y_\eta)
    =
    \sum_{X\in L(\mathcal A)}
    \mu(X)\,
    \chi\bigl(\{x\in X:\eta(x)=1\}\bigr),
\]
since $X_x=V$ if and only if $x\in U(\mathcal A)$.

For $k>0$, after a linear change of coordinates, the level set of a
nondegenerate quadratic form on $\mathbb C^k$ is diffeomorphic to
$TS^{k-1}$ \cite[Section~2]{Szoke1999}, hence its Euler characteristic is $1-(-1)^k$.
For $k=0$, the same formula holds trivially. Therefore
\[
\begin{aligned}
    \chi(Y_\eta)
    &=
    \sum_{X\in L(\mathcal A)}
    \mu(X)\bigl(1-(-1)^{\dim X}\bigr) \\
    &=
    \chi_{\mathcal A}(1)-\chi_{\mathcal A}(-1)
    =
    -\chi_{\mathcal A}(-1),
\end{aligned}
\]
where $\chi_{\mathcal A}(1)=0$ since $\mathcal A$ has positive rank.
On the other hand, by \eqref{eq:OS-Hilbert-Poincare},
\[
    \dim_{\mathbb C}\mathrm{OS}(\mathcal A)
    =
    \pi_{\mathcal A}(1)
    =
    (-1)^r\chi_{\mathcal A}(-1),
\]
where the last equality follows from \eqref{eq-Poin} and
$r=\dim V$.
Combining the two identities gives
$(-1)^{r-1}\chi(Y_\eta)
=\dim_{\mathbb C}\mathrm{OS}(\mathcal A)$.
\end{proof}

To turn this Euler characteristic into a point count, we use the
following theorem of Huh. We first recall some relevant definitions. Let $Y$ be a smooth very affine variety of
dimension $m$, and fix a closed embedding
\[
    f=(f_1,\ldots,f_N)\colon Y\hookrightarrow(\mathbb C^*)^N.
\]
For $\mathbf u=(u_1,\ldots,u_N)\in\mathbb Z^N$, the associated
\emph{master function} is defined by
$\varphi_{\mathbf u}:=\prod_{i=1}^N f_i^{u_i}$.
Since $\varphi_{\mathbf u}$ does not vanish on $Y$, its critical
points are precisely the zeros of the logarithmic one-form
\[
    \mathrm{d}\log\varphi_{\mathbf u}
    =
    \sum_{i=1}^N u_i\,\frac{\mathrm{d}f_i}{f_i},
\]
as
$\mathrm{d}\varphi_{\mathbf u}
=\varphi_{\mathbf u}\,\mathrm{d}\log\varphi_{\mathbf u}$.

\begin{theorem}[{\cite[Theorem~1]{huh2013maximum}}]
\label{thm:Huh-ML-degree}
In the setting above, there exists a 
nonzero polynomial $F\in\mathbb C[u_1,\dots,u_N]$ such that, for every $\mathbf u\in \mathbb Z^N$ with $F(\mathbf u)\neq 0$, $\varphi_{\mathbf u}$ has exactly
$(-1)^m\chi(Y)$ critical points on $Y$, all of which are nondegenerate.
\end{theorem}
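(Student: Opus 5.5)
This is \cite[Theorem~1]{huh2013maximum}, quoted as a black box, so the paper does not need to reprove it. If one wanted to reconstruct the argument, the natural route is to turn the critical-point count into a Chern class computation on a good compactification, as follows.

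\emph{Step 1: compactification.} Embed $Y$ in a smooth projective variety $\overline Y$ whose boundary $D=\overline Y\setminus Y$ is a simple normal crossing divisor. One way is to take the closure of $f(Y)$ in a suitable smooth projective toric compactification of $(\mathbb C^*)^N$ and then resolve singularities. Each $f_i$ is a unit on $Y$, so it has zeros and poles only along $D$. Hence each $\mathrm{d}f_i/f_i$ extends to a global section of the logarithmic cotangent bundle $\Omega^1_{\overline Y}(\log D)$, which has rank $m$. The form $\mathrm{d}\log\varphi_{\mathbf u}$ is then the global section $\omega_{\mathbf u}=\sum_i u_i\,\mathrm{d}f_i/f_i$ of this bundle. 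Its zeros in $Y$ are exactly the critical points of $\varphi_{\mathbf u}$.

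\emph{Step 2: global generation and Bertini.} The key claim is that, for a well-chosen $\overline Y$, the sections $\mathrm{d}f_i/f_i$ generate $\Omega^1_{\overline Y}(\log D)$ at every point.
\begin{itemize}
\item On $Y$ this holds because $f$ is a closed embedding, so the differentials $\mathrm{d}f_i$ span $T^*Y$ at each point.
\item Along $D$ it must be arranged by the choice of compactification, by taking the toric compactification compatible with the tropical fan of $Y$ so that the boundary strata are "tropical". This is the step I expect to be the main obstacle.
\end{itemize}
If generation along $D$ fails, I would instead bound the bad locus. One can show that the set of $\mathbf u$ for which $\omega_{\mathbf u}$ acquires a zero on $D$ has positive codimension in $\mathbb C^N$, using a dimension count on the incidence variety over each boundary stratum.

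Once global generation holds, a Kleiman--Bertini argument for the evaluation map $\mathcal O^N\to\Omega^1_{\overline Y}(\log D)$ gives the following for $\mathbf u$ outside a proper Zariski-closed subset of $\mathbb C^N$. The zero scheme of $\omega_{\mathbf u}$ is reduced, consists of $c_m(\Omega^1_{\overline Y}(\log D))$ points, and avoids $D$, since the expected dimension of zeros on $D$ is $(m-1)-m<0$. Reducedness of a zero means the Hessian of $\varphi_{\mathbf u}$ is nondegenerate there.

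\emph{Step 3: Gauss--Bonnet.} The logarithmic Gauss--Bonnet formula gives
\[
\int_{\overline Y}c_m\bigl(\Omega^1_{\overline Y}(\log D)\bigr)=(-1)^m\chi(Y).
\]
This follows, for instance, by inclusion--exclusion over the strata of $D$, using the residue exact sequences. Finally, the bad set of $\mathbf u$ is a proper Zariski-closed subset of $\mathbb C^N$. It is therefore contained in the zero set of some nonzero polynomial $F$, and $\mathbb Z^N$ is Zariski dense in $\mathbb C^N$. This yields the statement for integer $\mathbf u$ with $F(\mathbf u)\ne0$.
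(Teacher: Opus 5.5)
Your first sentence is exactly how the paper treats this statement: it is quoted from Huh and not reproved. As a reconstruction, however, your outline has a genuine gap at the step you flagged.

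\textbf{The primary route.} Generation of $\Omega^1_{\overline Y}(\log D)$ by the $\mathrm{d}f_i/f_i$ along $D$ is a sch\"on-type condition. It holds for tropical compactifications of sch\"on $Y$, but nothing in the hypotheses lets you arrange it. For example, let $D_j$ be a boundary component on which every Laurent monomial in the $f_i$ of order zero along $D_j$ is constant. Then at points of $D_j$ off the other components, the $\mathrm{d}f_i/f_i$ span at most a line in the $m$-dimensional fibre.

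\textbf{The fallback.} It does not repair this. Without generation, the fibre of the incidence variety $\{(p,\mathbf u):\omega_{\mathbf u}(p)=0\}$ over a boundary point $p$ is the kernel of an evaluation map $\mathbb C^N\to\Omega^1_{\overline Y}(\log D)_p$ whose rank may be very small. A dimension count then no longer shows that the image in $\mathbb C^N$ is proper.

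\textbf{Closedness is never used.} Your argument uses only that $f$ is an immersion, never that $f(Y)$ is \emph{closed} in $(\mathbb C^*)^N$. Yet the statement fails without closedness. For $Y=\mathbb C^*\setminus\{1\}\subset\mathbb C^*$ one has $(-1)^1\chi(Y)=1$, but $x^u$ has no critical points for $u\neq0$. On $\overline Y=\mathbb P^1$ with $D=\{0,1,\infty\}$, the form $u\,\mathrm{d}x/x$ vanishes at the boundary point $1$ for every $u$.

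\textbf{The missing idea} is a residue argument, which makes generation along $D$ unnecessary.
The residue of $\omega_{\mathbf u}$ along $D_j$ is the constant $\sum_i u_i\operatorname{ord}_{D_j}(f_i)$. Since $\operatorname{res}_{D_j}\colon\Omega^1_{\overline Y}(\log D)|_{D_j}\to\mathcal O_{D_j}$ is a surjection of vector bundles, $\omega_{\mathbf u}$ has no zero on $D_j$ once this constant is nonzero.

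The vector $(\operatorname{ord}_{D_j}f_i)_i$ is nonzero precisely because $f(Y)$ is closed. Otherwise $f$ would extend over a general point of $D_j$ with values in the closed set $f(Y)$. This would give a retraction of $Y\cup D_j^\circ$ (for some nonempty open $D_j^\circ\subseteq D_j$) onto its dense open subset $Y$, which separatedness forbids.

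Hence, for $\mathbf u$ off finitely many hyperplanes, the zero scheme of $\omega_{\mathbf u}$ is a closed subscheme of $\overline Y$ lying in the affine variety $Y$, so it is finite. By your Step~3 its length is $\int_{\overline Y}c_m(\Omega^1_{\overline Y}(\log D))=(-1)^m\chi(Y)$.

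Reducedness for general $\mathbf u$ then only requires generic smoothness of the projection to $\mathbb C^N$ of the incidence variety over $Y$. That incidence variety is smooth: it is a rank-$(N-m)$ vector bundle over $Y$, because the $\mathrm{d}f_i/f_i$ generate $\Omega^1_Y$. With this in place of Step~2, the rest of your outline goes through.
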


Applying Theorem~\ref{thm:Huh-ML-degree} to $Y_\eta$ gives the
following.

\begin{proposition}
\label{prop:log-form-zeros}
There exist weights $\lambda_H\in\mathbb Q$, $H\in\mathcal A$, satisfying
$\sum_{H\in\mathcal A}\lambda_H=2$ such that, for the logarithmic
one-form
\[
    \omega
    :=
    \sum_{H\in\mathcal A}
    \lambda_H\frac{\mathrm{d}\alpha_H}{\alpha_H}
\]
defined on $U(\mathcal A)$, the restriction $\omega|_{Y_\eta}$ has exactly
$\dim_{\mathbb C}\mathrm{OS}(\mathcal A)$ distinct zeros.
\end{proposition}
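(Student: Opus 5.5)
We will apply Theorem~\ref{thm:Huh-ML-degree} to the closed embedding of $Y_\eta$ into a torus that records both the linear forms and $\eta$. The case $r=1$ is handled separately; we treat $r\ge2$ first. In that case Lemma~\ref{lem:quadric-section-very-affine} shows $Y_\eta$ is smooth very affine of dimension $m=r-1$, and the map $\Phi$ gives a closed embedding $f=(\alpha_H)_{H\in\mathcal A}\colon Y_\eta\hookrightarrow(\mathbb C^*)^N$ with $N=|\mathcal A|$. Theorem~\ref{thm:Huh-ML-degree} then yields a nonzero polynomial $F\in\mathbb C[u_H]$. For every integer vector $\mathbf u$ with $F(\mathbf u)\ne0$, the one-form $\mathrm{d}\log\varphi_{\mathbf u}|_{Y_\eta}=\sum_H u_H\,\mathrm{d}\alpha_H/\alpha_H|_{Y_\eta}$ has exactly $(-1)^{r-1}\chi(Y_\eta)$ zeros. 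By Proposition~\ref{prop:quadric-euler-OS}, this number equals $\dim_{\mathbb C}\mathrm{OS}(\mathcal A)$.

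The remaining issue is the normalization $\sum\lambda_H=2$. We need some $\mathbf u\in\mathbb Z^N$ with $F(\mathbf u)\neq0$ and $s:=\sum_H u_H\neq0$. The set of integer points avoiding the hypersurface $\{F=0\}\cup\{\sum u_H=0\}$ is nonempty, because a nonzero polynomial $F\cdot\sum u_H$ cannot vanish on all of $\mathbb Z^N$. We then set $\lambda_H:=2u_H/s\in\mathbb Q$. Rescaling a one-form by the nonzero constant $2/s$ does not change its zero set, so $\omega|_{Y_\eta}$ still has exactly $\dim_{\mathbb C}\mathrm{OS}(\mathcal A)$ distinct zeros. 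Nondegenerate critical points are in particular isolated and distinct, which gives the word ``distinct'' in the statement.

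Now the case $r=1$, where $\ell=1$, $\mathcal A=\{H\}$ with $H=\{0\}$, and $\eta=cx^2$ with $c\ne0$. Here $Y_\eta$ consists of two points and $\omega|_{Y_\eta}=0$ trivially, since $Y_\eta$ is zero-dimensional. Thus $\omega|_{Y_\eta}$ has exactly two zeros, which matches $\dim\mathrm{OS}(\mathcal A)=2$ and also $(-1)^0\chi(Y_\eta)=2$ from Proposition~\ref{prop:quadric-euler-OS}. Any weights work, for instance $\lambda_H=2$. Lemma~\ref{lem:quadric-section-very-affine} excluded $r=1$ only because irreducibility fails, and that is why this case must be checked by hand.

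We expect the main obstacle to be verifying the hypotheses of Theorem~\ref{thm:Huh-ML-degree}: irreducibility of $Y_\eta$ and applicability to the particular embedding $f$, since the theorem concerns master functions of a fixed closed embedding. Both are supplied by Lemma~\ref{lem:quadric-section-very-affine}, so the genuine content reduces to choosing $\mathbf u$ off the hypersurface $\{F\cdot\sum u_H=0\}$.
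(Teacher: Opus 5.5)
Your proof is correct and follows the paper's argument essentially verbatim: for $r\ge2$ you apply Huh's theorem to the embedding $x\mapsto(\alpha_H(x))_{H\in\mathcal A}$, choose $\mathbf u$ with $F(\mathbf u)\neq0$ and $\sum_H u_H\neq0$, rescale by $2/\sum_H u_H$, and you handle $r=1$ by hand, exactly as the paper does. Your explicit justification that such $\mathbf u$ exists (the nonzero polynomial $F\cdot\sum_H u_H$ cannot vanish on all of $\mathbb Z^N$) fills in a step the paper leaves implicit; only your opening phrase about an embedding that also records $\eta$ is inconsistent with what you actually use, and it is unnecessary.
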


\begin{proof}
Assume first that $r\geq2$. By
Lemma~\ref{lem:quadric-section-very-affine}, $Y_\eta$ is a smooth
very affine variety of dimension $r-1$.
Applying Theorem~\ref{thm:Huh-ML-degree} to the embedding from
Lemma~\ref{lem:quadric-section-very-affine}, we may choose
$\mathbf u=(u_H)_{H\in\mathcal A}\in\mathbb Z^N$, where
$N:=|\mathcal A|$, such that
$c:=\sum_{H\in\mathcal A}u_H\neq0$ and the logarithmic one-form
\[
    \widetilde{\omega}
    :=
    \sum_{H\in\mathcal A}
    u_H\frac{\mathrm{d}\alpha_H}{\alpha_H}
\]
has exactly
\((-1)^{r-1}\chi(Y_\eta)=\dim_{\mathbb C}\mathrm{OS}(\mathcal A)\)
distinct zeros on $Y_\eta$, where the equality follows from
Proposition~\ref{prop:quadric-euler-OS}.

Set $\lambda_H:=2u_H/c\in\mathbb Q$. Then
$\sum_H\lambda_H=2$, and the corresponding one-form satisfies
$\omega=(2/c)\widetilde{\omega}$. Hence
$\omega|_{Y_\eta}$ and $\widetilde{\omega}|_{Y_\eta}$ have the same
zero set, proving the proposition for $r\geq2$.

If $r=1$, then $Y_\eta$ consists of two
points and $\dim_{\mathbb C}\mathrm{OS}(\mathcal A)=2$. Taking $\lambda_H=2$ gives $\omega|_{Y_\eta}=0$, since $Y_\eta$ is
$0$-dimensional. The proposition follows.
\end{proof}

For the remainder of the proof of Theorem~\ref{thm:main2},
we fix such weights and the corresponding logarithmic
one-form $\omega$.

We now construct an ideal whose zero locus contains the zeros of
$\omega|_{Y_\eta}$. For $\theta\in D^1(\mathcal A)$, logarithmicity gives
$\theta(\alpha_H)/\alpha_H\in S$ for every $H\in\mathcal A$.
Thus evaluating $\omega$ on $\theta$ yields a polynomial
\[
    \omega(\theta)
    :=
    \sum_{H\in\mathcal A}
    \lambda_H\frac{\theta(\alpha_H)}{\alpha_H}
    \in S.
\]
Define
\[
    \mathfrak a_1
    :=
    \bigl(
        \theta(\eta)-\omega(\theta):
        \theta\in D^1(\mathcal A)
    \bigr)
    \subseteq S.
\]

\begin{proposition}
\label{prop:auxiliary-ideal-zeros}
The zero locus of $\mathfrak a_1$ contains at least
$\dim_{\mathbb C}\mathrm{OS}(\mathcal A)$ distinct points.
\end{proposition}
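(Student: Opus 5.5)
The plan is to show that every zero $p\in Y_\eta$ of $\omega|_{Y_\eta}$ from Proposition~\ref{prop:log-form-zeros} lies in the zero locus of $\mathfrak a_1$. Since these zeros are $\dim_{\mathbb C}\mathrm{OS}(\mathcal A)$ distinct points, this proves the proposition.

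Fix such a point $p$. Since $p\in U(\mathcal A)$, every $\alpha_H(p)\neq0$, so each summand $\theta(\alpha_H)/\alpha_H$ of $\omega(\theta)$ may be evaluated as a genuine quotient at $p$. We write $\theta_p\in T_pV=V$ for the value of the vector field $\theta$ at $p$. Then
\[
    \theta(\eta)(p)=(\mathrm d\eta)_p(\theta_p),
    \qquad
    \omega(\theta)(p)=\omega_p(\theta_p).
\]
Hence $\bigl(\theta(\eta)-\omega(\theta)\bigr)(p)=(\mathrm d\eta-\omega)_p(\theta_p)$, and it suffices to prove that the covector $(\mathrm d\eta-\omega)_p\in V^*$ vanishes identically. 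We will not need any statement about which tangent vectors arise as $\theta_p$ for $\theta\in D^1(\mathcal A)$.

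The key step is to decompose $V=T_pY_\eta\oplus\mathbb C p$. Because $\eta$ is homogeneous of degree $2$, Euler's identity gives $(\mathrm d\eta)_p(p)=2\eta(p)=2$. In particular $(\mathrm d\eta)_p\neq0$, so $T_pY_\eta=\ker(\mathrm d\eta)_p$ is a hyperplane not containing $p$, and the decomposition holds.

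We check the vanishing on each summand.
\begin{enumerate}
\item On $T_pY_\eta$: here $(\mathrm d\eta)_p$ vanishes by definition of the tangent space, and $\omega_p$ vanishes because $p$ is a zero of $\omega|_{Y_\eta}$.
\item On $p$: since each $\alpha_H$ is linear, $(\mathrm d\alpha_H)_p(p)/\alpha_H(p)=1$. Therefore $\omega_p(p)=\sum_H\lambda_H=2=(\mathrm d\eta)_p(p)$.
\end{enumerate}
So $(\mathrm d\eta-\omega)_p=0$, and every generator of $\mathfrak a_1$ vanishes at $p$.

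In the case $r=1$, $Y_\eta$ is $0$-dimensional, so $T_pY_\eta=0$ and the condition $\omega|_{Y_\eta}=0$ is vacuous. The second check alone then covers all of $V=\mathbb C p$. The only delicate point in the whole argument is remembering that the normalization $\sum_H\lambda_H=2$ is exactly what matches Euler's identity in the radial direction; there is no real obstacle.
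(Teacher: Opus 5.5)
Your proposal is correct and follows essentially the same argument as the paper: the paper notes that $\omega_y$ vanishes on $T_yY_\eta=\ker(\mathrm{d}\eta)_y$, so $\omega_y=c\,(\mathrm{d}\eta)_y$, and then evaluates both forms on the Euler vector field to get $c=1$ from $\sum_H\lambda_H=2=2\eta(y)$, which is exactly your check on the two summands of $V=T_pY_\eta\oplus\mathbb C p$. Your explicit justification that $(\mathrm{d}\eta)_p\neq0$ and your separate remark on the case $r=1$ are minor points the paper leaves implicit.
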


\begin{proof}
Let $y\in Y_\eta$ be a zero of $\omega|_{Y_\eta}$. Since
$Y_\eta$ is an open subvariety of $\{\eta=1\}$, we have
$T_yY_\eta=\ker(\mathrm{d}\eta)_y$, and hence
$\omega_y=c(\mathrm{d}\eta)_y$ for some $c\in\mathbb C$.
To determine $c$, let 
$E=\sum_{i=1}^r x_i\partial_{x_i}$ 
be the
Euler vector field. Since each $\alpha_H$ is linear,
\[
    \omega_y(E_y)=\sum_H\lambda_H=2,
    \qquad
    (\mathrm d\eta)_y(E_y)=2\eta(y)=2.
\]
Thus $c=1$, and hence
$\omega_y=(\mathrm d\eta)_y$.
For every $\theta\in D^1(\mathcal A)$, viewing $\theta$ as a
polynomial vector field, we have
\[
    \theta(\eta)(y)-\omega(\theta)(y)
    =
    \bigl((\mathrm{d}\eta)_y-\omega_y\bigr)(\theta(y))
    =
    0.
\]
Thus $y$ is a common zero of $\mathfrak a_1$, and the result follows
from Proposition~\ref{prop:log-form-zeros}.
\end{proof}

This point count gives the following estimate by the Chinese remainder theorem.

\begin{proposition}
\label{prop:OS-lower-bound}
We have $\dim_{\mathbb C}S/\mathfrak a_1
\geq\dim_{\mathbb C}\mathrm{OS}(\mathcal A)$.
\end{proposition}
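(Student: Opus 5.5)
The plan is to use the Chinese remainder theorem on the finitely many points supplied by Proposition~\ref{prop:auxiliary-ideal-zeros}. Let $y_1,\ldots,y_N$ be $N:=\dim_{\mathbb C}\mathrm{OS}(\mathcal A)$ distinct common zeros of $\mathfrak a_1$. Let $\mathfrak m_i\subseteq S$ be the maximal ideal of $y_i$. Then $\mathfrak a_1\subseteq\mathfrak m_i$ for every $i$, so $\mathfrak a_1\subseteq\mathfrak m_1\cap\cdots\cap\mathfrak m_N$. This gives a surjection $S/\mathfrak a_1\twoheadrightarrow S/(\mathfrak m_1\cap\cdots\cap\mathfrak m_N)$.

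The ideals $\mathfrak m_i$ are pairwise distinct maximal ideals, hence pairwise comaximal. The Chinese remainder theorem therefore gives
\[
    S/(\mathfrak m_1\cap\cdots\cap\mathfrak m_N)\cong\prod_{i=1}^N S/\mathfrak m_i\cong\mathbb C^N.
\]
Combining this with the surjection yields $\dim_{\mathbb C}S/\mathfrak a_1\geq N$.

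The statement allows $\dim_{\mathbb C}S/\mathfrak a_1$ to be infinite, and in that case the inequality holds trivially. Since $\mathfrak a_1$ is not homogeneous, no grading argument is needed. More concretely, one can evaluate at the $N$ points, $S\to\mathbb C^N$, $g\mapsto(g(y_i))_i$. This map kills $\mathfrak a_1$, and it is surjective because Lagrange-type interpolation polynomials separate finitely many distinct points of $\mathbb C^\ell$.

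There is no serious obstacle here. The only step to check is that the points are distinct, which Proposition~\ref{prop:log-form-zeros} guarantees, and that they are genuinely common zeros of all of $\mathfrak a_1$, which is Proposition~\ref{prop:auxiliary-ideal-zeros}.
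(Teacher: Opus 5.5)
Your proof is correct and is essentially the paper's argument. You take $\dim_{\mathbb C}\mathrm{OS}(\mathcal A)$ distinct common zeros of $\mathfrak a_1$ from Proposition~\ref{prop:auxiliary-ideal-zeros}, note that $\mathfrak a_1$ lies in the intersection of the corresponding pairwise comaximal maximal ideals, and apply the Chinese remainder theorem to obtain a surjection $S/\mathfrak a_1\twoheadrightarrow\mathbb C^N$; your evaluation-map reformulation is the same argument made explicit.
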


\begin{proof}
By Proposition~\ref{prop:auxiliary-ideal-zeros}, the zero
locus of $\mathfrak a_1$ contains at least
$\dim_{\mathbb C}\mathrm{OS}(\mathcal A)$ distinct points. Let $k=\dim_{\mathbb C}\mathrm{OS}(\mathcal A)$ and choose
$k$ such points $x_1,\ldots,x_k$. Let
$\mathfrak m_{x_i}\subset S$ be the corresponding maximal ideals, so \(\mathfrak a_1\subseteq\bigcap_{i=1}^k\mathfrak m_{x_i}\).
Since the ideals $\mathfrak m_{x_i}$ are pairwise coprime, the Chinese
remainder theorem gives a surjection
\[
    S/\mathfrak a_1
    \twoheadrightarrow
    \bigoplus_{i=1}^k S/\mathfrak m_{x_i}
    \cong
    \mathbb C^k.
\]
Hence
$\dim_{\mathbb C}S/\mathfrak a_1
\geq k=\dim_{\mathbb C}\mathrm{OS}(\mathcal A)$.
\end{proof}

We now show that
\[
    \dim_{\mathbb C}S/\mathfrak a_1
    \leq
    \dim_{\mathbb C}\mathrm{ST}(\mathcal A,\eta)
\]
by comparing the Solomon--Terao ideal with the degree form ideal of
$\mathfrak a_1$.
For a nonzero polynomial $f\in S$, write it as a sum of homogeneous
components,
\[
    f=f_0+f_1+\cdots+f_d,
    \qquad
    f_i\in S_i,\quad f_d\neq0.
\]
Its \emph{degree form} is $\mathrm{DF}(f):=f_d$. For an ideal
$I\subseteq S$, define its \emph{degree form ideal} by
\[
    \mathrm{DF}(I)
    :=
    \bigl(
        \mathrm{DF}(f):0\neq f\in I
    \bigr).
\]

\begin{proposition}
\label{prop:degree-comparison}
We have $\dim_{\mathbb C}S/\mathfrak a_1
\leq\dim_{\mathbb C}\mathrm{ST}(\mathcal A,\eta)$.
\end{proposition}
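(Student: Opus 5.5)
We compare $\mathfrak a_1$ with its degree form ideal $\mathrm{DF}(\mathfrak a_1)$. The standard fact we will use is that for any ideal $I\subseteq S$ there is the equality
\[
    \dim_{\mathbb C}S/I=\dim_{\mathbb C}S/\mathrm{DF}(I),
\]
with both sides possibly infinite. To see this, filter $S/I$ by the images of $S_{\le k}$. The associated graded module is $S/\mathrm{DF}(I)$, since the degree-$k$ piece $S_{\le k}/(S_{\le k-1}+I\cap S_{\le k})$ identifies with $S_k/\mathrm{DF}(I)_k$. Given this, it suffices to prove the inclusion
\[
    \mathfrak a(\mathcal A,\eta)\subseteq\mathrm{DF}(\mathfrak a_1).
\]
From it we get $\dim S/\mathrm{DF}(\mathfrak a_1)\le\dim S/\mathfrak a(\mathcal A,\eta)=\dim\mathrm{ST}(\mathcal A,\eta)$, and the right-hand side is finite by Proposition~\ref{prop:ST-finite-homology}.

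To prove the inclusion, take a homogeneous $\theta\in D^1(\mathcal A)$ of degree $k$, meaning its coefficients have degree $k$. Then $\theta(\eta)$ is homogeneous of degree $k+1$, because $\eta\in S_2$. Each summand $\theta(\alpha_H)/\alpha_H$ has degree $k-1$, so $\omega(\theta)$ is homogeneous of degree $k-1$. Hence the degree form of $\theta(\eta)-\omega(\theta)$ is $\theta(\eta)$ whenever $\theta(\eta)\neq0$, and such $\theta(\eta)$ therefore lies in $\mathrm{DF}(\mathfrak a_1)$; the case $\theta(\eta)=0$ is trivial. Since $D^1(\mathcal A)$ is a graded module, it is generated by homogeneous elements, and their images $\theta(\eta)$ generate $\mathfrak a(\mathcal A,\eta)$. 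This gives the inclusion. Nothing here uses the particular weights $\lambda_H$; what matters is that the correction term $\omega(\theta)$ has degree strictly below $\deg\theta(\eta)$.

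The main obstacle is stating the equality $\dim S/I=\dim S/\mathrm{DF}(I)$ carefully in the infinite-dimensional setting. Finiteness of $\dim S/\mathrm{DF}(\mathfrak a_1)$ follows from the inclusion above, and the filtration argument then transfers this finiteness and the dimension to $S/\mathfrak a_1$. For the degree-$k$ identification, we check that $I\cap S_{\le k}$ maps onto $\mathrm{DF}(I)_k$ under taking the top component. This holds because $\mathrm{DF}(I)_k$ consists exactly of the degree forms of elements of $I$ of degree $k$ together with $0$. Indeed, if $g\in S_j$ and $f\in I$, then $\mathrm{DF}(gf)=g\,\mathrm{DF}(f)$, and a sum of degree-$k$ degree forms is either again the degree form of the corresponding sum in $I$ or cancels to a lower degree. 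Summing over $k$ gives $\dim S/\mathfrak a_1=\dim S/\mathrm{DF}(\mathfrak a_1)\le\dim\mathrm{ST}(\mathcal A,\eta)$.
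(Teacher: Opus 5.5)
Your proposal is correct and follows the paper's proof. Both prove the inclusion $\mathfrak a(\mathcal A,\eta)\subseteq\mathrm{DF}(\mathfrak a_1)$ by the same degree count on homogeneous $\theta$, and then use $\dim_{\mathbb C}S/\mathfrak a_1=\dim_{\mathbb C}S/\mathrm{DF}(\mathfrak a_1)$. The only difference is that you prove this equality directly via the degree filtration, which also holds when both sides are infinite, whereas the paper first deduces that $S/\mathrm{DF}(\mathfrak a_1)$ is finite-dimensional and then cites Kreuzer--Robbiano, Theorem~4.3.22.
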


\begin{proof}
Since $D^1(\mathcal A)$ is graded, $\mathfrak a(\mathcal A,\eta)$ is
generated by $\theta(\eta)$ with $\theta\in D^1(\mathcal A)$ homogeneous.
If $\theta$ is homogeneous of degree $e$ and $\theta(\eta)\neq0$,
then $\theta(\eta)$ is homogeneous of degree $e+1$, while
$\omega(\theta)$ is either zero or homogeneous of degree $e-1$.
Hence
\[
    \mathrm{DF}
    \bigl(\theta(\eta)-\omega(\theta)\bigr)
    =
    \theta(\eta).
\]
Since $\theta(\eta)-\omega(\theta)\in\mathfrak a_1$, it follows that
\(\mathfrak a(\mathcal A,\eta)\subseteq\mathrm{DF}(\mathfrak a_1)\).

By Proposition~\ref{prop:ST-finite-homology},
$S/\mathfrak a(\mathcal A,\eta)=\mathrm{ST}(\mathcal A,\eta)$ is
finite-dimensional, so the inclusion above implies that
$S/\mathrm{DF}(\mathfrak a_1)$ is finite-dimensional.
By \cite[Theorem~4.3.22]{KreuzerRobbiano2005}, applied to the
standard grading,
\[
    \dim_{\mathbb C}S/\mathfrak a_1
    =
    \dim_{\mathbb C}S/\mathrm{DF}(\mathfrak a_1).
\]
Therefore
\[
    \dim_{\mathbb C}S/\mathfrak a_1
    \leq
    \dim_{\mathbb C}S/\mathfrak a(\mathcal A,\eta)
    =
    \dim_{\mathbb C}\mathrm{ST}(\mathcal A,\eta).\qedhere
\]
\end{proof}

Finally, Proposition~\ref{prop:OS-lower-bound} and
Proposition~\ref{prop:degree-comparison} immediately imply
Theorem~\ref{thm:main2}.

\begin{remark}\label{rem:essential-higher-degree}
The preceding arguments extend to higher degrees for essential arrangements. More precisely, if $\mathcal A$ is essential in $\mathbb C^\ell$, then for $d\geq2$ and $\eta\in U_d(\mathcal A)$,
\[
    \dim_{\mathbb C}\mathrm{ST}(\mathcal A,\eta)
    \geq (-1)^\ell\chi_{\mathcal A}(1-d).
\]
Indeed, one can show that $\chi(Y_\eta)=-\chi_{\mathcal A}(1-d)$. With $\sum_H\lambda_H=d$, the same critical-point argument and degree-form comparison yield the desired inequality. 
\end{remark}

\begin{remark}\label{rem:ST-OS-strict}
The inequality in Theorem~\ref{thm:main2} can be strict. For example,
let $G=K_{2,2,2}$ be the complete tripartite graph on six vertices,
and let $\mathcal A_G$ be its graphic arrangement. For
$\eta=x_1^2+\cdots+x_6^2$, a SageMath computation gives
\[
    \dim_{\mathbb C}\mathrm{ST}(\mathcal A_G,\eta)
    =
    427
    >
    426
    =
    \dim_{\mathbb C}\mathrm{OS}(\mathcal A_G).
\]
\end{remark}

\begin{remark}\label{rem:ST-OS-equality-nontame}
By Corollary~\ref{coro-ST=OS}, if $\mathcal A$ is tame, then
\(\dim_{\mathbb C}\mathrm{ST}(\mathcal A,\eta)=
\dim_{\mathbb C}\mathrm{OS}(\mathcal A)\)
for every $\eta\in U_2(\mathcal A)$. A naive question is whether the
converse holds, that is, whether this equality implies tameness.

However, the converse is false. Let $\mathcal A$ be the arrangement
in $V=\mathbb C^5$ with defining polynomial
\[
    Q(\mathcal A)
    =
    x_1x_2x_3x_4x_5
    \prod_{\substack{
        (\varepsilon_2,\ldots,\varepsilon_5)\in\{\pm1\}^4\\
        (\varepsilon_2,\ldots,\varepsilon_5)\neq(1,1,1,1)
    }}
    (x_1+\varepsilon_2x_2+\cdots+\varepsilon_5x_5).
\]
This is a deletion from the Edelman--Reiner arrangement. For every
$\eta\in U_2(\mathcal A)$, in particular for
$\eta=x_1^2+\cdots+x_5^2$, one has
\[
    \dim_{\mathbb C}\mathrm{ST}(\mathcal A,\eta)
    =
    \dim_{\mathbb C}\mathrm{OS}(\mathcal A)
    =
    2222.
\]
However, $\operatorname{pd}_S D^4(\mathcal A)=3>5-4$, so
$\mathcal A$ is not tame. We leave the verification of these
claims to the reader.
\end{remark}

\section{Equivalence between tameness and ST-finiteness}

By Proposition~\ref{prop-vanish}, tameness implies
ST-finiteness. In this section, we prove the converse,
completing the proof of Theorem~\ref{thm:main3}.
The key step is a chain isomorphism involving a Koszul
complex, which yields a decomposition of Tor groups.
This decomposition allows us to recover the
projective-dimension bounds defining tameness.

Fix $d\ge2$ and $\eta\in U_d(\mathcal A)$, and set
\[
    z_i:=\frac{\partial\eta}{\partial x_i}
    \quad (1\le i\le\ell),\qquad
    J_\eta:=(z_1,\ldots,z_\ell),\qquad
    R_\eta:=S/J_\eta.
\]
Let $S^\ell=\bigoplus_{i=1}^\ell Se_i$, and let
$K_\bullet=K_\bullet(z_1,\ldots,z_\ell;S)$ denote the
\emph{Koszul complex} associated with $z_1,\ldots,z_\ell$
\cite[Tag 0621]{stacks-project}.
Its terms are
\[
    K_q=\bigwedge\nolimits_S^q S^\ell
    \qquad (0\le q\le\ell),
\]
with $K_q=0$ otherwise. The differential
$\dd_K:K_q\to K_{q-1}$ is given by
\[
\dd_K(e_{i_1}\wedge\cdots\wedge e_{i_q})
=\sum_{r=1}^q(-1)^{r-1}z_{i_r}
e_{i_1}\wedge\cdots\wedge
\widehat{e_{i_r}}\wedge\cdots\wedge e_{i_q},
\]
where the hat denotes omission. In particular,
$K_0=S$ and $\dd_K^2=0$.

\begin{lemma}\label{resolutionfree}
The following statements hold.
\begin{enumerate}[label=(\roman*)]
    \item\label{lem:resolutionfree:regular}
    The sequence $z_1,\ldots,z_\ell$ is a regular sequence of $S$;

    \item\label{lem:resolutionfree:length}
    The ideal $J_\eta$ is contained in
    $\mathfrak m:=(x_1,\ldots,x_\ell)$, and
    $R_\eta$ is a nonzero graded $S$-module
    of finite length;

    \item\label{lem:resolutionfree:koszul}
    The Koszul complex $K_\bullet$ is a free resolution
    of $R_\eta$ of length $\ell$.
\end{enumerate}
\end{lemma}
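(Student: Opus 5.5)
The plan is to extract all three statements from the single input $\eta\in U_d(\mathcal A)$, used only through the flat $V\in L(\mathcal A)$. Since $V$ is a flat, the definition of $U_d(\mathcal A)$ says that $\eta$ is nondegenerate on $V$. Concretely, the Jacobian ideal $\operatorname{Jac}_V(\eta)=J_\eta=(z_1,\ldots,z_\ell)$ has no zeros in $V\setminus\{0\}$. Each $z_i$ is homogeneous of degree $d-1\ge1$, so it vanishes at the origin. By the Nullstellensatz, the zero locus of $J_\eta$ is exactly $\{0\}$, and hence $\sqrt{J_\eta}=\mathfrak m$.

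For \ref{lem:resolutionfree:length}, the inclusion $J_\eta\subseteq\mathfrak m$ holds because every $z_i$ is homogeneous of positive degree $d-1$. Therefore $R_\eta\neq0$: it surjects onto $S/\mathfrak m\cong\mathbb C$. It is graded because $J_\eta$ is generated by homogeneous elements. From $\sqrt{J_\eta}=\mathfrak m$ we get $\mathfrak m^N\subseteq J_\eta$ for some $N$. Thus $R_\eta$ is a quotient of $S/\mathfrak m^N$, which is finite-dimensional over $\mathbb C$, so $R_\eta$ has finite length.

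For \ref{lem:resolutionfree:regular}, we use that $J_\eta$ is $\mathfrak m$-primary, so $\operatorname{ht}J_\eta=\ell$. The ideal is generated by $\ell$ homogeneous elements of positive degree. In the graded polynomial ring $S$, which is Cohen--Macaulay, homogeneous elements of positive degree generating an ideal of height equal to their number form a regular sequence in any order. Equivalently, $z_1,\ldots,z_\ell$ is a homogeneous system of parameters of $S$, hence a regular sequence. The one point that needs care is the graded local-to-global passage: regularity is checked in $S_{\mathfrak m}$ and transferred back to $S$ via the standard graded Nakayama-type argument. We will cite this standard fact rather than reprove it.

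For \ref{lem:resolutionfree:koszul}, the Koszul complex on a regular sequence is acyclic, so $H_q(K_\bullet)=0$ for $q>0$ and $H_0(K_\bullet)=S/J_\eta=R_\eta$ (Stacks Tag 062F). Each $K_q$ is free of rank $\binom{\ell}{q}$, and $K_\ell\neq0$, so $K_\bullet$ is a free resolution of $R_\eta$ of length exactly $\ell$.

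None of these steps is a real obstacle. The only subtle point is the standard commutative-algebra fact in \ref{lem:resolutionfree:regular}, that $\ell$ homogeneous generators of an $\mathfrak m$-primary ideal form a regular sequence, which we will cite.
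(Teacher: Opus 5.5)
Your proof is correct, but it runs the logic between (i) and (ii) in the opposite direction from the paper.

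\textbf{The paper's route.} It gets (i) by citing Lemma~2.9 of \cite{abe2019solomon}. It then deduces finite length in (ii) from (i): a regular sequence of length $\ell=\dim S$ forces $\dim R_\eta=0$. Finally it gets (iii) from (i) by acyclicity of the Koszul complex on a regular sequence.

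\textbf{Your route.} You go directly from nondegeneracy of $\eta$ on the flat $V$ to $\sqrt{J_\eta}=\mathfrak m$ via the Nullstellensatz, and read off finite length from $\mathfrak m^N\subseteq J_\eta$. Only then do you obtain (i), from the theorem that a homogeneous system of parameters in the Cohen--Macaulay ring $S$ is a regular sequence. Part (iii) is handled exactly as in the paper.

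\textbf{Comparison.} In effect you supply a proof of the cited lemma rather than invoking it. Your version is self-contained and shows explicitly that only nondegeneracy on the single flat $V$ is used. The paper's version is shorter, but it defers the essential input to the reference: that $J_\eta$ is $\mathfrak m$-primary, or equivalently that the $z_i$ form a regular sequence.
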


\begin{proof}\leavevmode\par
\begin{enumerate}[label=(\roman*)]
    \item This follows from
    \cite[Lemma 2.9]{abe2019solomon}.

    \item Since $d\ge2$, the ideal $J_\eta$ is homogeneous
    and contained in $\mathfrak m$, so $R_\eta$ is
    nonzero and graded. By~\ref{lem:resolutionfree:regular}, $J_\eta$ is generated
    by a regular sequence of length $\ell=\dim S$.
    Hence $R_\eta$ has finite length.

    \item This follows from
    \ref{lem:resolutionfree:regular} and
    \cite[Corollary 19.3]{Eisenbud1995}.\qedhere
\end{enumerate}
\end{proof}

For each $1\le i\le\ell$ and $1\le p\le\ell$,
define an $S$-linear map
\[
    \delta_i:D^p(\mathcal A)\longrightarrow
    \operatorname{Der}^{p-1}S,\quad
    (\delta_i\theta)(f_2,\ldots,f_p)
    :=\theta(x_i,f_2,\ldots,f_p).
\]
For $p=0$, set $\delta_i=0$.
The following lemma shows that these maps
take values in $D^{p-1}(\mathcal A)$
and records their basic properties.

\begin{lemma}\label{decomppartial}
The maps $\delta_i$ satisfy the following properties.
\begin{enumerate}[label=(\roman*)]
    \item\label{lem:delta:preserves}
    We have $\delta_i(D^p(\mathcal A))
    \subseteq D^{p-1}(\mathcal A)$;

    \item\label{lem:delta:anticommute}
    For every $i$, $\partial\delta_i+\delta_i\partial=0$;

    \item\label{lem:delta:decomposition}
    The differential $\partial$ decomposes as
    $\partial=\sum_{i=1}^{\ell}z_i\delta_i$.
\end{enumerate}
\end{lemma}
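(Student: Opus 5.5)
The plan is to check each of the three properties directly from the definitions, working with $\theta\in D^p(\mathcal A)$ viewed as an alternating $S$-multilinear form on $S$ that is built from derivations in each slot. In particular, $\theta(f_1,\dots,f_p)$ is determined by the differentials $\mathrm df_k=\sum_j \partial_{x_j}f_k\,\mathrm dx_j$. This observation drives part~\ref{lem:delta:decomposition} and is used again in the other two parts.

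For~\ref{lem:delta:preserves}, let $\theta\in D^p(\mathcal A)$ and $H\in\mathcal A$, and take $f_3,\dots,f_p\in S$. We must show that $(\delta_i\theta)(\alpha_H,f_3,\dots,f_p)=\theta(x_i,\alpha_H,f_3,\dots,f_p)$ lies in $\alpha_HS$. By alternatingness this equals $-\theta(\alpha_H,x_i,f_3,\dots,f_p)$, which is in $\alpha_HS$ because $\theta$ is logarithmic. For $p=1$ the map $\delta_i\theta=\theta(x_i)$ lands in $S=D^0(\mathcal A)$, so there is nothing to check. The case $p=0$ holds by convention.

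For~\ref{lem:delta:decomposition}, each slot is a derivation, so
\[
    \theta(\eta,f_2,\dots,f_p)=\sum_{i=1}^\ell \partial_{x_i}\eta\,\theta(x_i,f_2,\dots,f_p).
\]
This is the chain rule applied in the first argument, and it uses $S$-linearity in $\mathrm d\eta$. It gives $\partial_p\theta=\sum_i z_i\delta_i\theta$. When $p=0$ both sides vanish.

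For~\ref{lem:delta:anticommute}, compute both composites on $\theta\in D^p(\mathcal A)$ with $p\ge2$:
\[
    (\partial\delta_i\theta)(f_3,\dots)=\theta(x_i,\eta,f_3,\dots),\qquad
    (\delta_i\partial\theta)(f_3,\dots)=\theta(\eta,x_i,f_3,\dots).
\]
These sum to zero by alternatingness. For $p=1$, both $\partial\delta_i\theta$ and $\delta_i\partial\theta$ are zero, since $\partial_0=0$ and $\delta_i=0$ on $D^0$. For $p=0$, both composites are zero for the same reason.

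The only real subtlety is bookkeeping rather than mathematics. The index conventions at $p=0,1$ must match the conventions $\partial_0=0$ and $\delta_i|_{D^0}=0$. The identification of $\operatorname{Der}^pS$ with alternating multiderivations, needed for the chain-rule step, is standard, and I would cite it implicitly.
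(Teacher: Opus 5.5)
Your proposal is correct and follows the paper's proof essentially verbatim: (i) from alternatingness plus the logarithmic condition, (ii) by computing the two composites as $\theta(x_i,\eta,\dots)$ and $\theta(\eta,x_i,\dots)$, and (iii) by the chain rule in the first slot, with the same handling of the edge cases $p=0,1$. One small wording point: $\theta$ is $S$-multilinear in the differentials $\mathrm{d}f_k$, not in the $f_k$ themselves, and that is exactly what your chain-rule step uses.
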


\begin{proof}\leavevmode\par
\begin{enumerate}[label=(\roman*)]
    \item Let $\theta\in D^p(\mathcal A)$ and
    $H\in\mathcal A$.
    For arbitrary $f_3,\ldots,f_p\in S$
    and $p\ge2$, we have
    \[
    \begin{aligned}
        (\delta_i\theta)(\alpha_H,f_3,\ldots,f_p)
        &=\theta(x_i,\alpha_H,f_3,\ldots,f_p)\\
        &=-\theta(\alpha_H,x_i,f_3,\ldots,f_p)
        \in\alpha_HS.
    \end{aligned}
    \]
    Thus $\delta_i\theta\in D^{p-1}(\mathcal A)$.
    The cases $p=0,1$ follow directly
    from the definitions.

    \item For $p\ge2$, compute
    \[
    \begin{aligned}
        (\partial\delta_i\theta)(f_3,\ldots,f_p)
        &=\theta(x_i,\eta,f_3,\ldots,f_p),\\
        (\delta_i\partial\theta)(f_3,\ldots,f_p)
        &=\theta(\eta,x_i,f_3,\ldots,f_p).
    \end{aligned}
    \]
    Hence $\partial\delta_i+\delta_i\partial=0$
    for $p\ge2$.
    The cases $p=0,1$ follow directly
    from the definitions.

    \item For $p=0$, the identity holds by definition. For $p\ge1$, the chain rule gives
    \[
        (\partial\theta)(f_2,\ldots,f_p)
        =\sum_{i=1}^\ell z_i(\delta_i\theta)(f_2,\ldots,f_p).\qedhere
    \]
\end{enumerate}
\end{proof}

By the definition of the Koszul differential,
$\dd_K(e_i)=z_i$. Hence, for $a\in K_q$,
\begin{equation}\label{eq:koszul-wedge}
    \dd_K(e_i\wedge a)=z_i a-e_i\wedge\dd_K a.
\end{equation}
Consider the total complex
\[
    T_n:=\bigoplus_{p+q=n}K_q\otimes_S D^p(\mathcal A).
\]
For $a\in K_q$ and $c\in D^p(\mathcal A)$, define
\[
    \dd_0(a\otimes c):=\dd_K a\otimes c, \qquad
    \dd_1(a\otimes c):=(-1)^q a\otimes\partial c.
\]
Then $\dd:=\dd_0+\dd_1$ satisfies $\dd^2=0$ and
defines the total differential on
$T_\bullet=\operatorname{Tot}
(K_\bullet\otimes_S D^*(\mathcal A))$.

\begin{proposition}
\label{prop:chainisom}
    There is an $S$-linear chain isomorphism
    \[
        (T_\bullet,\dd_0+\dd_1)\cong(T_\bullet,\dd_0).
    \]
\end{proposition}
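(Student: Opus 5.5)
We construct the isomorphism explicitly as an exponential of a nilpotent operator built from the maps $\delta_i$ of Lemma~\ref{decomppartial}. On $T_\bullet$, define the $S$-linear operator
\[
    h(a\otimes c):=\sum_{i=1}^{\ell}(-1)^{q}\,(e_i\wedge a)\otimes\delta_i c,
    \qquad a\in K_q,\ c\in D^p(\mathcal A),
\]
with the sign chosen to match the sign convention in $\dd_1$. The map $h$ raises the Koszul degree $q$ by one and lowers $p$ by one, so it preserves total degree. It is well defined on $K_q\otimes_S D^p(\mathcal A)$ because $\delta_i$ maps $D^p(\mathcal A)$ into $D^{p-1}(\mathcal A)$ by Lemma~\ref{decomppartial}\ref{lem:delta:preserves}. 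Since $h$ strictly increases $q$, it is nilpotent ($h^{\ell+1}=0$). Hence $\Phi:=\exp(h)=\sum_{k\ge0}h^k/k!$ is a finite sum and an $S$-linear automorphism of each $T_n$, with inverse $\exp(-h)$. The goal is to show $\Phi\circ\dd_0=(\dd_0+\dd_1)\circ\Phi$, or the same identity with $\Phi^{-1}$, depending on which sign choice works out.

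The key computation is the commutator $[\dd_0,h]=\dd_0h-h\dd_0$, carried out using \eqref{eq:koszul-wedge}. The term $z_i a$ produced by $\dd_K(e_i\wedge a)$ contributes
\[
    \sum_i (-1)^q a\otimes z_i\delta_i c=(-1)^q a\otimes\partial c=\dd_1(a\otimes c)
\]
by Lemma~\ref{decomppartial}\ref{lem:delta:decomposition}. The remaining term $-e_i\wedge\dd_K a$ should cancel against $h\dd_0$, since $\dd_K a$ lies in $K_{q-1}$ and the sign changes accordingly. So I expect $[\dd_0,h]=\pm\dd_1$, possibly after adjusting the sign in $h$.

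Next, I would check that $\dd_1$ commutes with $h$. Up to sign, $[h,\dd_1]$ involves $e_i\wedge a\otimes(\delta_i\partial+\partial\delta_i)c$, which vanishes by Lemma~\ref{decomppartial}\ref{lem:delta:anticommute}. One must also account for the sign $(-1)^q$ shift, which produces a graded commutator. Given $[\dd_0,h]=\dd_1$ and $[\dd_1,h]=0$, the standard identity
\[
    e^{-h}\,\dd_0\,e^{h}=\dd_0+[\dd_0,h]+\tfrac12[[\dd_0,h],h]+\cdots
\]
truncates after the linear term, giving $e^{-h}\dd_0e^{h}=\dd_0+\dd_1$. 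Then $e^{h}:(T_\bullet,\dd_0+\dd_1)\to(T_\bullet,\dd_0)$ is a chain isomorphism.

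The main obstacle is sign bookkeeping. The Koszul sign in \eqref{eq:koszul-wedge}, the $(-1)^q$ in $\dd_1$, and the anticommutation $\partial\delta_i=-\delta_i\partial$ all have to align so that $[\dd_0,h]$ is exactly $\dd_1$ (not $-\dd_1$) and $h$ graded-commutes with $\dd_1$. I would first try $h$ without the $(-1)^q$ sign and test it on low degrees ($q=0$, $p=1$). If the signs fail, I would replace $h$ by $-h$ or insert $(-1)^{q}$ or $(-1)^{p}$ until both identities hold. Since only first-order commutators appear, one consistent choice should exist.
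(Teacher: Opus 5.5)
Your proposal is correct and is essentially the paper's proof: your $h$ is exactly the paper's $E=\sum_i E_i$. With the $(-1)^q$ sign as you wrote it, one gets exactly $[\dd_0,h]=\dd_1$ and the ordinary (not graded) commutator $[\dd_1,h]=0$, so $\dd_0\exp(h)=\exp(h)(\dd_0+\dd_1)$ with no sign adjustment needed; dropping the $(-1)^q$ would break the first identity, so your fallback of "first trying $h$ without the sign" would fail. The paper checks the conjugation identity by differentiating $F(t)=\exp(-tE)\dd_0\exp(tE)$ rather than by your adjoint series, which is the same argument.
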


\begin{proof}   
    For $1\le i\le\ell$, define an $S$-linear
    endomorphism $E_i:T_\bullet\to T_\bullet$ by
    \[
        E_i(a\otimes c)
        :=(-1)^q(e_i\wedge a)\otimes\delta_i c,
        \qquad a\in K_q,\ c\in D^p(\mathcal A).
    \]
    Set $E:=\sum_{i=1}^\ell E_i$. Since $E$
    increases the Koszul degree by one and
    $K_q=0$ for $q>\ell$, we have $E^{\ell+1}=0$.
    Thus
    \[
        \Phi:=\exp(E)=\sum_{r=0}^\ell\frac{E^r}{r!}
    \]
    is an $S$-linear automorphism of $T_\bullet$
    with inverse $\exp(-E)$.
    
    Now we compute $[\dd_0,E]$ and $[\dd_1,E]$,
    where $[A,B]:=AB-BA$.    
    For $a\in K_q$ and $c\in D^p(\mathcal A)$,
    using~\eqref{eq:koszul-wedge}, we obtain
    \[
        [\dd_0,E_i](a\otimes c)
        =(-1)^q\bigl(\dd_K(e_i\wedge a)
        +e_i\wedge\dd_K a\bigr)\otimes\delta_i c
        =(-1)^q z_i a\otimes\delta_i c.
    \]
    By Lemma~\ref{decomppartial}\ref{lem:delta:decomposition},
    we obtain $[\dd_0,E]=\dd_1$.
    Similarly,
    \[
        [\dd_1,E_i](a\otimes c)
        =-e_i\wedge a\otimes
        (\partial\delta_i+\delta_i\partial)c=0
    \]
    by Lemma~\ref{decomppartial}\ref{lem:delta:anticommute}.
    Hence $[\dd_1,E]=0$.
    
    Since $E$ is nilpotent,
    $F(t):=\exp(-tE)\dd_0\exp(tE)$ has only finitely many terms upon expansion. Using the commutator identities above,
    we obtain
    \[
    F'(t)
    =\exp(-tE)[\dd_0,E]\exp(tE)
    =\exp(-tE)\dd_1\exp(tE)=\dd_1.
    \]
    Since $F(0)=\dd_0$ and
    $F(1)=\Phi^{-1}\dd_0\Phi$, it follows that
    $\dd_0\Phi=\Phi(\dd_0+\dd_1)$.
    Thus $\Phi$ is the desired chain isomorphism.
\end{proof}

\begin{proposition}\label{prop:decomptor}
Assume that $\mathcal A$ is ST-finite with respect to
$\eta$. Then, for every $n\ge0$, there is an isomorphism
of $S$-modules
\[
    \mathrm{Tor}_n^S(R_\eta,\mathrm{ST}(\mathcal A,\eta))
    \cong
    \bigoplus_{p+q=n}
    \mathrm{Tor}_q^S(R_\eta,D^p(\mathcal A)).
\]
\end{proposition}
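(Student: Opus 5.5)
We compute the homology of the total complex $T_\bullet=\operatorname{Tot}(K_\bullet\otimes_S D^*(\mathcal A))$ in two ways, once for each differential related by Proposition~\ref{prop:chainisom}, and compare the answers.

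\emph{Step 1: homology with $\dd_0+\dd_1$.} Filter $T_\bullet$ by Koszul degree, or equivalently run the spectral sequence that first takes homology in the $D^*$-direction. Each $K_q$ is a free $S$-module, so $K_q\otimes_S-$ is exact and the $\dd_1$-homology of the column $K_q\otimes_S D^*(\mathcal A)$ is $K_q\otimes_S H_p(D^*(\mathcal A),\eta)$. By ST-finiteness, only $p=0$ survives, giving $K_q\otimes_S\mathrm{ST}(\mathcal A,\eta)$. The spectral sequence is then concentrated in a single row, so it degenerates. Hence
\[
H_n(T_\bullet,\dd_0+\dd_1)\cong H_n(K_\bullet\otimes_S\mathrm{ST}(\mathcal A,\eta))\cong\mathrm{Tor}_n^S(R_\eta,\mathrm{ST}(\mathcal A,\eta)),
\]
where the last isomorphism uses Lemma~\ref{resolutionfree}\ref{lem:resolutionfree:koszul}. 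Alternatively, one can argue directly: the augmentation $D^*(\mathcal A)\to\mathrm{ST}(\mathcal A,\eta)$, viewed as a map of complexes with $\mathrm{ST}$ in degree $0$, is a quasi-isomorphism by hypothesis. Tensoring with the bounded complex of free modules $K_\bullet$ preserves quasi-isomorphisms, by the standard argument for bounded complexes via a mapping cone and the same column filtration.

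\emph{Step 2: homology with $\dd_0$ alone.} Here the total differential does not mix different $p$. Thus $(T_\bullet,\dd_0)$ splits as the direct sum over $p$ of the complexes $K_\bullet\otimes_S D^p(\mathcal A)$, shifted by $p$, with only the Koszul differential acting (up to sign conventions). Its $n$-th homology is therefore
\[
\bigoplus_{p+q=n}H_q(K_\bullet\otimes_S D^p(\mathcal A))\cong\bigoplus_{p+q=n}\mathrm{Tor}_q^S(R_\eta,D^p(\mathcal A)),
\]
again using that $K_\bullet$ is a free resolution of $R_\eta$.

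\emph{Step 3: conclusion.} Proposition~\ref{prop:chainisom} supplies an $S$-linear chain isomorphism between the two differentials. It therefore induces $S$-module isomorphisms on homology, and combining Steps 1 and 2 gives the claimed decomposition. The only point needing care is Step 1, namely justifying convergence and degeneration of the spectral sequence. This is unproblematic because the double complex is bounded ($0\le p,q\le\ell$), so I expect no real obstacle.
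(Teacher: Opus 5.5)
Your proposal is correct and follows essentially the same route as the paper. The paper computes $H_n(T_\bullet,\dd_0+\dd_1)$ via your ``alternative'' argument (the augmentation $D^*(\mathcal A)\to\mathrm{ST}(\mathcal A,\eta)$ is a quasi-isomorphism, and tensoring with the bounded degreewise free complex $K_\bullet$ preserves it). It then splits $H_n(T_\bullet,\dd_0)$ over $p$ and links the two computations by Proposition~\ref{prop:chainisom}, exactly as in your Steps 2 and 3; your column-filtration spectral sequence is an equally valid way to justify Step 1.
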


\begin{proof}
Let $L_\bullet$ be the chain complex concentrated
in degree zero with
$L_0=\mathrm{ST}(\mathcal A,\eta)$.
By ST-finiteness, the natural chain map
$D^*(\mathcal A)\to L_\bullet$ is a quasi-isomorphism.

Since $K_\bullet$ is bounded and degreewise free,
it is K-flat \cite[Tag~064K]{stacks-project}.
Hence tensoring $D^*(\mathcal A)\to L_\bullet$
with $K_\bullet$ and taking total complexes
gives a quasi-isomorphism
\cite[Tag~06Y0]{stacks-project}.
By Lemma~\ref{resolutionfree}\ref{lem:resolutionfree:koszul},
we obtain
\[
    H_n(T_\bullet,\dd_0+\dd_1)
    \cong H_n(K_\bullet\otimes_S
    \mathrm{ST}(\mathcal A,\eta))
    \cong \mathrm{Tor}_n^S
    (R_\eta,\mathrm{ST}(\mathcal A,\eta)).
\]

By Proposition~\ref{prop:chainisom}, we have
$H_n(T_\bullet,\dd_0+\dd_1)
\cong H_n(T_\bullet,\dd_0)$.
Since $\dd_0$ preserves the index $p$,
\[
    H_n(T_\bullet,\dd_0)
    \cong\bigoplus_{p+q=n}
    H_q(K_\bullet\otimes_S D^p(\mathcal A))
    \cong\bigoplus_{p+q=n}
    \mathrm{Tor}_q^S(R_\eta,D^p(\mathcal A)).
\]
Combining these isomorphisms proves the assertion.
\end{proof}

The following lemma gives a description of the projective dimension.

\begin{lemma}\label{tordetector}
Let $N$ be a nonzero graded $S$-module of finite
length and $M$ a nonzero finitely generated graded
$S$-module. Then
\[
    \sup\{q\ge0\mid\operatorname{Tor}_q^S(N,M)\neq0\}
    =\operatorname{pd}_S M.
\]
\end{lemma}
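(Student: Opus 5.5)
Take a minimal graded free resolution $F_\bullet\to M$ of length $n:=\operatorname{pd}_S M$, which exists by Hilbert's syzygy theorem since $M$ is finitely generated and graded. Then $\operatorname{Tor}_q^S(N,M)=H_q(N\otimes_S F_\bullet)$, and this vanishes for $q>n$ because $F_q=0$ there. So the supremum is at most $n$, and it remains to show $\operatorname{Tor}_n^S(N,M)\neq0$. This also covers $n=0$: there $\operatorname{Tor}_0=N\otimes_S M$, which is nonzero by the argument below with $F_0=M$.

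For the nonvanishing, write $F_n=\bigoplus_j S(-a_j)$, which is nonzero. Since $F_{n+1}=0$,
\[
    \operatorname{Tor}_n^S(N,M)=\ker\bigl(N\otimes_S F_n\to N\otimes_S F_{n-1}\bigr).
\]
By minimality, the differential $F_n\to F_{n-1}$ is given by a matrix with entries in $\mathfrak m=(x_1,\ldots,x_\ell)$. The key input is that $N$, being nonzero graded of finite length, has a nonzero socle: the last nonzero graded piece $N_e$ with $e=\operatorname{end}N$ (finite since $N$ has finite length) is killed by $\mathfrak m$, because multiplication by $x_i$ lands in $N_{e+1}=0$. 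Pick $0\neq s\in N_e$. Then $s\otimes f_1$, where $f_1$ is the first basis element of $F_n$, is a nonzero element of $N\otimes_S F_n\cong\bigoplus_j N(-a_j)$, and it maps to $\sum_k s\cdot m_{k1}\otimes g_k$ with $m_{k1}\in\mathfrak m$. Each $s\cdot m_{k1}=0$, so this element lies in the kernel. Hence $\operatorname{Tor}_n^S(N,M)\neq0$.

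The only point needing care is the existence of minimal graded resolutions over the standard graded polynomial ring, which is standard via the graded Nakayama lemma, and that finite length guarantees a socle element annihilated by all of $\mathfrak m$. Both are routine, so I expect no real obstacle.
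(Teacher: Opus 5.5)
Your proposal is correct and follows essentially the same route as the paper: take a minimal graded free resolution of length $\operatorname{pd}_S M$, pick a nonzero element of $N$ annihilated by $\mathfrak m$, and observe that it tensored with a basis element of the last free module is a nonzero cycle. The only cosmetic difference is how you find that element: you take it in the top graded piece $N_{\operatorname{end}N}$, which is slightly more direct, whereas the paper obtains it from the stabilization of the chain $\mathfrak m^r N$ together with graded Nakayama.
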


\begin{proof}
    By Hilbert's syzygy theorem, $M$ has finite
    projective dimension over $S$.
    Set $t:=\operatorname{pd}_S M$ and choose a minimal
    graded free resolution $F_\bullet$ of $M$
    of length $t$, with $F_t\neq0$.
    For $t>0$, it has the form
    \[
        0\longrightarrow F_t\xrightarrow{f_t}
        F_{t-1}\longrightarrow\cdots
        \longrightarrow F_0\longrightarrow M
        \longrightarrow0.
    \]
    By minimality, $f_j(F_j)\subseteq\mathfrak m F_{j-1}$
    for all $j\ge1$, where
    $\mathfrak m=(x_1,\ldots,x_\ell)$.

    Since $N$ is nonzero and of finite length,
    it is Artinian. Thus the descending chain
    \[
        N\supseteq\mathfrak mN
        \supseteq\mathfrak m^2N\supseteq\cdots
    \]
    stabilizes. By graded Nakayama's lemma,
    the stable term must be zero.
    Hence there exists $r\ge0$ such that
    $\mathfrak m^rN\neq0$ and
    $\mathfrak m^{r+1}N=0$.
    Choose $0\neq u\in\mathfrak m^rN$.
    Then $\mathfrak m u=0$.

    If $t=0$, then $M$ is a nonzero free
    $S$-module. Hence
    $\operatorname{Tor}_0^S(N,M)=N\otimes_S M\neq0$
    and $\operatorname{Tor}_i^S(N,M)=0$
    for all $i>0$.
    If $t>0$, choose a basis element $e\in F_t$.
    By minimality, $f_t(e)$ is an $S$-linear
    combination of basis elements of $F_{t-1}$
    with coefficients in $\mathfrak m$.
    Since $\mathfrak m u=0$, we have
    \[
        (f_t\otimes 1_N)(e\otimes u)=0.
    \]
    The element $e\otimes u$ is nonzero in
    $F_t\otimes_S N$. Since $F_{t+1}=0$,
    it represents a nonzero homology class.
    Thus
    \[
        0\neq[e\otimes u]\in
        H_t(F_\bullet\otimes_S N)
        =\operatorname{Tor}_t^S(M,N)
        \cong\operatorname{Tor}_t^S(N,M).
    \]
    Since $F_\bullet$ has length $t$,
    $\operatorname{Tor}_i^S(N,M)=0$ for all $i>t$.
\end{proof}

We now complete the proof of Theorem~\ref{thm:main3}.

\begin{proof}[Proof of Theorem~\ref{thm:main3}]
By Proposition~\ref{prop-vanish}, it suffices to
show that ST-finiteness implies tameness.
Assume that $\mathcal A$ is ST-finite with respect
to $\eta$, and fix $1\le p\le\ell$.
Recall that $D^p(\mathcal A)$ is a nonzero
finitely generated graded $S$-module.

For every $q>\ell-p$, we have $p+q>\ell$.
Since $K_\bullet$ is a free resolution of $R_\eta$
of length $\ell$, we have
$\mathrm{Tor}_{p+q}^S(R_\eta,\mathrm{ST}(\mathcal A,\eta))=0$.
By Proposition~\ref{prop:decomptor},
\[
    \bigoplus_{a+b=p+q}
    \mathrm{Tor}_b^S(R_\eta,D^a(\mathcal A))=0.
\]
Hence $\mathrm{Tor}_q^S(R_\eta,D^p(\mathcal A))=0$
for all $q>\ell-p$.

By Lemma~\ref{resolutionfree}\ref{lem:resolutionfree:length},
$R_\eta$ is a nonzero finite-length graded
$S$-module. 
Combining this with the fact that
$D^p(\mathcal A)$ is a nonzero finitely generated
graded $S$-module, we obtain from
Lemma~\ref{tordetector} that
\[
    \operatorname{pd}_S D^p(\mathcal A)\le\ell-p.
\]
Since this holds for every $1\le p\le\ell$,
the arrangement $\mathcal A$ is tame.
\end{proof}

\bibliographystyle{amsplain}
\bibliography{hyperplane}

\end{document}